\newtheorem{theorem}{Theorem}[section]
\theoremstyle{plain}
\newtheorem*{theorem*}{Theorem}
\theoremstyle{plain}
\newtheorem{definition}{Definition}[section]
\newtheorem{lemma}{Lemma}[section]
\newtheorem{prop}{Proposition}[section]
\newtheorem{remark}{Remark}[section]
\numberwithin{equation}{section}
\DeclareRobustCommand{\rchi}{{\mathpalette\irchi\relax}}
\newcommand{\irchi}[2]{\raisebox{\depth}{$#1\chi$}}
\begin{document}
\title[]{Semilinear fractional elliptic problems with mixed Dirichlet-Neumann boundary conditions}

\author{J. Carmona}
\curraddr[J.~Carmona]{Departamento de Matem\'aticas,
Universidad de Almer{\'{\i}}a,\newline%
\indent Ctra. Sacramento s/n, La Ca\~nada de San Urbano, 04120 Almer\'{\i}a,   Spain}%
\email[J.~Carmona]{jcarmona@ual.es}%
\author{E. Colorado}
\curraddr[E.~Colorado]{Departamento de Matem\'aticas, Universidad Carlos III de Madrid\newline%
\indent Avenida de la Universidad 30, 28911 Legan\'es (Madrid), Spain}%
\email[E.~Colorado]{ecolorad@math.uc3m.es}%
\author{T.~Leonori}
\curraddr[T.~Leonori]{Dipartimento di Scienze di Base e Applicate per l'Ingegneria
Universit\`a di \newline%
\indent Roma \lq\lq Sapienza\rq\rq. Via Antonio Scarpa 10,
00161 Roma, Italy}%
\email[T.~Leonori]{tommaso.leonori@sbai.uniroma1.it}%
\author{A.~Ortega}
\curraddr[A.~Ortega]{Departamento de Matem\'aticas, Universidad Carlos III de Madrid\newline
\indent Avenida de la Universidad 30, 28911 Legan\'es (Madrid), Spain}%
\email[A.~Ortega]{alortega@math.uc3m.es
}%

\date{\today}
\subjclass[2010]{35J25, 35J61, 35J20} %
\keywords{Fractional Laplacian, Mixed Boundary Conditions, Concave-Convex Problem}%
\thanks{E. Colorado and A. Ortega are partially supported
by the Ministry of Economy and Competitiveness of Spain and FEDER
under grant number MTM2016-80618-P, and  J. Carmona is partially
supported by Ministerio de Econom\'ia y Competitividad
(MINECO-FEDER), Spain under grant number MTM2015-68210-P and Junta
de Andaluc\'{\i}a under grant number FQM-194.}

\begin{abstract}
We study a nonlinear elliptic boundary value problem defined on a
smooth bounded domain involving the fractional Laplace operator, a
concave-convex powers term together with mixed Dirichlet-Neumann
boundary conditions.
\end{abstract}
\maketitle
\section{Introduction}
We study a nonlinear elliptic problem involving the fractional Laplace operator and a concave-convex power
term together with   mixed Dirichlet-Neumann boundary conditions. Namely,
\begin{equation}\label{problema_abajo}
\left\{\begin{array}{rcll}
(-\Delta)^su & \!\!\!= &\!\!\!\lambda u^q+u^r \quad& \mbox{in } \Omega,
\\
u& \!\!\!> & \!\!\!0&\mbox{in }  \Omega,
\\
u & \!\!\!= & \!\!\!0&\mbox{on } \Sigma_{\mathcal{D}},\\
\displaystyle\frac{\partial u}{\partial \nu} &\!\!\! = &\!\!\!0&\mbox{on } \Sigma_{\mathcal{N}},
\end{array}
\right.
\tag{$P_\lambda$}
\end{equation}
where $\Omega\subset \mathbb{R}^{N}$ is a bounded domain with
smooth boundary, $N>2s$, $(-\Delta)^s$, with $\frac 12<s<1$,
denotes the spectral fractional Laplace operator, $\lambda >0$ is
a real parameter and $0<q\le 1<r<\frac{N+2s}{N-2s}$. In order to
simplify the notation we denote the mixed boundary conditions as
\begin{equation}\label{concidiones}
B(u)=u\rchi_{\Sigma_{\mathcal{D}}}+\frac{\partial u}{\partial
\nu}\rchi_{\Sigma_{\mathcal{N}}},
\end{equation}
where $\chi_A$ stands for the characteristic function of a set $A$ and we assume that the boundary manifolds $\Sigma_{\mathcal{D}}$ and $\Sigma_{\mathcal{N}}$ are such that

\begin{equation*}
        (\mathfrak{B})\ \left\{
        {\renewcommand{\arraystretch}{1.2}\begin{tabular}{l}
       $\Sigma_{\mathcal{D}}$ and $\Sigma_{\mathcal{N}}$ are smooth $(N-1)$-dimensional submanifolds of $\partial\Omega$.\\
       $\Sigma_{\mathcal{D}}$ is a closed manifold of positive $(N-1)$-dimensional Lebesgue measure,\\
                 $\displaystyle\mkern+200mu|\Sigma_{\mathcal{D}}|=\alpha\in(0,|\partial\Omega|)$.\\
             $\Sigma_{\mathcal{D}}\cap\Sigma_{\mathcal{N}}=\emptyset\,,\ \Sigma_{\mathcal{D}}\cup\Sigma_{\mathcal{N}}=\partial\Omega\mbox{  and  }\Sigma_{\mathcal{D}}\cap\overline{\Sigma}_{\mathcal{N}}=\Gamma\,$ where $\Gamma$ is a smooth\\
            $(N-2)$-dimensional submanifold of $\partial\Omega$.
        \end{tabular}}
        \right.
\end{equation*}

\

Problems like \eqref{problema_abajo} have been studied in the last decades:
with the classical Laplace operator and Dirichlet boundary
condition, c.f. \cite{Lions} or \cite{ABC} for a deep study; with the Laplace operator and
mixed Dirichlet-Neumann boundary conditions, c.f.
\cite{ACP0,ACP,ColP}; with the $p$-Laplace operator, c.f.
\cite{BEP,AP,GaMP}; with fully nonlinear operators, c.f.
\cite{ChaCP}; and more recently with the fractional Laplace operator
and Dirichlet boundary conditions, c.f. \cite{BaCdPS,BCSS,BCdPS}.
Up to our knowledge, this is the first work where the concave-convex problem is analyzed with the spectral fractional Laplace operator associated with mixed
Dirichlet-Neumann boundary conditions.\newline
The main result proven in this work is the following.
\begin{theorem}\label{Thuno}
Assume that $\frac12<s<1$, $N>2s$ and $0< q \le 1<r<\frac{N+2s}{N-2s}$. Then
\begin{enumerate}
\item If $q=1$ there exists at least one solution to $(P_\lambda)$
for every $0<\lambda<\lambda_{1}^s$, where $\lambda_1^s$ denotes
the first eigenvalue of the spectral fractional Laplacian with the
boundary conditions \eqref{concidiones}, while there is no
solution for $\lambda \geq \lambda_1^s$.
Even more, there is a branch of solutions to $(P_\lambda)$
bifurcating from $(\lambda, u)=(\lambda_1^s,0)$, which cuts the
axis $\{\lambda =0\}$. \item If $0<q<1$ there exists
$0<\Lambda<\infty$ such that:
\begin{enumerate}
\item For $0<\lambda<\Lambda$ there is a minimal solution to \eqref{problema_abajo}. Moreover, the family of minimal solutions is increasing with respect
to $\lambda$.
\item For $\lambda=\Lambda$ there is at least one solution to \eqref{problema_abajo}.
\item For $\lambda>\Lambda$ there is no solution to \eqref{problema_abajo}.
\item Problem \eqref{problema_abajo} admits at least two solutions for every $0<\lambda<\Lambda$.
\end{enumerate}
\end{enumerate}
\end{theorem}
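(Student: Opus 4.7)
The natural strategy is to recast \eqref{problema_abajo} as a local problem through the Caffarelli--Silvestre type extension: letting $\mathcal{C}_\Omega=\Omega\times(0,\infty)$ with weight $y^{1-2s}$, a function $u$ solves \eqref{problema_abajo} iff its extension $w$ satisfies a degenerate elliptic equation in $\mathcal{C}_\Omega$ with vanishing trace on $\Sigma_{\mathcal D}\times(0,\infty)$, Neumann condition on $\Sigma_{\mathcal N}\times(0,\infty)$, and a nonlinear Neumann condition $-\kappa_s\lim_{y\to0^+}y^{1-2s}w_y=\lambda u^q+u^r$ on $\Omega\times\{0\}$. Working in the Hilbert space $\mathcal{X}^s_{\Sigma_{\mathcal D}}$ of extensions with trace zero on $\Sigma_{\mathcal D}$, the associated energy functional is
\[
J_\lambda(w)=\frac{\kappa_s}{2}\int_{\mathcal{C}_\Omega}y^{1-2s}|\nabla w|^2\,dxdy-\frac{\lambda}{q+1}\int_\Omega |w|^{q+1}\,dx-\frac{1}{r+1}\int_\Omega |w|^{r+1}\,dx,
\]
and the subcritical assumption $r<\tfrac{N+2s}{N-2s}$ gives the compactness of the trace embedding $\mathcal{X}^s_{\Sigma_{\mathcal D}}\hookrightarrow L^{r+1}(\Omega)$, which is the workhorse for every Palais--Smale argument below.

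For part (1) ($q=1$), non-existence when $\lambda\ge\lambda_1^s$ follows by testing the equation with the first (positive) mixed-eigenfunction $\varphi_1$ of $(-\Delta)^s$: integration by parts yields $(\lambda_1^s-\lambda)\int u\varphi_1=\int u^r\varphi_1>0$, a contradiction. For $0<\lambda<\lambda_1^s$, the functional $J_\lambda$ has mountain pass geometry (coercivity of the quadratic part on the Rayleigh quotient below $\lambda_1^s$, combined with the superlinear growth of $u^{r+1}$), and the PS condition holds at every level thanks to subcriticality; the resulting critical point is shown to be positive by replacing $u$ with $u_+$ in the truncated functional. Concerning the continuum, I would apply Rabinowitz' global bifurcation theorem to the compact operator $K_\lambda(u)=(-\Delta)^{-s}_{\mathcal B}(\lambda u+u^r)$: since $\lambda_1^s$ is simple, an unbounded continuum $\mathcal C$ of positive solutions bifurcates from $(\lambda_1^s,0)$. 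A priori $L^\infty$-bounds from a Gidas--Spruck type blow-up (using the subcritical growth, boundary flattening near $\Gamma$, and the classification of entire solutions in $\mathbb R^N$ and on the half-space) show that $\mathcal C$ stays bounded in $u$ on every compact $\lambda$-interval, while testing with $\varphi_1$ also forces $\lambda\le\lambda_1^s$ along $\mathcal C$; unboundedness then implies $\mathcal C$ must cross $\{\lambda=0\}$.

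For part (2) ($0<q<1$), set $\Lambda=\sup\{\lambda>0:\eqref{problema_abajo}\text{ admits a solution}\}$. Positivity of $\Lambda$ comes from a sub/super-solution scheme: for small $\lambda$ a suitable multiple of the solution of the linear mixed problem with right-hand side $1$ is a super-solution, while $\varepsilon\varphi_1$ is a sub-solution for $\varepsilon$ small, giving a minimal solution $u_\lambda$ via monotone iteration. Monotonicity in $\lambda$ is automatic from the iteration started at $0$. Finiteness of $\Lambda$ is obtained by choosing $c>0$ so that $\lambda t^q+t^r\ge ct$ for all $t\ge0$ with $c>\lambda_1^s$ when $\lambda$ is large, and then testing with $\varphi_1$. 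To get a solution at $\lambda=\Lambda$ I would pass to the limit in a sequence of minimal solutions; these are uniformly bounded in $\mathcal{X}^s_{\Sigma_{\mathcal D}}$ (test the equation with $u_\lambda$ and combine with the super-solution bound), so they converge weakly to a solution, which is strong by the subcritical compactness. Minimality and standard arguments upgrade part (a)--(c).

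The genuinely delicate point is the multiplicity statement (d). Writing $u=u_\lambda+v$ with $v\ge0$, the problem for $v$ becomes
\[
(-\Delta)^sv=g_\lambda(x,v):=\lambda[(u_\lambda+v)^q-u_\lambda^q]+(u_\lambda+v)^r-u_\lambda^r,\qquad B(v)=0,
\]
with $g_\lambda(x,0)=0$ and $g_\lambda(x,v)$ superlinear at infinity and sublinear near $0$ (because $q<1$ kills the linear term at $v=0^+$ and $r>1$ dominates for large $v$). One verifies that the translated functional has mountain pass geometry near $v=0$ (using $g_\lambda(x,v)\le C v^{r}$ and the spectral bound for small $v$) and at infinity (using the Ambrosetti--Rabinowitz type inequality inherited from $r>1$). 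The main obstacle is verifying the PS condition: one must handle both the sublinear perturbation and the lack of a clean homogeneity, but the strict subcriticality $r<\frac{N+2s}{N-2s}$ ensures compactness of $v_n\rightharpoonup v$ in $L^{r+1}(\Omega)$ and hence strong convergence. A mountain-pass critical point $v_\lambda\not\equiv0$ is produced, $v_\lambda\ge0$ by the truncation $g_\lambda(x,v_-)=0$, and $u_\lambda+v_\lambda$ is a second solution of $(P_\lambda)$ distinct from $u_\lambda$.
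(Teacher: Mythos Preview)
Your outline for part~(1) and for (2a), (2c) matches the paper's approach. The real gap lies in (2b) and especially (2d), and both stem from one missing ingredient: the \emph{semi-stability} of the minimal solution, namely that the first eigenvalue of the linearised operator $(-\Delta)^s - a_\lambda(x)$, with $a_\lambda = \lambda q\, u_\lambda^{q-1} + r\, u_\lambda^{r-1}$, is nonnegative. The paper proves this (Proposition~5.1) by the Ambrosetti--Brezis--Cerami comparison argument and uses it in both places.

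In (2d) your description of $g_\lambda(x,v)$ near $v=0$ is incorrect: differentiating gives $\partial_v g_\lambda(x,0)=a_\lambda(x)>0$, so $g_\lambda$ is \emph{linear} (not sublinear) at the origin, and the bound $g_\lambda(x,v)\le Cv^r$ fails for small $v$. The claim that ``$q<1$ kills the linear term at $v=0^+$'' is simply false; in fact $u_\lambda^{q-1}$ is large where $u_\lambda$ is small. Consequently mountain-pass geometry at $v=0$ is not automatic: showing that $v=0$ is a local minimum of the translated functional amounts precisely to the inequality $\int_\Omega|(-\Delta)^{s/2}\phi|^2\,dx \ge \int_\Omega a_\lambda\phi^2\,dx$ (up to higher order), i.e.\ semi-stability. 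Your vague ``spectral bound'' may be gesturing at this, but it contradicts the preceding sentence and is never established. The paper obtains the second solution by first proving semi-stability and then invoking the argument of \cite{BCdPS,AL}.

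In (2b) your justification for the uniform bound---``test with $u_\lambda$ and combine with the super-solution bound''---does not work as $\lambda\nearrow\Lambda$, since the explicit super-solution $Mg$ was only constructed for small $\lambda$. Here again the paper uses semi-stability: combining the Euler--Lagrange identity $\|u_n\|_{H^s}^2 = \lambda_n\!\int u_n^{q+1}+\int u_n^{r+1}$ with the stability inequality $\|u_n\|_{H^s}^2 \ge \lambda_n q\!\int u_n^{q+1}+r\!\int u_n^{r+1}$ (take $\phi=u_n$) and using $r>1>q$ yields a uniform bound on $\int u_n^{r+1}$, hence on $\|u_n\|_{H^s}$, and one passes to the limit.
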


The following result deals with the sub-linear case $0<q<1$ and it provides a uniform $L^{\infty}(\Omega)$-bound for all the solutions to problems \eqref{problema_abajo} for any $0<\lambda\le\Lambda$.

\begin{theorem}\label{acotacion}
Assume that $\frac12<s<1$, $N>2s$, $0<q<1<r<\frac{N+2s}{N-2s}$. Then, there exists a constant $C=C(N,s,
\Omega, r,q)>0$ such that
\begin{equation*}
\sup\limits_{x\in\Omega} u_{\lambda}(x)\leq C,
\end{equation*}
for any solution $u_{\lambda}$ to
problems $(P_{\lambda})$  with $\lambda\in
[0,\Lambda]$, and $\Lambda$ defined in Theorem \ref{Thuno}.
\end{theorem}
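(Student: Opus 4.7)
My approach would be a Gidas--Spruck style blow-up argument combined with Liouville theorems for the fractional Laplacian. Assume by contradiction that there exist sequences $\lambda_n\in[0,\Lambda]$ and solutions $u_n:=u_{\lambda_n}$ of $(P_{\lambda_n})$ with $M_n:=\|u_n\|_{L^\infty(\Omega)}=u_n(x_n)\to\infty$ for some $x_n\in\overline{\Omega}$. Since $u_n\equiv0$ on $\Sigma_{\mathcal{D}}$ we have $x_n\notin\Sigma_{\mathcal{D}}$. Set the scaling factor $\mu_n:=M_n^{-(r-1)/(2s)}\to 0$, chosen so that the nonlinearity $u^r$ is invariant, and define
\[
v_n(y):=M_n^{-1}\,u_n(x_n+\mu_n y),\qquad y\in\Omega_n:=\mu_n^{-1}(\Omega-x_n).
\]
Then $0\le v_n\le 1$, $v_n(0)=1$, and $v_n$ solves
\[
(-\Delta)^s v_n=\lambda_n M_n^{q-r}v_n^{q}+v_n^{r}\quad\text{in }\Omega_n,
\]
with mixed boundary conditions inherited from $B(u_n)=0$. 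Because $q<r$ and $\lambda_n$ is bounded, the first term tends to $0$ in $L^\infty_{\mathrm{loc}}$.

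The next step is to pass to the limit using the Caffarelli--Silvestre/Cabré--Tan extension in the mixed setting, working with the extended functions $w_n$ on $\Omega_n\times(0,\infty)$. Standard interior and boundary $C^\alpha$ estimates for degenerate equations with $A_2$-weight (adapted to Dirichlet, Neumann, or mixed portions of the boundary) give, up to a subsequence, $v_n\to v$ locally uniformly on the limiting domain, with $v(0)=1$ and $0\le v\le 1$. I would then split into cases according to the behaviour of $d_n:=\mathrm{dist}(x_n,\partial\Omega)/\mu_n$ and the limit of $x_n$:
\begin{enumerate}
\item[(i)] $d_n\to\infty$ (or $x_n\to x_0\in\Omega$): the limit problem is $(-\Delta)^s v=v^r$ in $\mathbb{R}^N$ with $0\le v\le 1$, $v(0)=1$. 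The classical fractional Liouville theorem (Chen--Li--Ou, valid for $1<r<\tfrac{N+2s}{N-2s}$) forces $v\equiv 0$, a contradiction.
\item[(ii)] $x_n\to x_0\in\overline{\Sigma_{\mathcal{N}}}\setminus\Gamma$ with $d_n$ bounded: after straightening the boundary, the limit is $(-\Delta)^s v=v^r$ in a half-space with homogeneous Neumann condition. Reflecting $v$ evenly across the boundary yields a nontrivial bounded solution in $\mathbb{R}^N$, again contradicting the Liouville theorem.
\item[(iii)] $x_n\to x_0\in\Sigma_{\mathcal{D}}$ with $d_n$ bounded: the limit is the Dirichlet problem in a half-space, and the corresponding half-space Liouville theorem (valid in the full subcritical range) again forces $v\equiv 0$.
\end{enumerate}

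The delicate case, and what I expect to be the main obstacle, is when $x_n\to x_0\in\Gamma$ with $\mathrm{dist}(x_n,\Gamma)/\mu_n$ bounded: the blown-up limit lives on a half-space whose flat boundary is split into a Dirichlet and a Neumann half-hyperplane meeting at an $(N-2)$-plane. No off-the-shelf Liouville theorem applies directly. I would handle this by first showing that the maximum point $x_n$ stays uniformly away from $\Gamma$ on the scale $\mu_n$: the Dirichlet portion forces $u_n$ to vanish there, and using the Hopf-type boundary behaviour of $s$-harmonic extensions together with the local $C^{0,s}$ regularity near $\Gamma$ one can bound $u_n$ in a tubular neighbourhood of $\Gamma$ independently of $n$. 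With this \textit{a priori} control near $\Gamma$, the maximum cannot concentrate there, and one reduces to the previous three cases. An alternative is to perform an odd reflection across the Dirichlet face to extend $v$ to a larger half-space with pure Neumann condition and then reflect again to $\mathbb{R}^N$, reducing to the Liouville theorem; the subcritical hypothesis $r<\tfrac{N+2s}{N-2s}$ is what guarantees the contradiction in every case.
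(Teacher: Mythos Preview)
Your blow-up scheme and the handling of the interior, pure Dirichlet, and pure Neumann limits coincide with the paper's argument. The genuine gap is the interface case $x_n\to x_0\in\Gamma$ with $\mathrm{dist}(x_n,\Gamma)/\mu_n$ bounded, where neither of your two suggested routes goes through.

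Your first option is circular. Any Hopf-type or $C^{0,\gamma}$ estimate near $\Gamma$ carries a constant that depends on the $L^\infty$ norm of the right-hand side $\lambda_n u_n^q+u_n^r$, hence on $M_n^{\,r}$; you cannot bound $u_n$ in a tube around $\Gamma$ independently of $n$ without already assuming the conclusion. Your second option fails for a structural reason: the Dirichlet and Neumann pieces sit on the \emph{same} hyperplane $\{x_N=0\}$. An odd reflection in $x_N$ is discontinuous across the Neumann portion (where $v>0$), while an even reflection produces a non-zero normal derivative on the Dirichlet portion. No combination of reflections converts the mixed half-space problem into a whole-space or pure-boundary problem to which a standard Liouville theorem applies.

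The paper does not sidestep the mixed half-space limit; it confronts it. Section~3 is devoted to proving (Theorem~\ref{thmonotonia}), via a moving-planes argument carried out on the fractional Kelvin transform of the extension, that any bounded positive solution of $(-\Delta)^s v=v^r$ in $\mathbb{R}^N_+$ with $v=0$ on $\{x_N=0,\ x_1\le\tau\}$ and $\partial v/\partial x_N=0$ on $\{x_N=0,\ x_1>\tau\}$ is nondecreasing in the $x_1$-direction. In the blow-up limit the global maximum $v(0)=1$ is attained at the origin, and this monotonicity combined with the Hopf lemma (or, when $d^\Omega=0$ and $\tau\ge 0$, simply the continuity of $v$ at a Dirichlet boundary point) yields the contradiction. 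This monotonicity result for the mixed half-space is the missing key ingredient in your outline and is the main technical content behind Theorem~\ref{acotacion}.
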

We also obtain uniform $L^{\infty}$-estimates, in the case in which  we
move the boundary conditions. To be precise we consider a family of sets $\{\Sigma_{\mathcal{D}}(\alpha)\}$, with $\alpha\in (0, |\partial\Omega|]$ and $|\cdot|$ denoting the Lebesgue
measure in the appropriate dimension, such that:
\begin{itemize}
\item[(B$_1$)] $\Sigma_{\mathcal{D}}(\alpha)$ is connected or has a finite number of connected components.
\item[(B$_2$)] $\Sigma_{\mathcal{D}}(\alpha_1)\subset\Sigma_{\mathcal{D}}(\alpha_2)$ if $\alpha_1<\alpha_2$.
\item[(B$_3$)] $|\Sigma_{\mathcal{D}}(\alpha)|=\alpha$.
\end{itemize}
We call
$\Sigma_{\mathcal{N}}(\alpha)=\partial\Omega\backslash\Sigma_{\mathcal{D}}(\alpha)$
and we assume that
$\Sigma_{\mathcal{D}}(\alpha)\cap\overline{\Sigma}_{\mathcal{N}}(\alpha)=\Gamma(\alpha)$
is a $(N-2)$-dimensional smooth submanifold. For a family of this type we consider the corresponding family of mixed boundary value problems,
\begin{equation} \label{pal}
        \left\{
        \begin{tabular}{lcl}
        $(-\Delta)^su=\lambda u^q+u^r$ & &in $\Omega$, \\
        $u>0$& & in $\Omega$,\\
        $B_{\alpha}(u)=0$  & &on $\partial\Omega$, \\
        \end{tabular}
        \right.
        \tag{$P_{\alpha,\lambda}$}
\end{equation}
where $B_{\alpha}(u)$ is defined as $B(u)$ with
$\Sigma_{\mathcal{D}}$, $\Sigma_\mathcal{N}$ replaced by
$\Sigma_{\mathcal{D}}(\alpha)$, $\Sigma_\mathcal{N}(\alpha)$
satisfying the corresponding hypotheses $(\mathfrak{B}_{\alpha})$ and $(B_1)$-$(B_3)$. In this scenario we prove the following result.

\begin{theorem}\label{acotacion_unif}
Consider the family  $\{\Sigma_{\mathcal{D}}(\alpha)\}_{\alpha\in(0,|\partial \Omega|]}$  satisfying the hypotheses $(\mathfrak{B}_{\alpha})$ and $(B_1)$-$(B_3)$. For every $0<\varepsilon<|\partial\Omega|$, let us denote $I_{\varepsilon}=[\varepsilon,|\partial\Omega|]$ and let
\begin{equation*}
\mathcal{S}_{\varepsilon}=\{u:\Omega\rightarrow\mathbb{R}|\
\text{such that}\ u\ \text{is solution of}\ \eqref{pal},\mbox{ with } \alpha\in
I_{\varepsilon}\}.
\end{equation*}
Then, there exists a constant
$\mathcal{M}_{\varepsilon}>0$ such that
\begin{equation*}
\|u\|_{L^{\infty}(\Omega)}\leq\mathcal{M}_{\varepsilon},\quad \forall
u\in\mathcal{S}_{\varepsilon}.
\end{equation*}
\end{theorem}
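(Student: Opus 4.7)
The plan is to argue by contradiction via a blow-up/rescaling analysis paralleling the proof of Theorem \ref{acotacion}, but carried out uniformly in the parameter $\alpha$ that indexes the partition $(\Sigma_{\mathcal{D}}(\alpha),\Sigma_{\mathcal{N}}(\alpha))$. Suppose the conclusion fails: then there exist $\alpha_n\in I_{\varepsilon}$, admissible $\lambda_n\in[0,\Lambda(\alpha_n)]$, and solutions $u_n$ of \eqref{pal} with $M_n:=\|u_n\|_{L^{\infty}(\Omega)}\to\infty$. By compactness of $I_{\varepsilon}$, along a subsequence $\alpha_n\to\alpha_0\in I_{\varepsilon}$, so $\alpha_0\ge\varepsilon>0$; pick maximum points $x_n$ with $u_n(x_n)=M_n$ and $x_n\to x_0\in\overline{\Omega}$. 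As a preliminary step, I would first establish that $\sup_{\alpha\in I_{\varepsilon}}\Lambda(\alpha)<+\infty$ (so that the $\lambda_n$ remain bounded), via a uniform sub/supersolution comparison that exploits $|\Sigma_{\mathcal{D}}(\alpha)|\ge\varepsilon$ together with the monotonicity condition (B$_2$).

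Next, perform the scaling
\[
v_n(y):=M_n^{-1}\,u_n\!\left(x_n+M_n^{-(r-1)/(2s)}\,y\right)
\]
on the rescaled domain $\Omega_n=M_n^{(r-1)/(2s)}(\Omega-x_n)$, so that $v_n$ satisfies $(-\Delta)^{s}v_n=\lambda_n M_n^{q-r}v_n^{q}+v_n^{r}$ in $\Omega_n$ with the appropriately rescaled mixed boundary conditions and $0\le v_n\le v_n(0)=1$. Because $q<r$, the concave term disappears in the limit. Using interior and boundary regularity estimates for the spectral fractional Laplacian together with a convergence argument on the associated Caffarelli--Silvestre type extension, extract a nontrivial locally uniform limit $v_{\infty}$. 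The shape of the limit domain, and the boundary condition it inherits, then splits into four cases according to the rescaled distance of $x_n$ from $\partial\Omega$ and to the location of $x_0$: (a) $x_n$ stays at rescaled distance going to infinity from $\partial\Omega$, giving $(-\Delta)^{s}v_{\infty}=v_{\infty}^{r}$ in $\mathbb{R}^{N}$; (b) $x_0\in\Sigma_{\mathcal{N}}(\alpha_0)\setminus\Gamma(\alpha_0)$, giving a Neumann limit on $\mathbb{R}^{N}_{+}$; (c) $x_0\in\Sigma_{\mathcal{D}}(\alpha_0)\setminus\Gamma(\alpha_0)$, giving a Dirichlet limit on $\mathbb{R}^{N}_{+}$; and (d) $x_0\in\Gamma(\alpha_0)$, giving a mixed Dirichlet--Neumann limit on $\mathbb{R}^{N}_{+}$ with the two portions separated by a hyperplane. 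In each case the subcritical hypothesis $r<\frac{N+2s}{N-2s}$ and the corresponding Liouville theorem for the fractional Laplacian force $v_{\infty}\equiv 0$, contradicting $v_{\infty}(0)=1$.

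The hard part will be case (d), the mixed-type blow-up at interior points of $\Gamma(\alpha_0)$: one must rule out positive bounded solutions of $(-\Delta)^{s}v=v^{r}$ on $\mathbb{R}^{N}_{+}$ with Dirichlet condition on one half-hyperplane of the boundary and zero Neumann condition on the other. I expect to handle this by an even/odd reflection across the Neumann half-hyperplane so as to reduce to a pure Dirichlet Liouville theorem on a larger half-space, or, as a backup, by a moving-planes/Pohozaev argument on the extended problem. The assumption $\alpha\ge\varepsilon$ is precisely what prevents the degenerate limit $\alpha_0=0$, in which case the limit problem would be purely Neumann on $\mathbb{R}^{N}_{+}$ and positive constants would solve it, destroying the contradiction. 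Finally, the continuity of the geometric data guaranteed by (B$_1$)--(B$_3$), and in particular the Hausdorff convergence $\Gamma(\alpha_n)\to\Gamma(\alpha_0)$, is what justifies the passage to the limit in the rescaled mixed boundary conditions during the blow-up.
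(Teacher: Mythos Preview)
Your overall strategy coincides with the paper's: contradict, blow up with scale $\mu_n=M_n^{-(r-1)/(2s)}$, pass to a limit problem, and split into the interior, Dirichlet, Neumann and interface cases. The paper's proof is literally a reduction to Theorem~\ref{acotacion}, the only new ingredient being that the H\"older compactness of the rescaled sequence must now be \emph{uniform in $\alpha$}; this is precisely what \cite[Corollary~1.1]{CCLO} provides under $(\mathfrak{B}_\alpha)$ and $(B_1)$--$(B_3)$ when $\alpha\ge\varepsilon$, and you should invoke it explicitly rather than a generic ``interior and boundary regularity''.

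The one place where your proposal diverges from the paper, and where it would not go through as written, is case~(d). Your primary plan --- an even/odd reflection across the Neumann half-hyperplane to land on a pure Dirichlet Liouville problem --- does not work: the boundary of the limit half-space $\{x_N=0\}$ carries \emph{both} conditions, and neither an even nor an odd reflection in $x_N$ extends the equation across the whole hyperplane (even reflection fails across the Dirichlet portion, odd reflection fails across the Neumann portion). There is no ``larger half-space'' with a pure boundary condition to reflect into. The paper instead invokes Theorem~\ref{thmonotonia} (monotonicity of the limit solution in the $x_1$-direction, proved by moving planes on the Kelvin transform of the extended problem) together with the Hopf Lemma at the interior maximum point to obtain the contradiction. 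This is exactly your ``backup'' option, and it is the one you should pursue from the start; note that no Liouville theorem in the usual sense is used in this case.
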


In addition, we will also prove the following behavior for the minimal solutions as we move the boundary conditions.
\begin{theorem}\label{teo:movbc}
Consider the family  $\{\Sigma_{\mathcal{D}}(\alpha)\}_{\alpha\in(0,|\partial \Omega|]}$  satisfying the hypotheses $(\mathfrak{B}_{\alpha})$ and $(B_1)$-$(B_3)$. Then
\begin{enumerate}
\item the  minimal solutions $\{\underline{u}(\alpha)\}$ are uniformly bounded
for any $\alpha\in [0,|\partial\Omega|]$. Moreover,
\begin{equation*}
\|\underline{u}(\alpha)\|_{H^{s}(\Omega)},\ \|\underline{u}(\alpha)\|_{L^{\infty}(\Omega)}\rightarrow 0\ \text{as}\ \alpha \rightarrow 0;
\end{equation*}
\item the non minimal solutions (\textit{of mountain pass type}) are bounded and they converge to zero in $H^s(\Omega)$ as $\alpha\to 0$.
\end{enumerate}
\end{theorem}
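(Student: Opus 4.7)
The heart of the argument is that the first eigenvalue $\lambda_1^s(\alpha)$ of the mixed Dirichlet--Neumann spectral fractional Laplacian satisfies $\lambda_1^s(\alpha)\to 0$ as $\alpha\to 0$: in the limit the Dirichlet portion vanishes, constants become admissible, and the first Neumann eigenvalue is zero. Together with the elementary pointwise inequality $\lambda u^q + u^r \ge c_{q,r}\, \lambda^{(r-1)/(r-q)}\, u$ for all $u\ge 0$ (obtained by minimizing $\lambda u^{q-1}+u^{r-1}$), testing \eqref{pal} against the first eigenfunction $\phi_1^\alpha$ yields $\Lambda(\alpha) \le C\, \lambda_1^s(\alpha)^{(r-q)/(r-1)}\to 0$. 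Hence for any family of solutions the underlying parameter $\lambda=\lambda(\alpha)\in(0,\Lambda(\alpha)]$ satisfies $\lambda(\alpha)\to 0$, a fact used throughout.

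For the minimal solutions, uniform $L^\infty$ control on any $[\varepsilon,|\partial\Omega|]$ is Theorem \ref{acotacion_unif}. For $\alpha$ close to $0$ I would exploit that $\underline{u}(\alpha)$, obtained by monotone iteration from the subsolution $0$, lies below every positive supersolution. The ansatz $v=\eta\phi_1^\alpha$ is a supersolution provided $\eta\lambda_1^s(\alpha) \ge \lambda (\eta\|\phi_1^\alpha\|_\infty)^q + (\eta\|\phi_1^\alpha\|_\infty)^r$, which is solvable with $\eta=\eta(\alpha)\to 0$ whenever $\lambda,\lambda_1^s(\alpha)\to 0$; this delivers $\underline{u}(\alpha)\to 0$ in $L^\infty$. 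Testing the equation with $\underline{u}(\alpha)$ itself gives
\[
\|\underline{u}(\alpha)\|_{H^s}^2 = \lambda\,\|\underline{u}(\alpha)\|_{L^{q+1}}^{q+1} + \|\underline{u}(\alpha)\|_{L^{r+1}}^{r+1},
\]
and both terms vanish because $\lambda\to 0$ and $\|\underline{u}(\alpha)\|_\infty\to 0$, yielding the $H^s$-convergence.

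For the mountain-pass solutions $u_\alpha$, $L^\infty$-boundedness descends from Theorem \ref{acotacion_unif} (extended down to $\alpha=0$ by the same blow-up--rescaling scheme used in its proof), and combined with the test-with-itself identity it also bounds $\|u_\alpha\|_{H^s}$ uniformly. Extract a subsequence with $u_\alpha\rightharpoonup u^*$ weakly in $H^s$ and strongly in every $L^p$ with $p<\tfrac{2N}{N-2s}$. Passing to the limit in \eqref{pal}---most transparently through the Caffarelli--Silvestre extension to $\Omega\times(0,\infty)$, which turns the moving mixed conditions into a local statement on $\Omega\times\{0\}$---shows that $u^*\ge 0$ solves the pure Neumann problem $(-\Delta)^s u^* = (u^*)^r$; testing this limit equation against the constant $1$ forces $u^*\equiv 0$. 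Hence $u_\alpha\to 0$ in $L^{q+1}$ and $L^{r+1}$, and the test-with-itself identity then produces $\|u_\alpha\|_{H^s}\to 0$. The main technical obstacle is precisely this passage to the limit as $\Sigma_{\mathcal{D}}(\alpha)$ itself degenerates: the natural energy spaces change with $\alpha$, so a $\Gamma$-convergence-type argument on the cylinder, with careful bookkeeping of the Dirichlet capacity of $\Sigma_{\mathcal{D}}(\alpha)\times\{0\}$ as $\alpha\to 0$, is needed to rigorously identify the limit equation as purely Neumann.
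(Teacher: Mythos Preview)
Your overall strategy---to push everything through $\lambda_1^s(\alpha)\to 0$ and $\Lambda(\alpha)\to 0$---matches the paper's, but both of your core arguments diverge from the paper's and each contains a genuine gap.

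\textbf{Minimal solutions.} The supersolution $v=\eta\,\phi_1^\alpha$ does \emph{not} work. The supersolution condition reads, pointwise,
\[
\eta\,\lambda_1^s(\alpha)\,\phi_1^\alpha(x)\;\ge\;\lambda\,\eta^q\,\bigl(\phi_1^\alpha(x)\bigr)^{q}+\eta^r\,\bigl(\phi_1^\alpha(x)\bigr)^{r},
\]
and since $q<1$ the concave term $\lambda\,\eta^{q}(\phi_1^\alpha)^{q}$ dominates the left side wherever $\phi_1^\alpha$ is small; in particular it fails near $\Sigma_{\mathcal D}(\alpha)$, where $\phi_1^\alpha\to 0$. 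So no choice of $\eta$ makes $\eta\phi_1^\alpha$ a supersolution of the concave--convex problem. The paper avoids this altogether: it proves a local uniqueness statement (Lemma~\ref{lem:bounduniq}) asserting that for each $\alpha$ there is a threshold $A(\alpha)>0$ such that \eqref{pal} admits at most one solution with $\|u\|_\infty<A(\alpha)$; then it shows $A(\alpha)\le \big(\beta(\alpha)/r\big)^{1/(r-1)}$ with $\beta(\alpha)\le \lambda_1^s(\alpha)\to 0$, so $A(\alpha)\to 0$ and the minimal solution, which lives below this threshold, is forced to zero.

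\textbf{Mountain-pass solutions.} Your compactness/limit-problem route is conceptually different from the paper's. The paper does a direct energy estimate: it lower-bounds $I_0$ on $H^s_{\Sigma_{\mathcal D}(\alpha)}$ using the Sobolev/Poincar\'e inequality with constant $\sim\lambda_1^s(\alpha)^{-(r+1)/2}$, and reads off that the mountain-pass radius $t_\alpha$ satisfies $t_\alpha\le C\,\lambda_1^s(\alpha)^{(r+1)/(2(r-1))}\to 0$, so $\|u_\alpha\|_{H^s}\to 0$ directly. Your argument instead relies on (i) a uniform $L^\infty$ bound all the way to $\alpha=0$, which you import from Theorem~\ref{acotacion_unif} ``extended down to $\alpha=0$''---but that theorem is stated only on $[\varepsilon,|\partial\Omega|]$ precisely because the underlying H\"older estimates of \cite{CCLO} are not uniform as $\alpha\to 0$; and (ii) identifying the limit equation as purely Neumann, which you yourself flag as the main technical obstacle. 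Both are real gaps; the paper's functional-geometry argument sidesteps them entirely and gives the $H^s$-smallness without ever passing to a limit problem.
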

The paper is organized as follows: In section 2, we introduce the appropriate functional framework for the spectral fractional Laplace operator.
In that section we also recall the extension technique due to Caffarelli and Silvestre, see \cite{CS}, that provides   an equivalent definition of the fractional Laplace operator via an auxiliary problem.
In section 3 we   study a half-space problem that will be useful in the proof of the main theorem;  we make use of the moving planes
method and we extend some results of \cite{DG} to the
fractional setting. Section 4 is devoted to   the concave-convex problem by means of certain limit problems, and we also prove Theorem \ref{acotacion}
 and Theorem \ref{acotacion_unif} which are based on the blow-up method of \cite{GS}. To accomplish this step we need some
 compactness properties that requires to know precise H\"older estimates for the solutions to mixed boundary problems. We use the results of \cite{CCLO} where the H\"older regularity of such solutions is proven. Section 5 is devoted to the proof of Theorem \ref{Thuno} and the behavior when we move the boundary conditions of some class of solutions.

\section{Functional setting and preliminaries}
As far as the fractional Laplace operator is concerned, we recall its definition given through the spectral decomposition. We closely follow the notation and framework of \cite{CCLO}.
Let $(\varphi_i,\lambda_i)$, $i\in \mathbb{N}$,  be the
eigenfunctions (normalized with respect to the $L^2(\Omega)$-norm)
and the eigenvalues of $(-\Delta)$ equipped with homogeneous mixed Dirichlet-Neumann boundary data, respectively.
Then the pairs  $(\varphi_i,\lambda_i^s)$,  $i\in \mathbb{N}$, turn out to be  the eigenfunctions and
eigenvalues of the fractional operator $(-\Delta)^s$.
Consequently, given two smooth functions $u_i(x)$, $i=1,2$,
 we have that  $\displaystyle u_i(x)=\sum_{j\geq1}\langle u_i,\varphi_j\rangle\varphi_j$,  and thus
\[
\langle
 (-\Delta)^s u_1, u_2
\rangle
=
\sum_{j\ge 1} \lambda_j^s\langle u_1,\varphi_j\rangle \langle u_2,\varphi_j\rangle,
\]
i.e., the action of the fractional operator on a function $u_1$ is given by
\begin{equation*}
(-\Delta)^su_1=\sum_{j\ge 1} \lambda_j^s\langle u_1,\varphi_j\rangle\varphi_j.
\end{equation*}
Hence the operator $(-\Delta)^s$ is well defined for functions that belong to  the fractional Sobolev Space   that vanish on $\Sigma_{\mathcal{D}}$.
Indeed for any smooth function we consider its spectral decomposition as
$$
u=\sum_{j\ge 1} a_j\varphi_j \qquad \mbox{with } \quad a_j  = \langle u,\varphi_j\rangle  \in \ell^2
$$
that allows us to define the following norm
$$
\|u\|_{H^s(\Omega)}^2=
\sum_{j\ge 1} a_j^2\lambda_j^s\,.
$$
Thus we define the Sobolev Space as
$$
H_{\Sigma_{\mathcal{D}}}^s(\Omega)= \overline{C^{\infty}_0 ( \Omega\cup \mathcal{N})}^{\| \cdot \|_{H^s(\Omega)}}.
$$
Observe that for any   $u\in H_{\Sigma_{\mathcal{D}}}^s(\Omega)$ then
\begin{equation*}
\|u\|_{H_{\Sigma_{\mathcal{D}}}^s(\Omega)}=\left\|(-\Delta)^{\frac{s}{2}}u\right\|_{L^2(\Omega)}.
\end{equation*}
As already stressed in \cite[Theorem 11.1]{LM}, if $0<s\le \frac{1}{2}$ then $H_0^s(\Omega)=H^s(\Omega)$ and, therefore, also $H_{\Sigma_{\mathcal{D}}}^s(\Omega)=H^s(\Omega)$, while for $\frac 12<s<1$, $H_0^s(\Omega)\subsetneq H^s(\Omega)$. Hence, the range $\frac 12<s<1$, for which we have $H_{\Sigma_{\mathcal{D}}}^s(\Omega)\subsetneq H^s(\Omega)$, provides   the correct functional space to study the mixed boundary problem \eqref{problema_abajo}.\newline
This definition of the fractional powers of the Laplace operator allows us to integrate by parts in the appropriate spaces, so that a natural definition of weak solution to problem \eqref{problema_abajo} is the following.
\begin{definition}
We say that a positive function $u\in H_{\Sigma_{\mathcal{D}}}^s(\Omega)$   is a solution to \eqref{problema_abajo} if
\begin{equation*}
\int_{\Omega}(-\Delta)^{s/2}u \,(-\Delta)^{s/2}\psi dx=\int_{\Omega}\left(\lambda u^q+u^r\right)\psi dx,\ \ \text{for all}\ \psi\in H_{\Sigma_{\mathcal{D}}}^s(\Omega).
\end{equation*}
\end{definition}
Following the previous definition, we can associate to problem \eqref{problema_abajo} the following energy functional,
\begin{equation}\label{funct:abajo}
I_\lambda(u)=\frac 12\int_{\Omega}|(-\Delta)^{s/2}u|^2dx-\frac{\lambda}{q+1}\int_{\Omega} |u|^{q+1}dx-\frac{1}{r+1}\int_{\Omega}|u|^{r+1} dx,\quad u\in H_{\Sigma_{\mathcal{D}}}^s(\Omega),
\end{equation}
whose critical points correspond to solutions of \eqref{problema_abajo}.

Working with the fractional operator $(-\Delta)^s$  it is well known that some difficulties arise when one tries to obtain explicit expressions
involving the action of the fractional Laplacian on, for example, products of functions. In order to overcome this difficulties,
we use the ideas of Caffarelli and Silvestre, see \cite{CS}, together with those of \cite{BCdPS, CT} to give an equivalent definition of the operator
$(-\Delta)^s$ by means of an auxiliary problem that we introduce next.\newline
Given a domain $\Omega$, we set the cylinder $\mathcal{C}_{\Omega}=\Omega\times(0,\infty)\subset\mathbb{R}_+^{N+1}$.
We denote with $(x,y)$ points that belong to $\mathcal{C}_{\Omega}$ and with
$\partial_L\mathcal{C}_{\Omega}=\partial\Omega\times[0,\infty)$ the lateral boundary of the  cylinder.

Let us also denote by $\Sigma_{\mathcal{D}}^*=\Sigma_{\mathcal{D}}\times[0,\infty)$ and $\Sigma_{\mathcal{N}}^*=\Sigma_{\mathcal{N}}\times[0,\infty)$ as well as $\Gamma^*=\Gamma\times[0,\infty)$.

It is clear that, by construction,
\begin{equation*}
\Sigma_{\mathcal{D}}^*\cap\Sigma_{\mathcal{N}}^*=\emptyset\,, \quad \Sigma_{\mathcal{D}}^*\cup\Sigma_{\mathcal{N}}^*=\partial_L\mathcal{C}_{\Omega} \quad \mbox{and} \quad \Sigma_{\mathcal{D}}^*\cap\overline{\Sigma_{\mathcal{N}}^*}=\Gamma^*\,.
\end{equation*}
 Given a function $u\in H_{\Sigma_{\mathcal{D}}}^s(\Omega)$ we define its $s$-extension, denoted by $U=E_{s}[u]$, as the solution to the problem
\begin{equation*}
        \left\{
        \begin{array}{rlcl}
        \displaystyle   -\text{div}(y^{1-2s}\nabla U)&\!\!\!\!=0  & & \mbox{ in } \mathcal{C}_{\Omega} , \\
        \displaystyle B(U)&\!\!\!\!=0   & & \mbox{ on } \partial_L\mathcal{C}_{\Omega} , \\
         \displaystyle U(x,0)&\!\!\!\!=u(x)  & &  \mbox{ on } \Omega\times\{y=0\} ,
        \end{array}
        \right.
\end{equation*}
where
\begin{equation*}
B(U)=U\rchi_{\Sigma_{\mathcal{D}}^*}+\frac{\partial U}{\partial \nu}\rchi_{\Sigma_{\mathcal{N}}^* },
\end{equation*}
being $\nu$, with an abuse of notation\footnote{Let $\nu$ be the outwards normal vector to $\partial\Omega$ and $\nu_{(x,y)}$
the outwards normal vector to $\mathcal{C}_{\Omega}$ then, by construction, $\nu_{(x,y)}=(\nu,0)$, $y>0$.}, the exterior normal to $\partial_L\mathcal{C}_{\Omega}$. Following the well known result by Caffarelli and Silvestre (see \cite{CS}), $U$ is related to the fractional Laplacian of the original function through the formula
\begin{equation*}
\frac{\partial U}{\partial \nu^s}:= -\kappa_s \lim_{y\to 0^+} y^{1-2s}\frac{\partial U}{\partial y}=(-\Delta)^su(x),
\end{equation*}
where $\kappa_s$ is a  suitable positive constant (see \cite{BCdPS} for its exact value). The extension function belongs to the space
\begin{equation*}
H_{\Sigma_{\mathcal{D}}^*}^1(\mathcal{C}_{\Omega},y^{1-2s}dxdy) : =\overline{\mathcal{C}_{0}^{\infty}((\Omega\cup\Sigma_{\mathcal{N}})\times[0,\infty))}^{\|\cdot\|_{H_{\Sigma_{\mathcal{D}}^*}^1(\mathcal{C}_{\Omega},y^{1-2s}dxdy)}},
\end{equation*}
that is a Hilbert space equipped with the norm induced by the scalar product
\begin{equation*}
\langle U, V \rangle_{H_{\Sigma_{\mathcal{D}}^*}^1(\mathcal{C}_{\Omega},y^{1-2s}dxdy)}=\kappa_s \int_{\mathcal{C}_{\Omega}}y^{1-2s} \langle\nabla U,\nabla V\rangle dxdy.
\end{equation*}
Moreover, the following inclusions are satisfied, for $\frac 12<s<1$,
\begin{equation} \label{embedd}
H_0^1(\mathcal{C}_{\Omega},y^{1-2s}dxdy)\subset H_{\Sigma_{D}^*}^1(\mathcal{C}_{\Omega},y^{1-2s}dxdy)\subsetneq H^1(\mathcal{C}_{\Omega},y^{1-2s}dxdy),
\end{equation}
with $H_0^1(\mathcal{C}_{\Omega},y^{1-2s}dxdy)$ the space of functions that belong to $H^1(\mathcal{C}_{\Omega},y^{1-2s}dxdy)$ and vanish on the lateral boundary of $\mathcal{C}_{\Omega}$.

Consequently  we can reformulate   problem \eqref{problema_abajo} in terms of the extension problem as follows:
\begin{equation}\label{extension_problem}
        \left\{
        \begin{array}{rlcl}
        \displaystyle   -\text{div}(y^{1-2s}\nabla U)&\!\!\!\!=0  & & \mbox{ in } \mathcal{C}_{\Omega} , \\
        \displaystyle B(U)&\!\!\!\!=0   & & \mbox{ on } \partial_L\mathcal{C}_{\Omega} , \\
        U &\!\!\!> 0  & &  \mbox{ on } \Omega\times\{y=0\}
        \\
         \displaystyle \frac{\partial U}{\partial \nu^s}&\!\!\!\!=\lambda\, U^q+U^r  & &  \mbox{ on } \Omega\times\{y=0\} .
        \end{array}
        \right.
        \tag{$P_{\lambda}^*$}
\end{equation}

 Hence we give a definition of energy solution of \eqref{extension_problem} in the following way.

\begin{definition}
An {\rm energy solution} to problem \eqref{extension_problem} is a function $U\in H_{\Sigma_{\mathcal{D}}^*}^1(\mathcal{C}_{\Omega},y^{1-2s}dxdy)$, with $U>0$ on $\Omega\times\{y=0\}$, such that
\begin{equation*}
\kappa_s\int_{\mathcal{C}_{\Omega}} y^{1-2s} \langle\nabla U,\nabla\varphi\rangle  \  dxdy=\int_{\Omega} \left(\lambda\, U^q(x,0)+U^r(x,0)\right)\varphi(x,0)dx,
\end{equation*}
for all $\varphi\in H_{\Sigma_{\mathcal{D}}^*}^1(\mathcal{C}_{\Omega},y^{1-2s}dxdy)$.
\end{definition}
For  any weak or energy solution $U\in H_{\Sigma_{\mathcal{D}}^*}^1(\mathcal{C}_{\Omega},y^{1-2s}dxdy)$ to problem \eqref{extension_problem} we can associate the function $u (x) =Tr[U(x,y) ]=U(x,0)$, that  belongs to $H_{\Sigma_{\mathcal{D}}}^s(\Omega)$, and solves problem \eqref{problema_abajo}.
Moreover,  the viceversa is true: given a solution $u \in H_{\Sigma_{\mathcal{D}}}^s(\Omega)$ we can define its $s$-extension $U(x,y)$ as
a solution of \eqref{extension_problem} with $U\in H_{\Sigma_{\mathcal{D}}^*}^1(\mathcal{C}_{\Omega},y^{1-2s}dxdy)$.
Thus, both formulations are equivalent and the {\it Extension operator}
$$
E_s: H_{\Sigma_{\mathcal{D}}}^s(\Omega) \to H_{\Sigma_{\mathcal{D}}^*}^1(\mathcal{C}_{\Omega},y^{1-2s}dxdy),
$$
allows us to switch from \eqref{problema_abajo} to \eqref{extension_problem}.

According  with \cite{CS,BCdPS}, due to the choice of the constant $\kappa_s$, the extension operator $E_s$ is an isometry, i.e.,
\begin{equation*}
\|E_s[\varphi] (x,y) \|_{H_{\Sigma_{\mathcal{D}}^*}^1(\mathcal{C}_{\Omega},y^{1-2s}dxdy)}=\|\varphi (x) \|_{H_{\Sigma_{\mathcal{D}}}^s(\Omega)},\ \forall\ \varphi\in H_{\Sigma_{\mathcal{D}}}^s(\Omega).
\end{equation*}
It is also proved in \cite{BCdPS} that, given $z\in H_0^1(\mathcal{C}_{\Omega},y^{1-2s}dxdy)$, there exists $C_0=C_0(N,s,r,|\Omega|)$ such that the \textit{trace inequality},
\begin{equation*}
\int_{\mathcal{C}_{\Omega}}y^{1-2s}|\nabla z(x,y)|^2dxdy\geq C_0
\left(\int_{\Omega}|z(x,0)|^rdx\right)^{\frac{2}{r}},
\end{equation*}
holds provided $1\leq r\leq 2^*_s,\ N>2s$, where $2^*_s= \frac{2N}{N-2s}$ is the critical fractional Sobolev exponent.
Such inequality turns out to be very useful and it is in fact equivalent to the fractional Sobolev inequality,
\begin{equation*}
\int_{\Omega}|(-\Delta)^{\frac{s}2}v|^2dx\geq C_0\left(\int_{\Omega}|v|^rdx\right)^{\frac{2}{r}},\qquad \forall v\in H_{0}^s(\Omega),\ 1\leq r\leq 2^*_s ,\ N>2s.
\end{equation*}
When mixed boundary conditions are considered, the situation is quite similar since the Dirichlet condition is imposed on a set
$\Sigma_{\mathcal{D}} \subset \partial \Omega$ such that $|\Sigma_{\mathcal{D}}|=\alpha>0$. Hence, thanks to \eqref{embedd},
there exists a positive constant $S(\Sigma_{\mathcal{D}})=S(N,s,\Sigma_{\mathcal{D}},\Omega)$ such that
\begin{equation*}
0<S(\Sigma_{\mathcal{D}}):=\inf_{\substack{u\in H_{\Sigma_{\mathcal{D}}}^s(\Omega)\\ u\not\equiv 0}}\frac{\|u\|_{H_{\Sigma_{\mathcal{D}}}^s(\Omega)}^2}{\|u\|_{L^{2_s^*}(\Omega)}^2}
\le\inf_{\substack{u\in H_{0}^s(\Omega)\\ u\not\equiv 0}}\frac{\|u\|_{H_{0}^s(\Omega)}^2}{\|u\|_{L^{2_s^*}(\Omega)}^2}.
\end{equation*}
\begin{remark}\label{rem:sobconst}
Actually, $S(\Sigma_{\mathcal{D}})\leq 2^{-\frac{2s}{N}}C_0(N,s)$, see \cite{CO}. Moreover, taking in mind the spectral definition of the fractional
operator and making use of the H\"older inequality, it follows that $S(\Sigma_{\mathcal{D}})\leq |\Omega|^{\frac{2s}{N}}\lambda_1^s(\alpha)$, with
$\lambda_1(\alpha)$ the first eigenvalue of the Laplace operator with mixed boundary conditions on the sets $\Sigma_{\mathcal{D}}=
\Sigma_{\mathcal{D}}(\alpha)$ and $\Sigma_{\mathcal{N}}=\Sigma_{\mathcal{N}}(\alpha)$. Under geometrical assumptions $(B_1)$-$(B_3)$ one has that, by
\cite[Lemma 4.3]{ColP}, $\lambda_1(\alpha)\to 0$ as $\alpha\searrow 0$
which shows that $S(\Sigma_{\mathcal{D}})\to0$ as $\alpha\searrow 0$.
\end{remark}
Then, in analogy with the Dirichlet boundary data case, the following mixed trace inequality holds (see \cite{CCLO}).
\begin{lemma}\label{lem:traceineq}
There exists a constant $C=C(N,s,r,\Sigma_{\mathcal{D}},\Omega)>0$ such that,
\begin{equation}\label{eq:traceineq}
\int_{\mathcal{C}_{\Omega}} y^{1-2s} |\nabla \varphi|^2 dxdy\geq C
\left(\int_\Omega
|\varphi(x,0)|^{r} dx\right)^{\frac{2}{r}},
\end{equation}
for all $\varphi \in H_{\Sigma_{\mathcal{D}}^*}^1(\mathcal{C}_{\Omega},y^{1-2s}dxdy)$ and $1\leq r\leq 2^*_s,\ N>2s$, where $2^*_s= \frac{2N}{N-2s}$.
\end{lemma}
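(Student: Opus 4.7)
The plan is to reduce the mixed trace inequality to the already available Sobolev embedding on $H_{\Sigma_{\mathcal{D}}}^s(\Omega)$ by exploiting the energy-minimizing character of the $s$-extension together with the isometry $E_s$. Given $\varphi\in H_{\Sigma_{\mathcal{D}}^*}^1(\mathcal{C}_{\Omega},y^{1-2s}dxdy)$, I would let $u(x)=\varphi(x,0)$ and note that, by construction of the space, $u\in H_{\Sigma_{\mathcal{D}}}^s(\Omega)$ (it vanishes on $\Sigma_{\mathcal{D}}$). Let $U=E_s[u]$ be the $s$-extension, which solves the mixed problem with zero data on $\partial_L\mathcal{C}_{\Omega}$ and trace $u$ on $\Omega\times\{0\}$.

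Next, since $\varphi - U$ lies in $H_{\Sigma_{\mathcal{D}}^*}^1(\mathcal{C}_{\Omega},y^{1-2s}dxdy)$ and has zero trace on $\Omega\times\{0\}$, I would use it as a test function against the degenerate harmonic equation satisfied by $U$. Both lateral boundary contributions vanish: on $\Sigma_{\mathcal{D}}^*$ because $\varphi-U$ vanishes there, and on $\Sigma_{\mathcal{N}}^*$ because $\partial_\nu U=0$. The boundary term at $y=0$ vanishes because $(\varphi-U)(x,0)=0$. This yields the orthogonality relation
\begin{equation*}
\int_{\mathcal{C}_{\Omega}} y^{1-2s}\langle\nabla U,\nabla(\varphi-U)\rangle\,dxdy=0,
\end{equation*}
and hence the Pythagorean identity
\begin{equation*}
\int_{\mathcal{C}_{\Omega}} y^{1-2s}|\nabla\varphi|^2\,dxdy = \int_{\mathcal{C}_{\Omega}} y^{1-2s}|\nabla U|^2\,dxdy + \int_{\mathcal{C}_{\Omega}} y^{1-2s}|\nabla(\varphi-U)|^2\,dxdy,
\end{equation*}
so in particular $\int_{\mathcal{C}_{\Omega}} y^{1-2s}|\nabla\varphi|^2\,dxdy \geq \int_{\mathcal{C}_{\Omega}} y^{1-2s}|\nabla U|^2\,dxdy$.

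From here the conclusion is immediate: the isometry $E_s$ rewrites the right-hand side as $\kappa_s^{-1}\|u\|_{H_{\Sigma_{\mathcal{D}}}^s(\Omega)}^2$, and the mixed Sobolev embedding (the constant $S(\Sigma_{\mathcal{D}})>0$ displayed right before the statement) gives $\|u\|_{H_{\Sigma_{\mathcal{D}}}^s(\Omega)}^2\geq S(\Sigma_{\mathcal{D}})\,\|u\|_{L^{2_s^*}(\Omega)}^2$. This handles the critical exponent $r=2_s^*$, and for $1\leq r<2_s^*$ a single application of H\"older's inequality with exponent $2_s^*/r$ produces the factor $|\Omega|^{2/r-2/2_s^*}$, yielding \eqref{eq:traceineq} with an explicit constant $C=\kappa_s^{-1}S(\Sigma_{\mathcal{D}})|\Omega|^{2/2_s^*-2/r}$.

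The one step that genuinely needs care, and which I would flag as the main obstacle, is the orthogonality identity: it relies on a density argument to integrate by parts against $\varphi-U$ in the weighted space $H_{\Sigma_{\mathcal{D}}^*}^1(\mathcal{C}_{\Omega},y^{1-2s}dxdy)$, and on knowing that the Neumann-type condition $\partial_\nu U=0$ on $\Sigma_{\mathcal{N}}^*$ is compatible enough with the trace class of $\varphi-U$ to kill the lateral boundary integral. Everything else (the isometry, the Sobolev embedding, and H\"older) is quoted from the material already developed in the functional-setting section.
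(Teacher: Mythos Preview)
Your argument is correct, and it is the standard route to this inequality. Note, however, that the paper does not actually prove Lemma~\ref{lem:traceineq}: it simply cites \cite{CCLO} for the result, so there is no proof in the paper to compare against line by line.

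On the one point you flag as an obstacle: the orthogonality relation is not a delicate integration-by-parts issue but rather the variational characterization of $E_s[u]$. The extension $U=E_s[u]$ is, by construction, the unique minimizer of the weighted Dirichlet energy over the affine subspace $\{V\in H_{\Sigma_{\mathcal{D}}^*}^1(\mathcal{C}_{\Omega},y^{1-2s}dxdy): V(\cdot,0)=u\}$; equivalently, its weak formulation reads $\int_{\mathcal{C}_{\Omega}} y^{1-2s}\langle\nabla U,\nabla\psi\rangle\,dxdy=0$ for every $\psi\in H_{\Sigma_{\mathcal{D}}^*}^1(\mathcal{C}_{\Omega},y^{1-2s}dxdy)$ with $\psi(\cdot,0)=0$. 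Since $\varphi-U$ is exactly such a $\psi$, the Neumann contribution on $\Sigma_{\mathcal{N}}^*$ is already absorbed into this weak formulation and no separate boundary argument is needed. The only genuine prerequisite is that the trace map $H_{\Sigma_{\mathcal{D}}^*}^1(\mathcal{C}_{\Omega},y^{1-2s}dxdy)\to H_{\Sigma_{\mathcal{D}}}^s(\Omega)$ is well defined, which is part of the functional setup (and is what makes the isometry statement for $E_s$ meaningful).
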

As a consequence,
\begin{equation*}
\int_{\Omega}|(-\Delta)^{\frac{s}2}v|^2dx\geq \kappa_s C\left(\int_{\Omega}|v|^rdx\right)^{\frac{2}{r}},\qquad \forall v\in
H_{\Sigma_{\mathcal{D}}}^s(\Omega),\ 1\leq r\leq 2^*_s ,\ N>2s.
\end{equation*}
Note that in case $r=2^*_s$, then $\kappa_s C=S(\Sigma_{\mathcal{D}}).$
\section{Moving planes and monotonicity}
In this section we establish a monotonicity result for bounded
solutions to $(-\Delta)^s u =u^r$ in $\mathbb{R}_{+}^{N}\equiv
\mathbb{R}^{N-1} \times \mathbb{R}_{+}$ satisfying the boundary
conditions:
\begin{itemize}
 \item  $u=0$ on $\Sigma_{\mathcal{D}}(\tau)=\{(x_1,\cdots, x_N)\in \mathbb{R}^N: x_N=0,
x_1\leq \tau\},$
for some $\tau\in \mathbb{R}$.
\item $\frac{\partial u}{\partial x_N}=0$ on $\Sigma_{\mathcal{N}}(\tau)\Sigma_{\mathcal{N}}(\tau)=\{(x_1,\cdots, x_N)\in \mathbb{R}^N: x_N=0,
x_1 > \tau\}$,
for some $\tau\in \mathbb{R}$.
\end{itemize}
The principal result proven in this section is the following.
\begin{theorem}\label{thmonotonia} Assume that $1<r<\frac{N+2s}{N-2s}$, $N>2s$, and $\tau\in\mathbb{R}$. Let $u\in H_{loc}^s(\mathbb{R}_{+}^{N})\cap\mathcal{C}^0(\overline{\mathbb{R}_{+}^{N}})$ be a weak solution to
\begin{equation} \label{prob:mov:abajo}
        \left\{
        \begin{tabular}{rlr}
        $\displaystyle (-\Delta)^su=u^r$, &$u>0$, &in $\mathbb{R}_{+}^{N}$, \\
        $ u=0\ \ $ & &on $\ \Sigma_{\mathcal{D}}(\tau)$,\\
        $\frac{\partial u}{\partial x_N}=0\ \ $ & &\ on $\ \Sigma_{\mathcal{N}}(\tau)$.
        \end{tabular}
        \right.
\end{equation}
Then, $u$ is
nondecreasing with respect to the $x_1$-direction.
\end{theorem}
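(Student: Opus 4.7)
The plan is to implement the moving planes method on the $s$-extension of $u$. Lift the problem to the cylinder $\mathcal{C}_\Omega = \mathbb{R}_+^N\times(0,\infty)_t$ via $U = E_s[u]$, so that $U$ solves $-\mathrm{div}(t^{1-2s}\nabla U)=0$ in $\mathcal{C}_\Omega$ with $U=0$ on $\Sigma_{\mathcal{D}}^*(\tau)$, $\partial_{x_N}U=0$ on $\Sigma_{\mathcal{N}}^*(\tau)$, and $\partial_{\nu^s}U=U^r$ on $\mathbb{R}_+^N\times\{0\}$. For each $\mu>\tau$ introduce the reflection $U_\mu(x_1,x',x_N,t):=U(2\mu-x_1,x',x_N,t)$ and let $W_\mu:=U_\mu-U$ on the half-cylinder $\mathcal{C}_\mu:=\mathcal{C}_\Omega\cap\{x_1<\mu\}$. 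Proving $W_\mu\ge 0$ on $\mathcal{C}_\mu$ for every $\mu>\tau$ gives $u(2\mu-x_1,x',x_N)\ge u(x_1,x',x_N)$ whenever $x_1<\mu$, which is the announced monotonicity in $x_1$.

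First I would inventory the boundary data of $W_\mu$. On the interior wall $\{x_1=\mu\}$, $W_\mu\equiv 0$ by construction. On the Dirichlet face $\{x_N=0,\,x_1\le\tau\}\times[0,\infty)_t$ we have $U\equiv 0$ while the reflected point has first coordinate $2\mu-x_1\ge 2\mu-\tau>\mu>\tau$ and therefore lies on the Neumann side where $U\ge 0$; hence $W_\mu=U_\mu\ge 0$ there. On the Neumann face $\{x_N=0,\,\tau<x_1<\mu\}\times[0,\infty)_t$ both $U$ and its reflection satisfy $\partial_{x_N}(\cdot)=0$ (again because $2\mu-x_1>\tau$), so $\partial_{x_N}W_\mu=0$. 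On the base $\{t=0,\,x_N>0,\,x_1<\mu\}$ the mean value theorem linearises the reaction,
\begin{equation*}
\partial_{\nu^s}W_\mu = U_\mu^r - U^r = c_\mu(x)\,W_\mu(x,0), \qquad c_\mu(x) = r\,\xi_\mu(x)^{r-1}\ge 0,
\end{equation*}
with $\xi_\mu$ between $U(\cdot,0)$ and $U_\mu(\cdot,0)$.

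The core estimate is the energy identity obtained by testing against $W_\mu^-:=\min\{W_\mu,0\}$, which is admissible since $W_\mu^-$ vanishes on $\{x_1=\mu\}$ and on the Dirichlet face, giving
\begin{equation*}
\kappa_s\int_{\mathcal{C}_\mu} t^{1-2s}|\nabla W_\mu^-|^2\,dx\,dt = \int_{\Omega_\mu^-} c_\mu(x)\,\bigl(W_\mu^-(x,0)\bigr)^2\,dx,
\end{equation*}
where $\Omega_\mu^-:=\{x_1<\mu,\,x_N>0\}\cap\{W_\mu(\cdot,0)<0\}$. Combining the mixed trace inequality of Lemma~\ref{lem:traceineq} (applied to $W_\mu^-$ extended by zero across $\{x_1=\mu\}$) with H\"older's inequality for the conjugate pair $(N/(2s),\,N/(N-2s))$ and the identity $2\cdot N/(N-2s)=2_s^*$ reduces the identity to
\begin{equation*}
S\,\|W_\mu^-(\cdot,0)\|_{L^{2_s^*}(\Omega_\mu)}^2 \le \|c_\mu\|_{L^{N/(2s)}(\Omega_\mu^-)}\,\|W_\mu^-(\cdot,0)\|_{L^{2_s^*}(\Omega_\mu)}^2.
\end{equation*}
The subcritical assumption $r<(N+2s)/(N-2s)$ gives $(r-1)\tfrac{N}{2s}<2_s^*$, so $c_\mu\in L^{N/(2s)}_{\mathrm{loc}}$, and $W_\mu^-\equiv 0$ whenever $\|c_\mu\|_{L^{N/(2s)}(\Omega_\mu^-)}<S$.

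The hard part will be the initial step, because $\mathcal{C}_\mu$ is unbounded and $c_\mu$ need not be globally integrable, so the smallness of $\|c_\mu\|_{L^{N/(2s)}(\Omega_\mu^-)}$ must be extracted by geometric considerations adapting the narrow-region/integrability arguments of \cite{DG} to the degenerate weighted setting, using in particular the Hopf-type boundary corner behaviour of $U$ near $\Gamma^*(\tau)$ to confine the putative negativity set for $\mu$ close to $\tau$. Once this initial step is secured, set $\mu_\star:=\sup\{\mu>\tau:W_\lambda\ge 0\text{ in }\mathcal{C}_\lambda\text{ for all }\tau<\lambda\le\mu\}$ and assume by contradiction $\mu_\star<\infty$; continuity of the moving planes yields $W_{\mu_\star}\ge 0$, the strong maximum principle for $-\mathrm{div}(t^{1-2s}\nabla\cdot)$ with the nonnegative base reaction upgrades it to $W_{\mu_\star}>0$ in the interior of $\mathcal{C}_{\mu_\star}$, and perturbing the energy estimate produces $W_{\mu_\star+\varepsilon}\ge 0$ for small $\varepsilon>0$, contradicting maximality. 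Hence $\mu_\star=\infty$, and reading the conclusion back on the trace $U(\cdot,0)=u$ delivers the desired monotonicity of $u$ in the $x_1$-direction.
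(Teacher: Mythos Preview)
Your outline follows the standard moving-planes template, but the paper's proof takes a fundamentally different route at precisely the point you flag as ``the hard part'': it does \emph{not} work directly with $U=E_s[u]$. Instead it first applies the fractional Kelvin transform $v(x)=|x|^{-(N-2s)}u(x/|x|^2)$ and runs the moving planes on the extension $V=E_s[v]$. This is not a cosmetic choice. The Kelvin transform converts the merely bounded function $u$ into a function $v$ that decays like $|x|^{-(N-2s)}$ at infinity and hence lies in $L^{2_s^*}(\mathbb{R}_+^N\setminus B_r)$ for every $r>0$; the resulting key inequality (Lemma~\ref{integralDG}) carries the integrable weight $|x|^{-2N}$ on each $\Upsilon_\rho=\{x_1<\rho\}$ with $\rho<0$, and the integral tends to zero as $\rho\to-\infty$. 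That is exactly what furnishes the starting position. The paper then proceeds in two stages: first sliding $\rho$ up to $0$, and afterwards moving the \emph{centre} $P_\mu$ of the Kelvin transform to push the monotonicity into the region $x_1>0$.

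Your proposed starting step, by contrast, requires $\|c_\mu\|_{L^{N/(2s)}(\Omega_\mu^-)}$ to be small for $\mu$ near $\tau$, where $c_\mu\sim u^{r-1}$ is only known to be bounded and $\Omega_\mu^-$ sits inside a half-space that is unbounded in the $x_2,\ldots,x_N$ directions regardless of how close $\mu$ is to $\tau$. There is no narrow-region mechanism available: the slab is thin only in $x_1$, and nothing prevents $\Omega_\mu^-$ from having infinite measure. The Hopf-type corner behaviour near $\Gamma^*(\tau)$ that you invoke is a purely local statement and cannot control the putative negativity set at spatial infinity. Moreover, since $u$ is assumed only in $H^s_{\mathrm{loc}}$, the function $W_\mu^-$ is not a priori an admissible global test function; the paper handles this via cut-offs $\eta_\varepsilon$ around the singular point and at infinity, which again relies on the $L^{2_s^*}$ decay provided by the Kelvin transform. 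The reference \cite{DG} you cite uses the Kelvin transform for the same reason. In short, the missing ingredient is the Kelvin transform together with the two-stage sliding argument; without it the initial step of your scheme does not go through.
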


\begin{remark}
We make the proof assuming $\tau=0$. For $\tau\neq0$ the proof is analogous through a translation with respect to  the variable $x_1$.
\end{remark}

The proof of Theorem \ref{thmonotonia} is based on the method of moving planes introduced by Alexandrov and first exploited
in the context of Partial Differential Equations by J. Serrin \cite{Serr}, see also \cite{GLN} for more details.

\smallskip

 Let us introduce some notation in order to apply the moving planes method.
 We denote by   $\mathbb{R}_{++}^{N+1}\equiv\mathbb{R}_{+}^{N}\times \mathbb{R}_{+}$, i.e., the set of points $X=(x,y)$ with
 $x=(x_1,\ldots,x_N)$ and $x_N, y>0$. For a fixed $\rho\in \mathbb{R}$,  we define the sets
$$\Upsilon_{\rho}=\{x\in\mathbb{R}_{+}^{N}:\ x_1<\rho\},\quad \Upsilon_{\rho}^*=\Upsilon_{\rho}\times \mathbb{R}_+,$$
$$T_{\rho}=\{X\in\overline{\mathbb{R}_{++}^{N+1}}:\ x_1=\rho\}.$$
For any $X\in\mathbb{R}_{++}^{N+1}$ the reflection with respect to the hyperplane  $T_{\rho}$ is denoted by
$$X^{\rho}=(x^{\rho},y)=X+2(\rho-x_1)e_1=(2\rho-x_1, x_2,\ldots,x_N,y).$$
Let us define the point $O_{\rho}=(2\rho,0,\ldots,0,0)\in \mathbb{R}^{N+1}$, whose reflection is the origin, and $o_{\rho}=(2\rho,0,\ldots, 0)\in \mathbb{R}^N$.
We also recall that the Kelvin transform of a nontrivial point $x\in \mathbb{R}^N$ is given by $\mathcal{K}(x)=\frac{x}{|x|^2}$. It is easy to see that $\mathcal{K}(\mathbb{R}_+^N)=\mathbb{R}_+^N$ and $\mathcal{K}\left(\Upsilon_{\rho}^*\right)=(\mathbb{R}_{++}^{N+1})\cap B_{\frac{1}{-4\rho}}(O_{\frac{1}{4\rho}})$ for any $\rho<0$.
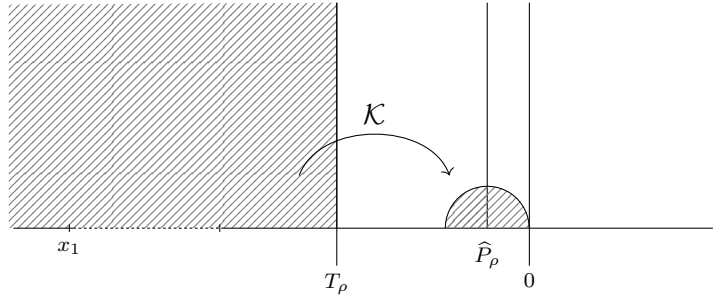
\begin{figure}[!ht]
\begin{center}
\begin{tikzpicture}[scale=1]
[line cap=round,line join=round,>=triangle 45,x=1.0cm,y=1.0cm]
\draw[fill=gray, draw=none,pattern=north east lines, pattern color= gray] (-4.360242255065743,2.9744989159594843) -- (0.,3.) -- (0.,0.) -- (-4.360242255065743,0.) -- cycle;
\path[->, bend left=70]  (-0.5,0.7) edge node[auto] {$\mathcal{K}$} (1.5,0.7);
\begin{scriptsize}
\draw (0.,3.)-- (0.,-0.5) node[below] {$T_{\rho}$};
\draw (-1.5,0.)-- (5.,0.);
\draw (2.56,3.)-- (2.56,-0.5) node[below] {$0$};
\draw (2.,3.)-- (2.,0.) node[below=2] {$\widehat{P}_{\rho}$};
\draw [dash pattern=on 1pt off 1pt] (-1.5,0.)-- (-3.5,0.); 
\draw (-3.5,0.)-- (-4.3,0.);
\draw  (0.,3.)-- (0.,0.);
\draw (-1.55556,-0.05) rectangle (-1.555552,0.05) node[below=3.5] {$$};
\draw (-3.55556,-0.05) rectangle (-3.555552,0.05) node[below=3.5] {$x_{1}$};
\draw [fill=gray,pattern=north east lines, pattern color= gray,shift={(2.,0.)}] plot[domain=0:3.141592653589793,variable=\t]({1.*0.5578767958337079*cos(\t r)+0.*0.5578767958337079*sin(\t r)},{0.*0.5578767958337079*cos(\t r)+1.*0.5578767958337079*sin(\t r)});
\fill [gray,pattern=north east lines, pattern color= gray] (2.,0.) -- (1.4338273018014687,0.) -- (2.,0.5578767958337079) -- cycle;
\end{scriptsize}
\end{tikzpicture}
  \caption{The Kelvin Transform acting on the set $\Upsilon_{\rho}^*$, with $\rho <0$.}
\end{center}
\end{figure}
Next, we follow an approach similar to the one in \cite{BCdPS} based on the fractional Kelvin transform, $\mathcal{K}_{s}(u)$, which acts on functions
defined in a subset of $\mathbb{R}^N,$ in the following way:
$$\mathcal{K}_{s}(u)=\frac{1}{|x|^{N-2s}}u\left(\mathcal{K}(x)\right)=\frac{1}{|x|^{N-2s}}u\left(\frac{x}{|x|^2}\right).$$
As it is proven in \cite{BCdPS}, if $(-\Delta)^su=f(u)$, then the action of the fractional laplacian acting on the fractional Kelvin transform of $u$ is given by
\begin{equation*}
(-\Delta)^s\mathcal{K}_{s}(u)=\frac{1}{|x|^{N+2s}}f\left(u(\mathcal{K}(x))\right).
\end{equation*}
Let $u(x)$ be a solution to problem \eqref{prob:mov:abajo} and define $f(t)=t^r$ and $\displaystyle g(t)=\frac{f(t)}{t^{\frac{N+2s}{N-2s}}}$. Then, the Kelvin transform $v=\mathcal{K}_{s}(u)$ satisfies the
following mixed BVP,
\begin{equation*}
        \left\{
        \begin{tabular}{rlr}
        $\displaystyle(-\Delta)^sv=g(|x|^{N-2s}v)v^{\frac{N+2s}{N-2s}}$,& $v>0$, & in $\mathbb{R}_{+}^{N}$, \\
        $v=0\mkern+132.5mu$& &on $\ \Sigma_{\mathcal{D}}(0)$,\\
        $\displaystyle\frac{\partial v}{\partial x_N}=0\mkern+132.5mu$&  &on $\ \Sigma_{\mathcal{N}}(0)$,
        \end{tabular}
        \right.
\end{equation*}
since on $x_N=0$, we have
$$
\frac{\partial v}{\partial x_N}(x)=(2s-N)\frac{x_N}{|x|^{N+2(1-s)}}u\left(\mathcal{K}(x)\right)+\frac{1}{|x|^{N-2s}}\frac{\partial u}{\partial x_N}\left(\mathcal{K}(x)\right)=0.
$$
Moreover, $v$ is a continuous and positive function in
$\mathbb{R}^N\backslash\{0\}$, with a possible singularity at the
origin and decays at infinity as $\frac{1}{|x|^{N-2s}}u(0)$, thus
$v\in L^{2_s^*}\cap L^{\infty}(\mathbb{R}_+^N\backslash B_r(0))$ for any $r>0$.
Finally, we consider $V=E_s[v]$ the extension function of the Kelvin transform
$v=\mathcal{K}_{s}(u)$ and the corresponding extension problem,
\begin{equation}\label{extension_kelvin}
        \left\{
       \begin{array}{rlcl}
        \displaystyle   -\text{div}(y^{1-2s}\nabla V)&\!\!\!\!=0  & & \mbox{ in }\mathbb{R}_{++}^{N+1}\subset\mathbb{R}_{+}^{N+1} , \\
        \displaystyle B(V)&\!\!\!\!=0   & & \mbox{ on } \left(\Sigma_{\mathcal{D}}(0)\cup\Sigma_{\mathcal{N}}(0)\right)\times\mathbb{R}_{+} , \\
         \displaystyle \frac{\partial U}{\partial \nu^s}&\!\!\!\!=g(|x|^{N-2s}v)v^{\frac{N+2s}{N-2s}}& &  \mbox{ on } \Omega\times\{y=0\} .
        \end{array}
        \right.
\end{equation}
Observe that, since $v\in L^{2_s^*}(\mathbb{R}_+^N\backslash B_r(0))$ for any $r>0$ and the extension operator $E_s$ is an isometry, by \cite{FKS}, the extension function $V\in L^{\overline{2}^*}(\Upsilon_{\rho}^*,y^{1-2s}dX)$ for any $\rho<0$, where $\overline{2}^*=\frac{2(N+1)}{N-1}$ denotes to the Sobolev conjugate exponent in dimension $N+1$.\newline
The following lemma, which extends to our fractional framework \cite[Lemma 2.1]{DG}, provides us with a key-point inequality in order to obtain monotonicity in the $x_1$-direction for the function $V$ defined in \eqref{extension_kelvin}.

\smallskip

Here we use the notation $V_\rho(X)=V(X^\rho)$ and $v_\rho(x)=v(x^\rho)$ for the reflected functions that are singular at the point $O_{\rho}$ and $o_\rho$ respectively. Moreover we denote by  $\mathcal{A}_{\rho}=\{x\in \Upsilon_{\rho}\backslash O_{\rho}: v\geq v_{\rho}\}$.

\begin{lemma}\label{integralDG}
Assume that $u\in H_{loc}^s(\mathbb{R}_{+}^{N})\cap\mathcal{C}^0(\overline{\mathbb{R}_{+}^{N}})$ is a weak solution of \eqref{prob:mov:abajo} and let $v=\mathcal{K}_{s}(u)$.
Then, for any $\rho<0$, 
$(v-v_{\rho})^+\!\!\in\! H_{\Sigma_{\mathcal{D}}}^s(\Upsilon_{\rho})\cap L^{\infty}(\Upsilon_{\rho})$. Moreover, there exists $C_{\rho}>0$, increasing with respect to  $\rho$, such that
\begin{equation}\label{ineq:lemmaDG}
\int_{\Upsilon_{\rho}^*}y^{1-2s}|\nabla(V-V_{\rho})^+|^2dxdy\leq
C_{\rho}\left(\int_{\mathcal{A}_{\rho}}\frac{1}{|x|^{2N}}dx\right)^{\frac{2s}{N}}\int_{\Upsilon_{\rho}^*}y^{1-2s}|\nabla(V-V_{\rho})^+|^2dxdy.
\end{equation}
\end{lemma}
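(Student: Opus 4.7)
The plan is to use $(V-V_\rho)^+$ as a test function in the difference of the extension equations for $V$ and $V_\rho$, bound the resulting boundary term pointwise using a negative-power estimate on $v$, and finally apply H\"older's inequality together with the fractional trace inequality (Lemma \ref{lem:traceineq}) to absorb a Dirichlet-energy factor on the right.

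First I would verify the admissibility claim $(v-v_\rho)^+\in H_{\Sigma_{\mathcal D}}^s(\Upsilon_\rho)\cap L^\infty(\Upsilon_\rho)$. Because $\rho<0$ forces $|x|\geq|\rho|$ on $\Upsilon_\rho$, the Kelvin image $\mathcal{K}(x)$ lies in the compact set $\overline{B_{1/|\rho|}(0)}\cap\overline{\mathbb{R}_+^N}$, on which the continuous function $u$ is bounded by some $M_\rho>0$. Hence
\begin{equation*}
v(x)=\frac{u(\mathcal{K}(x))}{|x|^{N-2s}}\leq\frac{M_\rho}{|x|^{N-2s}},\qquad x\in\Upsilon_\rho,
\end{equation*}
so $v\in L^\infty(\Upsilon_\rho)$. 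The reflected function $v_\rho$ blows up at its singular point $o_\rho\in\partial\Upsilon_\rho$, so $(v-v_\rho)^+$ vanishes in a neighborhood of $o_\rho$. It also vanishes on $\Sigma_{\mathcal{D}}(0)\cap\overline{\Upsilon_\rho}$ (where $v=0\leq v_\rho$) and on the reflection plane $T_\rho\cap\{y=0\}$ (where $v=v_\rho$), which combined with the local fractional regularity of $v$ and $v_\rho$ gives the claimed Sobolev membership and turns $(V-V_\rho)^+$ into an admissible test function on $\Upsilon_\rho^*$.

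Setting $\beta:=(N+2s)-r(N-2s)>0$, so that the boundary condition in \eqref{extension_kelvin} reads $\partial V/\partial\nu^s=v^r/|x|^\beta$, I would subtract the weak formulations for $V$ and $V_\rho$ and test against $(V-V_\rho)^+$ to obtain
\begin{equation*}
\kappa_s\int_{\Upsilon_\rho^*}y^{1-2s}|\nabla(V-V_\rho)^+|^2\,dx\,dy=\int_{\mathcal{A}_\rho}\left[\frac{v^r}{|x|^\beta}-\frac{v_\rho^r}{|x^\rho|^\beta}\right](v-v_\rho)\,dx.
\end{equation*}
For $\rho<0$ and $x\in\Upsilon_\rho$ one checks $|x^\rho|\leq|x|$, so on $\mathcal{A}_\rho$ the bracket is bounded by $(v^r-v_\rho^r)/|x|^\beta$. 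The mean value theorem with $r>1$ combined with the bound on $v$ then yields
\begin{equation*}
\frac{v^r-v_\rho^r}{|x|^\beta}\leq\frac{rv^{r-1}}{|x|^\beta}(v-v_\rho)\leq\frac{rM_\rho^{r-1}}{|x|^{4s}}(v-v_\rho),
\end{equation*}
since $\beta+(N-2s)(r-1)=4s$.

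Finally, H\"older's inequality with conjugate exponents $N/(2s)$ and $N/(N-2s)$ gives
\begin{equation*}
\int_{\mathcal{A}_\rho}\frac{(v-v_\rho)^{+2}}{|x|^{4s}}\,dx\leq\left(\int_{\mathcal{A}_\rho}\frac{dx}{|x|^{2N}}\right)^{\!\!2s/N}\left(\int_{\Upsilon_\rho}(v-v_\rho)^{+2_s^*}\,dx\right)^{\!\!2/2_s^*},
\end{equation*}
and Lemma \ref{lem:traceineq} applied to $(V-V_\rho)^+$ controls the last factor by a constant multiple of $\int y^{1-2s}|\nabla(V-V_\rho)^+|^2\,dx\,dy$. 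Assembling the estimates yields \eqref{ineq:lemmaDG} with $C_\rho=rM_\rho^{r-1}C/\kappa_s$; since $M_\rho$ is a supremum over a set that enlarges as $\rho\nearrow 0$, $C_\rho$ is monotone increasing in $\rho$. The main obstacle I foresee is the admissibility step: rigorously placing $(V-V_\rho)^+$ in the weighted Sobolev space on the \emph{unbounded} cylinder $\Upsilon_\rho^*$ and invoking Lemma \ref{lem:traceineq} in that setting, which will likely require a truncation/approximation argument exploiting the $L^{\overline{2}^*}(\Upsilon_\rho^*,y^{1-2s})$-integrability of $V$ and $V_\rho$ noted after \eqref{extension_kelvin}.
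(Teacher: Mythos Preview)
Your strategy coincides with the paper's: subtract the extended equations, test against $(V-V_\rho)^+$, bound the boundary integrand via the mean-value inequality and the decay $v(x)\le M_\rho|x|^{-(N-2s)}$, then close with H\"older (exponents $N/2s$ and $2_s^*/2$) and the trace inequality. Your direct computation with $v^r/|x|^\beta$ is in fact the same estimate the paper obtains through its auxiliary function $g(t)=t^{r-(N+2s)/(N-2s)}$.

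The one substantive point is the admissibility step you flag at the end. The paper does \emph{not} argue, as you do, that $(v-v_\rho)^+$ vanishes near $o_\rho$; indeed your premise ``$v_\rho$ blows up at $o_\rho$'' need not hold, since $v_\rho(x)=|x^\rho|^{-(N-2s)}u(\mathcal K(x^\rho))$ and nothing prevents $u$ from decaying at infinity. Instead the paper carries out exactly the truncation you anticipate: it tests with $\varphi_\varepsilon=\eta_\varepsilon^2(V-V_\rho)^+$ where $\eta_\varepsilon$ is a smooth cutoff that vanishes both in $B_\varepsilon(O_\rho)$ and outside $B_{2/\varepsilon}(O_\rho)$, and then shows the error term
\[
I_\varepsilon=\kappa_s\int_{\Upsilon_\rho^*}y^{1-2s}[(V-V_\rho)^+]^2|\nabla\eta_\varepsilon|^2\,dx\,dy
\]
tends to zero by H\"older with exponents $\overline 2^{\,*}/2$ and $(N+1)/2$, using precisely the $L^{\overline 2^{\,*}}(\Upsilon_\rho^*,y^{1-2s})$-integrability you mention. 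So your plan is correct, but the cutoff is needed near $O_\rho$ as well as at infinity, and this truncation is the technical heart of the argument rather than a detail to be deferred.
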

\begin{proof}
Since for a given $\rho<0$ there exists $r>0$ such that $\Upsilon_{\rho}\subset \mathbb{R}_+^N\backslash B_r(0)$, the functions $v$ and $(v-v_{\rho})^+\leq v$ belong to $L^{2_s^*} (\Upsilon_{\rho}) \cap L^{\infty}(\Upsilon_{\rho})$ and the function $\frac{1}{|x|^{2N}}$ is integrable in $\Upsilon_{\rho}$. The assertion $(v-v_{\rho})^+\!\!\in\! H_{\Sigma_{\mathcal{D}}}^s(\Upsilon_{\rho})$ follows from \eqref{ineq:lemmaDG} taking in mind that the extension operator $E_s$ is an isometry.
To prove inequality \eqref{ineq:lemmaDG} we test conveniently the equations
\begin{equation*}
(-\Delta)^sv=g(|x|^{N-2s}v)v^{\frac{N+2s}{N-2s}},\quad (-\Delta)^sv_{\rho}=g(|x^{\rho}|^{N-2s}v_{\rho})v_{\rho}^{\frac{N+2s}{N-2s}},
\end{equation*}
in the set $\Upsilon_{\rho}\backslash O_{\rho}$.
At this point, we make full use of the extension technique, so that we consider the extension functions $V=E_s[v]$ and $V_\rho=E_s[v_{\rho}]=V(X^{\rho})$ and we set the nonnegative function $\varphi=\varphi_{\varepsilon}=\eta_{\varepsilon}^{2}(V-V_{\rho})^{+}$ as a test function in the corresponding extended problem for a convenient function $\eta_{\varepsilon}$. More precisely, for $\varepsilon>0$ small enough we take $\eta_\varepsilon \in \mathcal{C}_0^1(\mathbb{R}^{N+1})$ with $0\leq\eta_\varepsilon\leq 1$ and such that:

 $$
\left\{ \begin{array}{ll}
 \eta_\varepsilon\equiv 1 \qquad & \mbox{  for } \displaystyle2\varepsilon\leq|X-O_{\rho}|\leq\frac{1}{\varepsilon}\\[1.5 ex]
 \eta_\varepsilon\equiv 0 &\mbox{ for } |X-O_{\rho}|\leq\varepsilon \quad \mbox{ or  } \quad \displaystyle\frac{2}{\varepsilon}\leq|X-O_{\rho}|, \\[1.5 ex]
 \displaystyle|\nabla\eta_\varepsilon|\leq\frac{c}{\varepsilon} &\mbox{ for } \varepsilon<|X-O_{\rho}|<2\varepsilon \\[1.5 ex]
 \displaystyle|\nabla\eta_\varepsilon|\leq c \varepsilon &\mbox{ for } \displaystyle\frac{1}{\varepsilon}<|X-O_{\rho}|<\frac{2}{\varepsilon}.
 \end{array}
\right. $$

Observe that in the set $\Upsilon_{\rho}^*$ the function $(V-V_{\rho})^+$ vanishes where the Dirichlet condition holds for $V$ but also where the Dirichlet condition holds for the reflected function and, therefore, it is allowed to take $\varphi=\eta_{\varepsilon}^{2}(V-V_{\rho})^{+}$ as a test function in the corresponding extended problem.
\begin{figure}[!ht]
\begin{center}
\begin{tikzpicture}[scale=0.63][line cap=round,line join=round,>=triangle 45,x=1.0cm,y=1.0cm]
\path[->, bend left=70]  (5,0.7) edge node[auto] {$\mathcal{K}$} (6.5,0.7); 
\draw [line width=0.7pt,dotted] (-4.,0.)-- (0.,0.);
\draw [line width=0.7pt] (0.,0.)-- (4.,0.);
\draw [line width=0.7pt,dotted] (7.,0.)-- (11.,0.);
\draw [line width=0.7pt,dotted, color=red] (8.05,0.)-- (9.5,0.);
\draw [line width=0.7pt] (11.,0.)-- (15.,0.);
\draw [line width=0.7pt,dotted] (8.,-1.)-- (15.,-1.);
\draw [line width=0.7pt,dotted, color=red] (8.,-1.)-- (9.5,-1.);
\draw [line width=0.7pt] (7.,-1.)-- (8.,-1.);
\draw [line width=0.7pt] (0.,2.)-- (0.,0.);
\draw [line width=0.7pt] (11.,0.)-- (11.,2.);
\draw [line width=0.7pt,dash pattern=on 4pt off 4pt] (8.,2.)-- (8.,-1.);
\draw [line width=0.7pt,dash pattern=on 4pt off 4pt] (9.5,-1.)-- (9.5,2.);
\begin{scriptsize}
\draw [fill=black,shift={(-4.,0.)},rotate=90] (0,0) ++(0 pt,3.75pt) -- ++(3.2475952641916446pt,-5.625pt)--++(-6.495190528383289pt,0 pt) -- ++(3.2475952641916446pt,5.625pt);
\draw [fill=black] (0.,0.) circle (1.0pt);
\draw[color=black] (0,-0.3) node {$0$};
\draw[color=black] (-2,-0.4) node {$\Sigma_{\mathcal{D}}(0)$};
\draw [fill=black,shift={(4.,0.)},rotate=270] (0,0) ++(0 pt,3.75pt) -- ++(3.2475952641916446pt,-5.625pt)--++(-6.495190528383289pt,0 pt) -- ++(3.2475952641916446pt,5.625pt);
\draw[color=black] (4.166438738303165,0.38559990184332893) node {$x_1$};
\draw[color=black] (2.,-0.4) node {$\Sigma_{\mathcal{N}}(0)$};
\draw [fill=black,shift={(7.,0.)},rotate=90] (0,0) ++(0 pt,3.75pt) -- ++(3.2475952641916446pt,-5.625pt)--++(-6.495190528383289pt,0 pt) -- ++(3.2475952641916446pt,5.625pt);
\draw [fill=black] (11,0.) circle (0.5pt);
\draw[color=black] (11,-0.3) node {$0$};
\draw [fill=black,shift={(15.,0.)},rotate=270] (0,0) ++(0 pt,3.75pt) -- ++(3.2475952641916446pt,-5.625pt)--++(-6.495190528383289pt,0 pt) -- ++(3.2475952641916446pt,5.625pt);
\draw[color=black] (15.166760756965113,0.38559990184332893) node {$x_1$};
\draw [fill=black] (8.,-1.) circle (0.5pt);
\draw[color=black] (8.120747041595465,-1.3) node {$2\rho$};
\draw [fill=black,shift={(15.,-1.)},rotate=270] (0,0) ++(0 pt,3.75pt) -- ++(3.2475952641916446pt,-5.625pt)--++(-6.495190528383289pt,0 pt) -- ++(3.2475952641916446pt,5.625pt);
\draw [fill=black,shift={(7.,-1.)},rotate=90] (0,0) ++(0 pt,3.75pt) -- ++(3.2475952641916446pt,-5.625pt)--++(-6.495190528383289pt,0 pt) -- ++(3.2475952641916446pt,5.625pt);
\draw [fill=black,shift={(0.,2.)}] (0,0) ++(0 pt,3.75pt) -- ++(3.2475952641916446pt,-5.625pt)--++(-6.495190528383289pt,0 pt) -- ++(3.2475952641916446pt,5.625pt);
\draw [fill=black,shift={(11.,2.)}] (0,0) ++(0 pt,3.75pt) -- ++(3.2475952641916446pt,-5.625pt)--++(-6.495190528383289pt,0 pt) -- ++(3.2475952641916446pt,5.625pt);
\draw [fill=black] (11,-1.) circle (0.5pt);
\draw[color=black] (11,-1.3) node {$0$};
\draw [fill=black] (9.5,-1.) circle (0.5pt);
\draw[color=black] (9.623896634207656,-1.3) node {$\rho$};
\draw[color=black] (15.4,-0.2) node {$v$};
\draw[color=black] (15.5,-1.3) node {$v_{\rho}$};
\end{scriptsize}
\end{tikzpicture}
 \caption{The Kelvin transform centered at $0$ acting on $\Sigma_{\mathcal{D}}(0)$ (doted line) and $\Sigma_{\mathcal{N}}(0)$ for the functions $v$ and $v_{\rho}$.}
\end{center}
\end{figure}
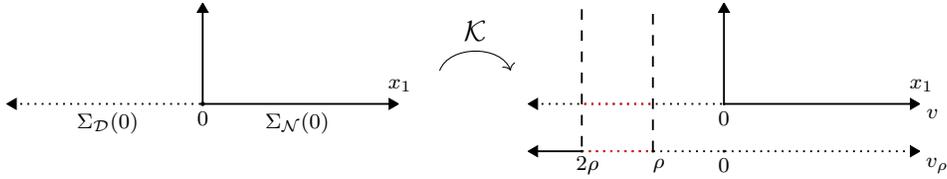
Thus, using the definition of weak solution for the extended problem satisfied by $V$ and $V_{\rho}$ respectively and subtracting those expressions, we obtain
\begin{equation*}
\kappa_s\int_{\Upsilon_\rho^*}y^{1-2s}\nabla(V-V_{\rho})\nabla\varphi\, dxdy=\!\!
\int_{\Upsilon_\rho}\!\!\left(g(|x|^{N-2s}v)v^{\frac{N+2s}{N-2s}}-g(|x^{\rho}|^{N-2s}v_{\rho})v_{\rho}^{\frac{N+2s}{N-2s}}\right)\varphi(x,0)dx.
\end{equation*}
On the other hand,
\begin{align*}
\mkern+40mu\kappa_s&\mkern-55mu\int \limits_{\Upsilon_\rho^*\cap[2\varepsilon\leq|X-O_\rho|\leq\frac{1}{\varepsilon}]}\mkern-50mu y^{1-2s}|\nabla(V-V_{\rho})^+|^2dxdy\leq \kappa_s\int_{\Upsilon_\rho^*}y^{1-2s}|\nabla (\eta_{\varepsilon}(V-V_{\rho})^{+}) |^2dxdy
\\
&=\kappa_s\int_{\Upsilon_\rho^*}y^{1-2s}\nabla(V-V_{\rho})\nabla\varphi\, dxdy+\kappa_s\int_{\Upsilon_\rho^*}y^{1-2s}[(V-V_{\rho})^+]^2|\nabla\eta_{\varepsilon}|^2dxdy
\\
&= \kappa_s\int_{\Upsilon_\rho^*}y^{1-2s}\nabla(V-V_{\rho})\nabla\varphi\, dxdy+I_{\varepsilon}\\
&=\int_{\Upsilon_\rho}\left(g(|x|^{N-2s}v)v^{\frac{N+2s}{N-2s}}-g(|x^{\rho}|^{N-2s}v_{\rho})v_{\rho}^{\frac{N+2s}{N-2s}}\right)\varphi(x,0)dx+I_{\varepsilon}.
\end{align*}
Since $g$ is a nonincreasing function, $|x|\geq|x^{\rho}|$ in $\Upsilon_\rho$ and $v\geq v_{\rho}$ in the set where $\varphi(\cdot,0)\neq0$, it follows that $-g(|x^{\rho}|^{N-2s}v_{\rho})\leq-g(|x|^{N-2s}v)$ and therefore,
\begin{equation}\label{DG1}
\begin{split}
\mkern+40mu\kappa_s\mkern-55mu\int\limits_{\Upsilon_\rho^*\cap[2\varepsilon\leq|X-O_{\rho}|\leq\frac{1}{\varepsilon}]}\mkern-50muy^{1-2s}|\nabla(V-V_{\rho})^+|^2dxdy &\leq
\int_{\Upsilon_\rho}g(|x|^{N-2s}v)\left(v^{\frac{N+2s}{N-2s}}-v_{\rho}^{\frac{N+2s}{N-2s}}\right)\varphi(x,0)dx+I_{\varepsilon}\\
&\leq\int_{\mathcal{A}_{\rho}}g(|x|^{N-2s}v)\left(v^{\frac{N+2s}{N-2s}}-v_{\rho}^{\frac{N+2s}{N-2s}}\right)\varphi(x,0)dx+I_{\varepsilon}.
\end{split}
\end{equation}
Now, if $0\leq v_{\rho}\leq v$ from the Mean Value Theorem, we find
\begin{equation*}
v^{\frac{N+2s}{N-2s}}-v_{\rho}^{\frac{N+2s}{N-2s}}\leq \frac{N+2s}{N-2s} v^{\frac{4s}{N-2s}}(v-v_{\rho}).
\end{equation*}
Now using that $f(t)=t^r$ with $1<r<\frac{N+2s}{N-2s}$, it follows that
$$g(t)t^{\frac{4s}{N-2s}}=\frac{f(t)}{t^{\frac{N+2s}{N-2s}}}t^{\frac{4s}{N-2s}}=\frac{f(t)}{t}=t^{r-1},$$
and $g(t)t^{\frac{4s}{N-2s}}$ is bounded in any interval $(0,t_0)$. Moreover, since $|x|^{N-2s}v(x)=u\left(\frac{x}{|x|^2}\right)$ is bounded from above for $x\in\Upsilon_{\rho}$ and $\rho<0$, we conclude
\begin{align*}
g(|x|^{N-2s}v)\left(v^{\frac{N+2s}{N-2s}}-v_{\rho}^{\frac{N+2s}{N-2s}}\right)&\leq \frac{N+2s}{N-2s} g(|x|^{N-2s}v) v^{\frac{4s}{N-2s}}(v-v_{\rho})\\
&\leq \frac{N+2s}{N-2s} \frac{g(|x|^{N-2s}v) (|x|^{N-2s}v)^{\frac{4s}{N-2s}}}{|x|^{4s}}(v-v_{\rho})\\
&\leq \widetilde{C}_{\rho} \frac{1}{|x|^{4s}}(v-v_{\rho}),
\end{align*}
for a positive constant $\widetilde{C}_{\rho}$ increasing in $\rho$. Then, inequality \eqref{DG1} takes the form
\begin{align*}
\mkern+40mu\kappa_s\mkern-55mu\int\limits_{\Upsilon_{\rho}^*\cap[2\varepsilon\leq|X-O_{\rho}|\leq\frac{1}{\varepsilon}]} \mkern-50mu y^{1-2s}|\nabla(V-V_{\rho})^+|^2dxdy
&\leq \widetilde{C}_{\rho}\int_{\mathcal{A}_{\rho}}\frac{1}{|x|^{4s}}(v-v_{\rho})\varphi(x,0)dx+I_{\varepsilon}
\\
&\leq \widetilde{C}_{\rho}\int_{\mathcal{A}_{\rho}}\frac{1}{|x|^{4s}}\eta_\varepsilon^{2}(x,0)[(v-v_{\rho})^+]^2dx+I_{\varepsilon}
\\
&\leq \widetilde{C}_{\rho}\int_{\mathcal{A}_{\rho}}\frac{1}{|x|^{4s}}[(v-v_{\rho})^+]^2dx+I_{\varepsilon}.
\end{align*}
Using H\"older's inequality with $p=\frac{N}{2s}$ and $q=\frac{2_s^{*}}{2}$ we conclude
\begin{equation*}
\mkern+40mu\kappa_s\mkern-55mu\int \limits_{\Upsilon_{\rho}^*\cap[2\varepsilon\leq|X-O_{\rho}|\leq\frac{1}{\varepsilon}]}\mkern-50muy^{1-2s}|\nabla(V-V_{\rho})^+|^2dxdy \leq \widetilde{C}_{\rho}\left(\int_{\mathcal{A}_{\rho}}\frac{1}{|x|^{2N}}dx\right)^{\frac{2s}{N}}\left(\int_{\Upsilon_{\rho}}[(v-v_{\rho})^+]^{2_s^{*}}dx\right)^{\frac{2}{2_s^{*}}}\!\!\!+ I_\varepsilon.
\end{equation*}
Next, we focus on the term $\displaystyle I_{\varepsilon}=\int_{\Upsilon_\rho^*}y^{1-2s}[(V-V_{\rho})^+]^2|\nabla\eta_{\varepsilon}|^2dxdy$. Define the set
\begin{equation*}
\mathcal{W}_{\varepsilon}=\left\{X\in \Upsilon_{\rho}^*:\ \varepsilon<|X-O_{\rho}|<2\varepsilon \textrm{ or } \frac{1}{\varepsilon}<|X-O_{\rho}|<\frac{2}{\varepsilon}\right\},
\end{equation*}
so that $supp(|\nabla\eta_{\epsilon}|^2)\subseteq \overline{\mathcal{W}_{\varepsilon}}$.
Since $\bigg||\nabla\eta_{\varepsilon}|^{N+1} \chi_{\mathcal{W}_{\varepsilon}}\bigg|\leq c(\frac{1}{\varepsilon^{N+1}}\varepsilon^{N+1}+\varepsilon^{N+1}\frac{1}{\varepsilon^{N+1}})=c'$ and $(V-V_{\rho})^+\in L^{\overline{2}^*}(\Upsilon_{\rho}^*,y^{1-2s} dxdy)$, applying H\"older's inequality with $p=\frac{N+1}{2}$ and $q=\frac{\overline{2}^*}{2}$, we find
\begin{equation*}
\begin{split}
I_{\varepsilon}&\leq\left(\int_{\mathcal{W}_{\varepsilon}}y^{1-2s}[(V-V_{\rho})^+]^{\overline{2}^*}dxdy\right)^{\frac{2}{\overline{2}^*}}\left(\int_{\mathcal{W}_{\varepsilon}}y^{1-2s}|\nabla\eta_{\varepsilon}|^{N+1}dxdy\right)^{\frac{2}{N+1}}
\\
&\leq C\left(\int_{\mathcal{W}_{\varepsilon}}y^{1-2s}[(V-V_{\rho})^+]^{\overline{2}^*}dxdy\right)^{\frac{2}{\overline{2}^*}}\rightarrow 0 \textrm{ as $\varepsilon\rightarrow 0$}.
\end{split}
\end{equation*}
Therefore, applying the trace inequality \eqref{eq:traceineq}, we conclude
\begin{equation*}
\begin{split}
\int_{\Upsilon_{\rho}^*}y^{1-2s}  |\nabla(V-V_{\rho})^+|^2dxdy
&\leq \kappa_s^{-1}\widetilde{C}_{\rho}\left(\int_{\mathcal{A}_{\rho}}\frac{1}{|x|^{2N}}dx\right)^{\frac{2s}{N}}\left(\int_{\Upsilon_{\rho}}[(v-v_{\rho})^+]^{2_s^{*}}dx\right)^{\frac{2}{2_s^{*}}}
\\
&\leq C_{\rho}\left(\int_{\mathcal{A}_{\rho}}\frac{1}{|x|^{2N}}dx\right)^{\frac{2s}{N}}\int_{\Upsilon_{\rho}^*}y^{1-2s}|\nabla(V-V_{\rho})^+|^2dxdy,
\end{split}
\end{equation*}
for a positive constant $C_{\rho}$ increasing with respect to $\rho$.
\end{proof}

\begin{proof}[Proof of Theorem \ref{thmonotonia}]
The proof follows the lines of \cite[Proposition 2.1]{DG} adapted to our framework. First, we establish a starting plane that delimits a hyperspace in which the monotonicity in the $x_1$-direction holds. Next we   extend to such a region progressively until we reach the half-space, and in a second step, to the whole  space having a special care to the singularity of the Kelvin transform at the origin. Since
\begin{equation*}
\int_{\mathcal{A}_{\rho}}\frac{1}{|x|^{2N}}dx\leq\int_{\Upsilon_{\rho}}\frac{1}{|x|^{2N}}dx\rightarrow 0,\ \text{as}\ \rho\rightarrow-\infty,
\end{equation*}
then there exists $-\infty<\rho_0<0$ such that
\begin{equation*}
C_{\rho}\left(\int_{\mathcal{A}_{\rho}}\frac{1}{|x|^{2N}}dx\right)^{\frac{2s}{N}}<1,\quad\mbox{for all}\quad \rho\in (-\infty,\rho_0).
\end{equation*}
From   \eqref{ineq:lemmaDG} we deduce that $(V-V_{\rho})^+\equiv0$ in $\Upsilon_{\rho}^*$, and therefore $V\leq V_{\rho}$ in $\Upsilon_{\rho}^*$ for all $\rho\in (-\infty,\rho_0)$. Consequently $v\leq v_{\rho}$ in $\Upsilon_{\rho}$ for any $\rho\in (-\infty,\rho_0)$.

Assume now that $\rho_0<0$ is maximal. By the Maximum Principle, $v<v_{\rho_0}$ in $\Upsilon_{\rho_0}$. Then $\chi_{\mathcal{A}_{\rho}}\cdot\frac{1}{|x|^{2N}}\rightarrow 0$ point-wisely as $\rho\rightarrow\rho_0$ in $\mathbb{R}_+^N\backslash\{T_{\rho_0}\cup \{O_{\rho_0}\}\}$.

Thus, if $\rho<\rho_0+\delta<0$ then $\chi_{\mathcal{A}_{\rho}}\cdot\frac{1}{|x|^{2N}}\leq \chi_{\Upsilon_{\rho_0+\delta}}\cdot\frac{1}{|x|^{2N}}\in L^1(\mathbb{R}^N_+)$ so that applying the Dominated Convergence Theorem
\begin{equation*}
\int_{\mathcal{A}_{\rho}}\frac{1}{|x|^{2N}}dx\to 0,\quad\mbox{ as }\rho\to\rho_0,
\end{equation*}
and we conclude
\begin{equation*}
C_{\rho}\left(\int_{\mathcal{A}_{\rho}}\frac{1}{|x|^{2N}}dx\right)^{\frac{2s}{N}}<1, \ \forall\rho\in(\rho_0,\rho_0+\delta),
\end{equation*}
for some $\delta>0$ sufficiently small. Therefore $(V-V_{\rho})^+\equiv0$ in $\Upsilon_{\rho}^*$ for $\rho\in(\rho_0,\rho_0+\delta)$ in contradiction with the maximality of $\rho_0$. As a consequence $V<V_{\rho}$ in $\Upsilon_{\rho}^*$ provided $\rho<0$ and by continuity $V\leq V_0$ in $\Upsilon_0^*$, so that $v\leq v_{0}$ in $\Upsilon_0$. Noticing that $|x|=|x^{\rho}|$ for $\rho=0$ we conclude $u\leq u_0$ in $\Upsilon_0$.\newline
The above argument works for the Kelvin transform centered at a point $P=P_{\mu}=(\mu,0,\ldots,0)\in \mathbb{R}_{+}^N$, namely, $v^{\mu}(x)=\frac{1}{|x|^{N-2s}}u(P_{\mu}+\frac{x}{|x|^2})$ with $\mu\leq 0$ (see Figure \ref{fig:kelvinmu}).
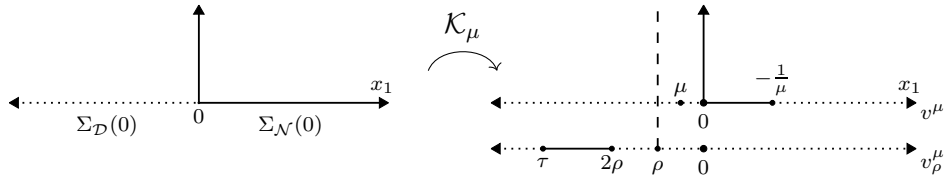
\begin{figure}[!ht]
\begin{center}
\begin{tikzpicture}[scale=0.61][line cap=round,line join=round,>=triangle 45,x=1.0cm,y=1.0cm]
\path[->, bend left=70]  (5,0.7) edge node[auto] {$\mathcal{K}_{\mu}$} (6.5,0.7); 
\draw [line width=0.7pt,dotted] (-4.,0.)-- (0.,0.);
\draw [line width=0.7pt] (0.,0.)-- (4.,0.);
\draw [line width=0.7pt] (0.,0.)-- (0.,2.);
\draw [line width=0.7pt,dotted] (6.5,0.)-- (11.,0.);
\draw [line width=0.7pt] (11.,0.)-- (11.,2.);
\draw [line width=0.7pt,dotted] (11.,-1.)-- (15.5,-1.);
\draw [line width=0.7pt] (11.,0.)-- (12.5,0.);
\draw [line width=0.7pt,dotted] (12.5,0.)-- (15.5,0.);
\draw [line width=0.7pt,dotted] (6.5,-1.)-- (7.5,-1.);
\draw [line width=0.7pt] (7.5,-1.)-- (9.,-1.);
\draw [line width=0.7pt,dotted] (9.,-1.)-- (11.,-1.);
\draw [line width=0.7pt,dash pattern=on 4pt off 4pt] (10.,-1.)-- (10.,2.);
\begin{scriptsize}
\draw [fill=black,shift={(-4.,0.)},rotate=90] (0,0) ++(0 pt,3.75pt) -- ++(3.2475952641916446pt,-5.625pt)--++(-6.495190528383289pt,0 pt) -- ++(3.2475952641916446pt,5.625pt);
\draw [fill=black] (0,0) circle (0.5pt);
\draw[color=black] (0,-0.3) node {$0$};
\draw[color=black] (-2,-0.5) node {$\Sigma_{\mathcal{D}}(0)$};
\draw [fill=black,shift={(4.,0.)},rotate=270] (0,0) ++(0 pt,3.75pt) -- ++(3.2475952641916446pt,-5.625pt)--++(-6.495190528383289pt,0 pt) -- ++(3.2475952641916446pt,5.625pt);
\draw[color=black] (4,0.35) node {$x_1$};
\draw[color=black] (2,-0.5) node {$\Sigma_{\mathcal{N}}(0)$};
\draw [fill=black,shift={(0.,2.)}] (0,0) ++(0 pt,3.75pt) -- ++(3.2475952641916446pt,-5.625pt)--++(-6.495190528383289pt,0 pt) -- ++(3.2475952641916446pt,5.625pt);
\draw [fill=black,shift={(6.5,0.)},rotate=90] (0,0) ++(0 pt,3.75pt) -- ++(3.2475952641916446pt,-5.625pt)--++(-6.495190528383289pt,0 pt) -- ++(3.2475952641916446pt,5.625pt);
\draw [fill=black] (11,0) circle (2.0pt);
\draw[color=black] (11,-0.4) node {$0$};
\draw [fill=black,shift={(15.5,0.)},rotate=270] (0,0) ++(0 pt,3.75pt) -- ++(3.2475952641916446pt,-5.625pt)--++(-6.495190528383289pt,0 pt) -- ++(3.2475952641916446pt,5.625pt);
\draw[color=black] (15.5,0.35) node {$x_1$};
\draw [fill=black,shift={(11.,2.)}] (0,0) ++(0 pt,3.75pt) -- ++(3.2475952641916446pt,-5.625pt)--++(-6.495190528383289pt,0 pt) -- ++(3.2475952641916446pt,5.625pt);
\draw [fill=black,shift={(6.5,-1.)},rotate=90] (0,0) ++(0 pt,3.75pt) -- ++(3.2475952641916446pt,-5.625pt)--++(-6.495190528383289pt,0 pt) -- ++(3.2475952641916446pt,5.625pt);
\draw [fill=black] (11,-1) circle (2.0pt);
\draw[color=black] (11,-1.4) node {$0$};
\draw [fill=black,shift={(15.5,-1.)},rotate=270] (0,0) ++(0 pt,3.75pt) -- ++(3.2475952641916446pt,-5.625pt)--++(-6.495190528383289pt,0 pt) -- ++(3.2475952641916446pt,5.625pt);
\draw [fill=black] (10.5,0.) circle (1.5pt);
\draw[color=black] (10.5,0.3) node {$\mu$};
\draw [fill=black] (12.5,0.) circle (1.5pt);
\draw[color=black] (12.5,0.5) node {{\scriptsize $-\frac{1}{\mu}$}};
\draw [fill=black] (7.5,-1.) circle (1.5pt);
\draw[color=black] (7.5,-1.3) node {$\tau$};
\draw [fill=black] (9.,-1.) circle (1.5pt);
\draw[color=black] (9,-1.35) node {$2\rho$};
\draw [fill=black] (10.,-1.) circle (1.5pt);
\draw[color=black] (10.,-1.4) node {$\rho$};
\draw[color=black] (16,-0.2) node {$v^{\mu}$};
\draw[color=black] (16,-1.3) node {$v_{\rho}^{\mu}$};
\end{scriptsize}
\end{tikzpicture}
  \caption{The Kelvin transform centered at $\mu\leq0$ acting on $\Sigma_{\mathcal{D}}(0)$ (doted line) and $\Sigma_{\mathcal{N}}(0)$ for the functions $v^{\mu}$ and $v_{\rho}^{\mu}$. The set $\Sigma_{\mathcal{N}}(0)$ is transformed into those $x\in\mathbb{R}_+^N$ such that $0<x_1<-\frac{1}{\mu}$, so $v_{\rho}^{\mu}$ satisfies a Neumann condition on $\tau<x_1<2\rho$ with $\tau=2\rho+\frac{1}{\mu}$. }\label{fig:kelvinmu}
\end{center}
\end{figure}
This centered fractional Kelvin transform $v^{\mu}$ satisfies a Dirichlet condition in the part of the boundary with
$x_N=0$ and $x_1<0$ so we can prove as before that for any $\rho<0$ the inequality $v^{\mu}\leq v_{\rho}^{\mu}$ holds in $\Upsilon_{\rho}$.
Since $\rho<0$ is arbitrary, it follows that $v^{\mu}\leq v_{0}^{\mu}$ in $\Upsilon_{0}$. Thus $u\leq u_{\mu}$ in $\Upsilon_{\mu}$ for $\mu\leq0$, so $u$ is nondecreasing in the $x_1$-direction provided $x_1<0$.

Now we extend progressively the region in which the monotonicity holds reaching $\Upsilon_{\mu}$ for $\mu>0$.
First, observe that we cannot continue as before due to the singularity of the Kelvin transform at the origin: we cannot take a moving plane starting at $\rho=-\infty$ since for $\rho$ large there are points where the Neumann boundary condition holds (and the solution is positive) which are reflected to the Dirichlet part of the boundary. In terms of the test functions, for $\rho$ large enough  the function $\left(V-V_{\rho}\right)^+$ is not allowed to be chosen as  test function for the problem satisfied by the reflected function $V_{\rho}$ , since it does not vanish at those  points of the boundary where the Dirichlet condition for $V_{\rho}$ holds.

Nevertheless, an inequality similar to \eqref{ineq:lemmaDG} holds for $(v^{\mu}-v_{\rho}^{\mu})^+$ if $\rho$ is close to $0$ so that we extend the inequality $v^{\mu}(x)<v_{\rho}^{\mu}(x)=v^{\mu}(x_{\rho})$ for every $\rho<0$ fixed, moving $\mu$ from $\mu=0$ where the strict inequality is true up to $\mu=\frac{-1}{2\rho}$.\newline
If $\mu\geq 0$, the fractional Kelvin transform centered at the point $P_{\mu}$  (denoted by $v^{\mu}(x)$)   satisfies a Dirichlet boundary condition at points $x\in\mathbb{R}_{+}^{N}$ with $x_N=0$ and $\frac{-1}{\mu}<x_1<0$ ($x_1<0$ if $\mu=0$ as in the previous step) and a Neumann condition on the remaining part of the boundary. Then, if $-\frac{1}{2\mu}<\rho<0$ it follows that $V^{\mu}$, and hence $(V^{\mu}-V_{\rho}^{\mu})^+$, vanishes where the Dirichlet condition holds for $V^{\mu}$ and  also where the Dirichlet condition holds for the reflected function $V_{\rho}^{\mu}$ (therefore $\varphi_{\varepsilon}$ is an allowed test function).
\begin{figure}[!ht]
\begin{center}
\begin{tikzpicture}[scale=0.69][line cap=round,line join=round,>=triangle 45,x=1.0cm,y=1.0cm]
\path[->, bend left=70]  (4.5,0.7) edge node[auto] {$\mathcal{K}_{\mu}$} (6,0.7);
\draw [line width=0.7pt,dotted] (-4.,0.)-- (0.,0.);
\draw [line width=0.7pt] (0.,0.)-- (4.,0.);
\draw [line width=0.7pt] (0.,0.)-- (0.,2.);
\draw [line width=0.7pt] (11.,0.)-- (13.,0.);
\draw [line width=0.7pt] (11.,0.)-- (11.,2.);
\draw [line width=0.7pt] (6.,0.)-- (7.,0.);
\draw [line width=0.7pt,dotted] (7.,0.)-- (11.,0.);
\draw [line width=0.7pt] (8,-1.)-- (6.,-1.);
\draw [line width=0.7pt,dotted] (8,-1.)-- (12.,-1.);
\draw [line width=0.7pt] (12.,-1.)-- (13.,-1.);
\draw [line width=0.7pt,dash pattern=on 4pt off 4pt] (9.5,-1)-- (9.5,2.);
\begin{scriptsize}
\draw [fill=black,shift={(-4.,0.)},rotate=90] (0,0) ++(0 pt,3.75pt) -- ++(3.2475952641916446pt,-5.625pt)--++(-6.495190528383289pt,0 pt) -- ++(3.2475952641916446pt,5.625pt);
\draw [fill=black] (0,0) circle (0.5pt);
\draw[color=black] (0,-0.3) node {$0$};
\draw[color=black] (-2,-0.4) node {$\Sigma_{\mathcal{D}}(0)$};
\draw[color=black] (2,-0.4) node {$\Sigma_{\mathcal{N}}(0)$};
\draw [fill=black,shift={(4.,0.)},rotate=270] (0,0) ++(0 pt,3.75pt) -- ++(3.2475952641916446pt,-5.625pt)--++(-6.495190528383289pt,0 pt) -- ++(3.2475952641916446pt,5.625pt);
\draw[color=black] (4.1,0.3) node {$x_1$};
\draw [fill=black,shift={(0.,2.)}] (0,0) ++(0 pt,3.75pt) -- ++(3.2475952641916446pt,-5.625pt)--++(-6.495190528383289pt,0 pt) -- ++(3.2475952641916446pt,5.625pt);
\draw [fill=black] (11,0) circle (1.5pt);
\draw[color=black] (11,-0.3) node {$0$};
\draw [fill=black,shift={(13.,0.)},rotate=270] (0,0) ++(0 pt,3.75pt) -- ++(3.2475952641916446pt,-5.625pt)--++(-6.495190528383289pt,0 pt) -- ++(3.2475952641916446pt,5.625pt);
\draw[color=black] (13.1,0.3) node {$x_1$};
\draw [fill=black,shift={(11.,2.)}] (0,0) ++(0 pt,3.75pt) -- ++(3.2475952641916446pt,-5.625pt)--++(-6.495190528383289pt,0 pt) -- ++(3.2475952641916446pt,5.625pt);
\draw [fill=black,shift={(6.,0.)},rotate=90] (0,0) ++(0 pt,3.75pt) -- ++(3.2475952641916446pt,-5.625pt)--++(-6.495190528383289pt,0 pt) -- ++(3.2475952641916446pt,5.625pt);
\draw [fill=black] (7.,0.) circle (1.5pt);
\draw[color=black] (7.,0.5) node {{\scriptsize $-\frac{1}{\mu}$}};
\draw [fill=black] (11.25,0.) circle (1.5pt);
\draw[color=black] (11.3,0.3) node {$\mu$};
\draw [fill=black] (8,-1.) circle (1.5pt);
\draw[color=black] (8,-1.3) node {$2\rho$};
\draw [fill=black,shift={(6.,-1.)},rotate=90] (0,0) ++(0 pt,3.75pt) -- ++(3.2475952641916446pt,-5.625pt)--++(-6.495190528383289pt,0 pt) -- ++(3.2475952641916446pt,5.625pt);
\draw [fill=black] (9.,0.) circle (1.5pt);
\draw[color=black] (9.,0.5) node {{\scriptsize $-\frac{1}{2\mu}$}};
\draw [fill=black] (12.,-1.) circle (1.5pt);
\draw[color=black] (12.,-1.3) node {$\tau$};
\draw [fill=black,shift={(13.,-1.)},rotate=270] (0,0) ++(0 pt,3.75pt) -- ++(3.2475952641916446pt,-5.625pt)--++(-6.495190528383289pt,0 pt) -- ++(3.2475952641916446pt,5.625pt);
\draw [fill=black] (9.5,-1.) circle (1.5pt);
\draw[color=black] (9.5,-1.3) node {$\rho$};
\draw [fill=black] (11.,-1.) circle (1.5pt);
\draw[color=black] (11.,-1.3) node {$0$};
\draw[color=black] (13.5,-0.2) node {$v^{\mu}$};
\draw[color=black] (13.5,-1.3) node {$v_{\rho}^{\mu}$};
\end{scriptsize}
\end{tikzpicture}
\caption{The Kelvin transform centered at $\mu\geq0$ acting on $\Sigma_{\mathcal{D}}(0)$ (doted line) and $\Sigma_{\mathcal{N}}(0)$ for the functions $v^{\mu}$ and $v_{\rho}^{\mu}$. The set $\Sigma_{\mathcal{D}}(0)$ is transformed into the $x\in\mathbb{R}_+^N$ such that $x_N=0$ and $-\frac{1}{\mu}<x_1<0$, so the reflected function $v_{\rho}^{\mu}$ satisfies a Dirichlet condition on $2\rho<x_1<\tau$ with $\tau=2\rho+\frac{1}{\mu}$. It follows that for $x\in\Upsilon_{\rho}$ the function $v^{\mu}$ vanish where the Dirichlet condition holds for $v_{\rho}^{\mu}$. }
\end{center}
\end{figure}
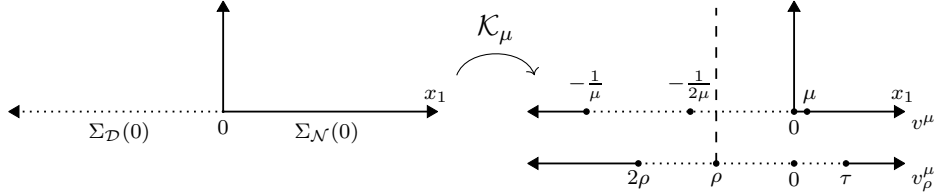
Thus, proceeding exactly as in the case $\mu=0$, we obtain
\begin{equation*}
\int_{\Upsilon_{\rho}^*}y^{1-2s}|\nabla(V^{\mu}-V_{\rho}^{\mu})^+|^2dxdy\leq
 C_{\rho}\left(\int_{\mathcal{A}_{\rho}^{\mu}}\frac{1}{|x|^{2N}}dx\right)^{\frac{2s}{N}}\int_{\Upsilon_{\rho}^*}y^{1-2s}|\nabla(V^{\mu}-V_{\rho}^{\mu})^+|^2dxdy,
\end{equation*}
where $C_{\rho}$ is increasing with respect to $\rho$ and $\mathcal{A}_{\rho}^{\mu}=\{x\in \Upsilon_{\rho}\backslash O_{\rho}: v^{\mu}\geq v_{\rho}^{\mu}\}$.\newline
If we now fix $\rho<0$ the previous estimate holds for any $\mu\in(0,-\frac{1}{2\rho})$ and, since $\frac{1}{|x|^{2N}}\in L^{1}(\Upsilon_{\rho})$, applying the Dominated Convergence Theorem we conclude $\chi_{\mathcal{A}_{\rho}^{\mu}}\cdot\frac{1}{|x|^{2N}}\to 0$ as $\mu\to 0$ in $\mathbb{R}^{N}\backslash\{T_{\rho}\cup P_{\rho}\}$, we recall that $P_{\rho}=(2\rho,0,\ldots,0)$ is the reflected point of the origin, which is the singular point of every transform $V^{\mu}$. As a consequence
\begin{equation*}
C_{\rho}\left(\int_{\mathcal{A}_{\rho}^{\mu}}\frac{1}{|x|^{2N}}dx\right)^{\frac{2s}{N}}<1,
\end{equation*}
for some $\rho_0\in(\frac{-1}{2\mu},0)$ and the monotonicity follows. Finally, suppose that $\mu_0<-\frac{1}{2\rho_0}$ is maximal such that $v^{\mu}\leq v_{\rho}^{\mu}$ in $\Upsilon_{\rho}$ for all $0<\mu<\mu_0$. Then, by the maximum principle, $v^{\mu}< v_{\rho}^{\mu}$ and hence $\mathcal{A}_{\rho}^{\mu}\to\emptyset$ as $\mu\to\mu_0$. Thus, there exists $\epsilon>0$ such that
\begin{equation*}
C_{\rho}\left(\int_{\mathcal{A}_{\rho}^{\mu}}\frac{1}{|x|^{2N}}dx\right)^{\frac{2s}{N}}<1\ \text{for}\ \mu\in(\mu_0,\mu_0+\epsilon).
\end{equation*}
We conclude that $v^{\mu}\leq v_{\rho}^{\mu}$ for $\mu>\mu_0$ and close to $\mu_0$ in contradiction with the maximality of $\mu_0$.
\newline
In sum, for every $\rho<0$ and $\mu\leq-\frac{1}{2\rho}$ we have $v^{\mu}\leq v_{\rho}^{\mu}$ in $\Upsilon_{\rho}$ or, equivalently, fixed $\mu>0$ the inequality holds for every $-\frac{1}{2\mu}<\rho<0$. Letting $\rho\to0$ we get $v^{\mu}\leq v_0^{\mu}$ in $\Upsilon_{0}$, i.e.,
 $v^{\mu}(x_1,x')\leq v^{\mu}(-x_1,x')$
 for all $x$ with $x_1<0$, so that $u\leq u_{\mu}$ in $\Upsilon_{\mu}$ with $\mu>0$. Since $\mu>0$ is arbitrary we get that $u$ is nondecreasing in the $x_1$-direction in whole $\mathbb{R}_{+}^{N}$.
\end{proof}

\begin{remark}
Let us observe that the method described in the above Theorem  in the $x_1$-direction  may be applied to any other direction $x_2,  \ldots, x_{N-1}$, centered at  any point $P$ of the form $P=(0, P_2, \dots, P_{N-1}, 0)$, with a hyperplane orthogonal to both to the $e_1$ and $e_n$ directions.
Thus, due to the arbitrary of the point $P$,  we can deduce that $u$ does not depend to the $x_2, \dots, x_{N-1}$ variables.
\end{remark}
\section{A priori bounds in $L^\infty(\Omega)$.}
In this section we prove Theorem \ref{acotacion} exploiting the   blow-up method by Guidas-Spruck (see \cite{GS}). To this aim  we will make use of the estimates proved in \cite[Theorem 1.1]{CCLO} that guarantee the compactness needed in order to accomplish this limit step.
Then, with the same ideas, we prove Theorem 1.3 using  the uniform estimates proved in \cite[Corollary 1.1]{CCLO} for the moving boundary conditions (as in hypotheses $(B_1)$-$(B_3)$).
\begin{proof}[Proof of Theorem \ref{acotacion}]
We argue by contradiction:  set  $\Lambda>0$ given by Theorem \ref{Thuno} and assume that  there exists sequences $\{\lambda_k\}\subset [0,\Lambda]$, $\{u_k\}$ of solutions to problems $(P_{\lambda_k})$ and $\{p_k\}\subset\overline{\Omega}$ of points verifying
\begin{equation*}
M_{k}= \sup\limits_{x \in \overline{\Omega}} \ u_{k}(x) = u_{k}(p_{k}) \rightarrow+\infty,\quad \text{as}\ k\rightarrow\infty.
\end{equation*}
Let us set $\mu_k=M_k^{-\frac{r-1}{2s}}$ and define the functions $v_k(y)=\frac{1}{M_k}u(p_k+\mu_k y)$. Note that $v_k(y)$ is defined in $\Omega_{k}=\frac{1}{\mu_k}\,\left(\Omega-p_k\right)$ as well as $v_k(0)=1$ and $\|v_k\|_{L^\infty(\Omega_k)}\leq1$ for all $k\geq0$. Moreover, the scaled
function $v_k$ satisfies the problem
\begin{equation*}
        \left\{
        \begin{tabular}{rll}
        $\displaystyle (-\Delta)^sv_k=\lambda_k M_k^{q-r}v_k^q+v_k^r$ \quad  &$v_k>0$, &in $\Omega_{k}=\frac{1}{\mu_k}\left(\Omega-p_k\right)$, \\
        $ v_k=0\mkern+116mu $ & &on $ \Sigma_{\mathcal{D}}^k$,\\
        $\displaystyle\frac{\partial v_k}{\partial \nu}=0\mkern+116mu $ & &on $ \Sigma_{\mathcal{N}}^k$,
        \end{tabular}
        \right.
\end{equation*}
where $\Sigma_{\mathcal{D}}^k$ and $\Sigma_{\mathcal{N}}^k$ are the transformed boundary manifolds.

Now we study the limit problem obtained as $k\to\infty$. To carry out this step we need some compactness properties for the sequence $\{v_k\}$ in order to guarantee the convergence in some sense.
By \cite[Theorem 1.1]{CCLO} the sequence $\{v_k\}$ is uniformly bounded in ${C^\gamma(\overline{\Omega_k})}$ for some $\gamma\in \left(0,\frac12\right)$.
Then, by the Ascoli-Arzel\'a Theorem, there exists a subsequence $\{v_k\}$  uniformly convergent over compact sets in  $\overline{\mathbb{R}_+^N}$
to a function $v\in C^{\eta}(\overline{\mathbb{R}_+^N})$ for some $0<\eta<\gamma<\frac12$. Moreover $\|v\|_{L^\infty(\mathbb{R}^N)}\leq 1$ and $v(0)=1$.\newline
On the other hand, the problem satisfied by the limit function $v$ depends on the position of the point $\displaystyle p=\lim_{k\to\infty}p_k$.  Let us set
\begin{equation*}
d_k^{\mathcal{D}}=dist(p_k,\Sigma_{\mathcal{D}}^k)\quad\mbox{and}\quad d_k^{\mathcal{N}}=dist(p_k,\Sigma_{\mathcal{N}}^k).
\end{equation*}
and define $d_k^{\Omega}=\min\{d_k^{\mathcal{D}},d_k^{\mathcal{N}}\}$. We distinguish several cases according to the behavior of the sequences $\frac{d_k^i}{\mu_k}$ with $i=\Omega, \mathcal{D}, \mathcal{N}$.

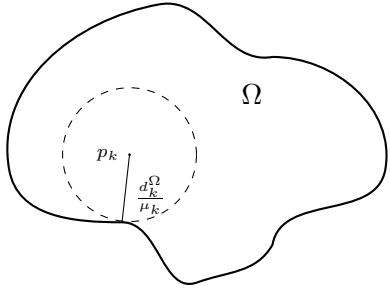
\begin{figure}[!ht]
  \begin{center}
    \begin{tikzpicture}[scale=1]
    \draw [thick] (1,1.5)
      to [out=190,in=10] (-0.5,2.2)
      to [out=190,in=100] (-2.5,0)
      to [out=-80,in=180] (-1,-0.7)
      to [out=0,in=200]   (0,-1.5)
      to [out=20,in=-120] (1,-1)
      to [out=80,in=-100] (2.5,0)
      to [out=80,in=0]  (1,1.5) ;
 \draw [dashed] (-0.9,0.2) circle [radius=0.89];
 \draw [fill] (-0.9,0.2) circle [radius=0.01] node [black,left] {{\tiny $p_k$}};
 \draw [fill] (1,1)
  node [black,left] {{$\Omega$}};
  \draw (-1,-0.7) -- (-0.9,0.2) node [black,right, pos=.4] {{\tiny $\frac{d_k^{\Omega}}{\mu_k}$}};
   \end{tikzpicture}
       \caption{The relevant geometry after dilation of variables lies in a neighbourhood of $p_k$ such as the one of the picture.}\label{fig:dilatacion}
  \end{center}
\end{figure}
1. \textbf{Interior case:} $\left\{\frac{d_k^{\Omega}}{\mu_k}\right\}\rightarrow+\infty$.\newline
Since $B_{d_k^{\Omega}/\mu_k}(0)\subset\Omega_{k}$ (see Figure
\ref{fig:dilatacion}) we have that $\Omega_k\to\mathbb{R}^N$ and the limit function $v$ is a positive bounded solution to
\begin{equation*}
(-\Delta)^s v=v^r \quad\mbox{in }\mathbb{R}^N,
\end{equation*}
Then, by \cite[Theorem 1]{CLO} (see also \cite[Theorem 3.1]{BCdPS})we conclude $v\equiv0$, in contradiction with $v(0)=1$.

2. \textbf{Boundary Cases:} $\left\{\frac{d_k^{\Omega}}{\mu_k}\right\}\rightarrow d^{\Omega}\in \mathbb{R}^+.$\newline
In this situation we have several possibilities:
\begin{itemize}
\item[2.1] \underline{Dirichlet Case}: $\left\{\frac{d_k^{\mathcal{D}}}{\mu_k}\right\}\rightarrow d^{\mathcal{D}} \in \mathbb{R}^+ $ and $\left\{\frac{d_k^{\mathcal{N}}}{\mu_k}\right\}\rightarrow +\infty$.\newline
Now, as $\Sigma_{\mathcal{D}}$ is a $(N-1)$-dimensional smooth manifold, we have that, up to a rotation
\begin{equation*}
\Omega_{k}\to \Omega_{d^{\mathcal{D}}}\equiv \{x\in\mathbb{R}^N:x_N>-d^{\mathcal{D}}\},
\end{equation*}
and the limit function $v$ is a positive solution to
\begin{equation*}
\left\{
   \begin{tabular}{rl}
    $(-\Delta)^s v=v^r$  & in $\Omega_{d^{\mathcal{D}}}\ $,\\
    $v=0\mkern+7.5mu $& in $\{x_N=-d^{\mathcal{D}}\}$,
   \end{tabular}
\right.
\end{equation*}
with  $\|v\|_{L^\infty(\Omega_{d^{\mathcal{D}}})}\leq 1$ and
$v(0)=1$. Thus,  if $d^{\mathcal{D}}=0$ we have a
contradiction with the continuity since $v(0)=1$ while if
$d^{\mathcal{D}}>0$ we have a contradiction with \cite[Theorem 3.4]{BCdPS}

\item[2.2] \underline{Neumann case}: $\left\{\frac{d_k^{\mathcal{D}}}{\mu_k}\right\}\rightarrow +\infty$ and $\left\{\frac{d_k^{\mathcal{N}}}{\mu_k}\right\}\rightarrow d^{\mathcal{N}} \in \mathbb{R}^+ $.\newline
As before, since $\Sigma_{\mathcal{N}}$ is a $(N-1)$-dimensional smooth manifold, we have that, up to rotation,
\begin{equation*}
\Omega_{k}\to\Omega_{d^{\mathcal{N}}}\equiv\{x\in\mathbb{R}^N:x_N>-d^{\mathcal{N}}\},
\end{equation*}
and the limit function $v$ is a positive solution to
\begin{equation*}
\left\{
    \begin{tabular}{rl}
     $(-\Delta)^s v=v^r$  & in $\Omega_{d^{\mathcal{N}}}$, \\
     $\frac{\partial v}{\partial x_N}=0\mkern+8mu $& in $\{x_N=-d^{\mathcal{N}}\},$
    \end{tabular}
\right.
\end{equation*}
with $\|v\|_{L^\infty(\Omega_{d^{\mathcal{N}}})}\leq 1$ and
$v(0)=1$. Then, if we define the translated function
$w(x)=v(x_1,x_2,\ldots,x_N+d^{\mathcal{N}})$ it follows that
\begin{equation*}
\left\{
   \begin{tabular}{rl}
    $(-\Delta)^s w=w^r$  & in $\mathbb{R}_+^N$, \\
    $\frac{\partial w}{\partial x_N}=0\mkern+12mu $& in $\{x_N=0\}$,
   \end{tabular}
\right.
\end{equation*}
with $\|w\|_{L^\infty(\mathbb{R}_+^N)}\leq 1$ and $w(0,0,\ldots,d^{\mathcal{N}})=1$. Extending to the whole space by reflection through the hyperplane $\{x_N=0\}$, thanks to  \cite[Theorem 3.1]{BCdPS}, it follows that $w\equiv0$ and we get a contradiction with $w(0,0,\ldots,d^{\mathcal{N}})=1$.

\item[2.3] \underline{Interphase Case}:  $\left\{\frac{d_k^{\mathcal{D}}}{\mu_k}\right\}\rightarrow d^{\mathcal{D}}\in \mathbb{R}^+ $ and $\left\{\frac{d_k^{\mathcal{N}}}{\mu_k}\right\}\rightarrow d^{\mathcal{N}}\in \mathbb{R}^+$.\newline
Let us set $d^{\Omega}=\min\{d^{\mathcal{D}},d^{\mathcal{N}}\} \geq 0$ and note that $\Sigma_{\mathcal{D}}^k$, $\Sigma_{\mathcal{N}}^k$ and $\Gamma_k=\Sigma_{\mathcal{D}}^k\cap\overline{{\Sigma}_{\mathcal{N}}^k}$ are smooth manifolds by hypotheses $(\mathfrak{B})$. Hence, we can assume that, up to a rotation,
\begin{equation*}
\Omega_{k}\to\Omega_{d^{\Omega}}\equiv\{x\in\mathbb{R}^N:x_N>-d^{\Omega}\},
\end{equation*}
and  the interphase $\Gamma_{k}\to\{x_1=\tau\}$ for some finite 
$\tau\in\mathbb{R}$. Then   the limit function $v$ is a positive solution to
\begin{equation*}
\left\{
   \begin{tabular}{rl}
    $(-\Delta)^s v=v^r$  & in $\Omega_{d^{\Omega}}$, \\
    $v=0\mkern+7.5mu $& in $\{x_N=-d^{\Omega}\}\cap\{x_1\leq \tau\},$\\
    $\frac{\partial v}{\partial x_N}=0\mkern+7.5mu $& in $\{x_N=-d^{\Omega}\}\cap\{x_1>\tau\},$
   \end{tabular}
\right.
\end{equation*}
with $\|v\|_{L^\infty(\Omega_{d^\Omega})}\leq1$ and $v(0)=1$.
\begin{itemize}
\item[1)] If $d^{\Omega}=0$ and $\tau\geq0$ we get a contradiction with the continuity of $v$, since the maximum is achieved at a point on the Dirichlet boundary where $v\equiv0$.

\item[2)] If $d^{\Omega}>0$ and $\tau\geq0$ we get a contradiction with the monotonicity (Theorem~\ref{thmonotonia}) and the Hopf Lemma at the   maximum  point. Indeed  it is sufficient  to have   the monotonicity of the solution $v$ with respect to   the $x_1$-direction up to $x_1=\tau$.

\item[3)] If   $\tau<0$, we reach, once again, a contradiction with the monotonicity and the Hopf Lemma at the point of maximum. In this step it is necessary to use the monotonicity of $v$ with respect to the  $x_1$-direction in the whole space.
\end{itemize}
\end{itemize}
\end{proof}

With the same ideas, we can prove the next result concerning the moving boundary conditions.

\begin{proof}[Proof of Theorem \ref{acotacion_unif}]
As we did in Theorem \ref{acotacion}, we argue by contradiction. Assume that there exists a sequence
$\{u_{\alpha}\}_{\alpha\in I_{\varepsilon}}$ of solutions to problems $(P_{\alpha,\lambda})$,
a sequence of points $\{p_\alpha\}\subset\overline{\Omega}$, $\overline{\alpha}\in I_{\varepsilon}$
and a sequence of numbers $\mu_{\alpha}=M_{\alpha}^{\frac{1-r}{2s}}$ verifying
\begin{equation*}
M_{\alpha}= \sup\limits_{x \in \overline{\Omega}} \ u_{\alpha}(x) = u_{\alpha}(p_{\alpha}) \rightarrow+\infty,\ \text{as}\ \alpha\rightarrow\overline{\alpha}\,.
\end{equation*}

We have to distinguish  several cases. The interior, Dirichlet and Neumann cases can be proved following  the corresponding cases in  Theorem \ref{acotacion}.

As far as the interface case is concerned,  we need some compactness for the sequence $\{u_{\alpha}\}$ as $\alpha\to\overline{\alpha}$. Since we are considering sets $\Sigma_{\mathcal{D}}(\alpha)$ with $\alpha\in I_{\varepsilon}=[\varepsilon,|\partial\Omega|]$ for some $\varepsilon>0$ and satisfying hypotheses $(\mathfrak{B}_{\alpha})$ and $(B_1)$-$(B_3)$, by  \cite[Corollary 1.1]{CCLO} the sequence $\{u_{\alpha}\}$ is uniformly bounded in $C^\gamma(\overline{\Omega})$ for some $\gamma\in \left(0,\frac12\right)$ and so the conclusion follows as in the corresponding case in Theorem \ref{acotacion}.
\end{proof}
\section{Minimal and mountain-pass solutions}
We devote this section to the proof of Theorem \ref{Thuno}. To do so, we make full use of the extension technique. We recall that in terms of the $s$-extension, problem \eqref{problema_abajo} can be reformulated as
\begin{equation}\label{eq:funcional}
        \left\{
        \begin{array}{rlcl}
        \displaystyle -\text{div}(y^{1-2s}\nabla U)&\!\!\!\!=0  & & \mbox{ in } \mathcal{C}_{\Omega} , \\
        \displaystyle B(U)&\!\!\!\!=0   & & \mbox{ on } \partial_L\mathcal{C}_{\Omega} , \\
        U&\!\!\!\!>0   & &\mbox{ on } \Omega\times\{y=0\},
        \\
         \displaystyle \frac{\partial U}{\partial \nu^s}&\!\!\!\!= f_\lambda(U) & &  \mbox{ on } \Omega\times\{y=0\},
         \end{array}
         \right.
         \tag{$P_{\lambda}^*$}
\end{equation}
where $f_\lambda(s)=\lambda |s|^{q-1}s+|s|^{r-1}s$. Associated to the problem \eqref{eq:funcional} we consider the Euler-Lagrange functional $J_{\lambda}: H_{\Sigma^*_{\mathcal{D}}}^1(\mathcal{C}_{\Omega},y^{1-2s}dxdy)\to \mathbb{R}$ given by
\begin{equation*}
J_{\lambda}(U)=\frac{\kappa_s}{2}\int_{\mathcal{C}_{\Omega}}y^{1-2s}|\nabla U|^2dxdy-\int_{\Omega}F_{\lambda}(U(x,0))dx,
\end{equation*}
where $F_{\lambda}(s)\equiv\int_{0}^{s}f_{\lambda}(\tau)d\tau$.
Although $J_\lambda$ does not satisfies the Palais-Smale (PS for short) condition,
due to the unboundedness of the cylinder $\mathcal{C}_{\Omega}$, we show the PS condition for the functional $I_\lambda$.
\begin{lemma}\label{PS}
Let $\{u_n\}\subset H_{\Sigma_{\mathcal{D}}}^s(\Omega)$ be a PS sequence, i.e., $I_\lambda(u_n)\to c$ and $I_\lambda '(u_n)\to 0$. Then, there exist a subsequence
(again denoted by) $u_n$ strongly convergent in $H_{\Sigma_{\mathcal{D}}}^s(\Omega)$.
\end{lemma}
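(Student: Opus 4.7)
The plan is the classical three-step argument for subcritical variational problems, adapted to the spectral-fractional mixed boundary setting. The decisive point is that both nonlinearities are strictly subcritical: $q+1\le 2<2_s^{*}$ and $r+1<2_s^{*}$, so the embedding $H_{\Sigma_{\mathcal{D}}}^s(\Omega)\hookrightarrow L^p(\Omega)$ is compact for the exponents that appear in $I_\lambda$, which is what drives everything.

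\textbf{Step 1 (Boundedness).} First I would establish that $\{u_n\}$ is bounded in $H_{\Sigma_{\mathcal{D}}}^s(\Omega)$. Combining $I_\lambda(u_n)=c+o(1)$ with $\langle I_\lambda'(u_n),u_n\rangle=o(\|u_n\|_{H^s})$ I form the difference
\begin{equation*}
I_\lambda(u_n)-\tfrac{1}{r+1}\langle I_\lambda'(u_n),u_n\rangle
=\left(\tfrac{1}{2}-\tfrac{1}{r+1}\right)\|u_n\|_{H^s}^{2}
-\lambda\left(\tfrac{1}{q+1}-\tfrac{1}{r+1}\right)\|u_n\|_{L^{q+1}(\Omega)}^{q+1}.
\end{equation*}
The coefficient $\frac{1}{2}-\frac{1}{r+1}$ is positive since $r>1$. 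By the trace/Sobolev inequality (Lemma \ref{lem:traceineq}) applied with $q+1\le 2<2_s^{*}$, one has $\|u_n\|_{L^{q+1}}^{q+1}\le C\|u_n\|_{H^s}^{q+1}$. Since $q+1\le 2$, this lower order term is absorbed into the quadratic one (when $q=1$ one uses in addition the eigenvalue estimate $\|u_n\|_{L^2}^2\le (\lambda_1^s)^{-1}\|u_n\|_{H^s}^2$, which is why the range $\lambda<\lambda_1^s$ is natural), yielding $\|u_n\|_{H^s}\le C$.

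\textbf{Step 2 (Weak limit and strong convergence in Lebesgue spaces).} Up to a subsequence, $u_n\rightharpoonup u$ in $H_{\Sigma_{\mathcal{D}}}^s(\Omega)$. Since $q+1,\,r+1<2_s^{*}$, the compact embeddings $H_{\Sigma_{\mathcal{D}}}^s(\Omega)\hookrightarrow L^{q+1}(\Omega)$ and $H_{\Sigma_{\mathcal{D}}}^s(\Omega)\hookrightarrow L^{r+1}(\Omega)$ (which follow from the analogous compact embedding on the extended cylinder and the continuity of the trace) give $u_n\to u$ strongly in both Lebesgue spaces.

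\textbf{Step 3 (Strong convergence in $H^s$).} I would then use the identity
\begin{equation*}
\|u_n-u\|_{H^s}^{2}=\langle I_\lambda'(u_n)-I_\lambda'(u),u_n-u\rangle+\lambda\int_{\Omega}\!\bigl(|u_n|^{q-1}u_n-|u|^{q-1}u\bigr)(u_n-u)\,dx+\int_{\Omega}\!\bigl(|u_n|^{r-1}u_n-|u|^{r-1}u\bigr)(u_n-u)\,dx.
\end{equation*}
The first term tends to zero: $\langle I_\lambda'(u_n),u_n-u\rangle\to 0$ since $I_\lambda'(u_n)\to 0$ in $H^{-s}$ and $\{u_n-u\}$ is bounded, and $\langle I_\lambda'(u),u_n-u\rangle\to 0$ by weak convergence. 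The two remaining integrals tend to zero by H\"older's inequality together with the strong $L^{q+1}$ and $L^{r+1}$ convergence from Step 2 (using $\bigl||t|^{p-1}t-|s|^{p-1}s\bigr|\le C(|t|^{p-1}+|s|^{p-1})|t-s|$). Hence $u_n\to u$ strongly in $H_{\Sigma_{\mathcal{D}}}^s(\Omega)$.

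The argument contains no serious obstruction: the only spot requiring care is the boundedness step when $q=1$, where the $L^2$ term is of the same order as $\|u_n\|_{H^s}^2$ and one must invoke either smallness of $\lambda$ or the explicit eigenvalue constant. The strong convergence step is routine precisely because of the strictly subcritical growth of both the concave and the convex nonlinearities.
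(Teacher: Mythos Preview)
Your argument is correct and follows essentially the same route as the paper's own proof, which is very terse: the paper simply asserts boundedness, invokes the compact Sobolev embeddings \eqref{PS1}--\eqref{PS2}, and then says strong convergence follows from $I_\lambda'(u_n)\to 0$. Your Steps 1--3 are a careful elaboration of exactly these three moves; in particular your Step 1 is more precise than the paper, which writes ``Since $I_\lambda(u_n)\to c$ we have that $\|u_n\|\le C$'' without explicitly combining with $\langle I_\lambda'(u_n),u_n\rangle$ as one must.
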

\begin{proof}
Since $I_\lambda(u_n)\to c$ we have that $\| u_n \|_{H_{\Sigma_{\mathcal{D}}}^s(\Omega)}\le C$ uniformly for some positive constant.
By the Sobolev embeddings, there exists a subsequence still denoted by $\{u_n\}$ such that
\begin{equation}\label{PS1}
u_n\to u \quad \mbox{in }L^r(\Omega), \mbox{ for any } 1\le r<2^*_s,
\end{equation}
and
\begin{equation}\label{PS2}
u_n\rightharpoonup u \quad \mbox{in } H_{\Sigma_{\mathcal{D}}}^s(\Omega).
\end{equation}
Using that $I_\lambda'(u_n)\to 0$ together with \eqref{PS1}-\eqref{PS2}, we have the strong convergence proving the PS condition.
\end{proof}
\begin{proof}[Proof of Theorem \ref{Thuno}-(1)]
Consider the eigenvalue problem associated to the first eigenvalue
$\lambda_1^s$, and let $\varphi_1$ be the positive normalized in $L^2(\Omega)$ associated eigenfunction.
Using $\varphi_1$ as a test function in problem
\eqref{problema_abajo}, we have
\begin{equation*}
(\lambda_1^s-\lambda)\int_{\Omega}u\varphi_1dx=\int_{\Omega}u^r\varphi_1dx,
\end{equation*}
and hence necessarily  $\lambda<\lambda_1^s$. On the other hand,
using the fractional Sobolev inequality together with Poincar\'e
inequality we find
\begin{equation*}
\begin{aligned}
 I_{\lambda}(v)&=\frac 12\int_{\Omega}|(-\Delta)^{s/2}v|^2dx-\frac{\lambda}{2}\int_{\Omega} |v|^{2}dx-\frac{1}{r+1}\int_{\Omega}|v|^{r+1} dx \\
               &\geq c_1\left(1-\frac{\lambda}{\lambda_1^s}\right)\int_{\Omega}|(-\Delta)^{s/2}v|^2dx-c_2{\left( \int_{\Omega}|(-\Delta)^{s/2}v|^2dx \right)}^{(r+1)/2},
\end{aligned}
\end{equation*}
for positive constants $c_1,\,c_2$. Therefore, $v=0$ is a local
minimum for $I_\lambda$ and, since $I_\lambda(t v)\to -\infty$ as
$t\to \infty$, the functional $I_{\lambda}$ satisfies the
hypotheses of the Mountain Pass Theorem by Ambrosetti-Rabinowitz
\cite{AR}. Hence, by Lemma \ref{PS}, we obtain the existence of at
least one solution for $0<\lambda<\lambda_1^s$. Even more, the
bifurcation result is a consequence of the classical Rabinowitz
Theorem \cite{R}.
\end{proof}

Next, in order to continue with the proof of Theorem \ref{Thuno}, we establish some preliminary results. Some of these results can be proved for more general nonlinearities $f(u)$, with $f$ at least
continuous,  satisfying the growth condition $0\leq f(s)\leq c(1+|s|^p)$ for some $p>0$. In such  cases we will denote the associated extension problem as $(P_f^*)$.

The first result deals with the sub and supersolutions method, the proof is rather standard and so we omit it.

\begin{lemma}\label{existencia}
Suppose that there exist a subsolution $U_1$ and a supersolution $U_2$ to $(P_f^*)$, i.e., $U_1,U_2 \in H_{\Sigma_{\mathcal{D}}^*}^1(\mathcal{C}_{\Omega},y^{1-2s}dxdy)$ such that $B(U_1)\leq 0$, $B(U_2)\geq 0$ on $\partial_L \mathcal{C}_\Omega$ and for
every nonnegative $\phi \in H_{\Sigma_{\mathcal{D}}^*}^1(\mathcal{C}_{\Omega},y^{1-2s}dxdy)$ the following inequalities are satisfied:
\begin{equation*}
\begin{split}
\kappa_s\int_{\mathcal{C}_{\Omega}}y^{1-2s}\nabla U_1\nabla \phi dxdy&\leq\int_{\Omega}f(U_1(x,0))\phi (x,0)dx\\
\kappa_s\int_{\mathcal{C}_{\Omega}}y^{1-2s}\nabla U_2\nabla \phi dxdy&\geq\int_{\Omega}f(U_2(x,0))\phi (x,0) dx\,,
\end{split}
\end{equation*}
respectively.
Assume moreover that $U_1\leq U_2$ in $\mathcal{C}_{\Omega}$. Then, there  exists a solution $U$ verifying $U_1\leq U \leq U_2$ in $\mathcal{C}_{\Omega}$.
\end{lemma}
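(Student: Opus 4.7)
The plan is to obtain $U$ by the direct method of the calculus of variations applied to a suitably truncated energy functional, which is the standard weak sub--supersolution strategy adapted to the extension problem. Concretely, I would define the pointwise truncation
\[
T(x,s) = \max\bigl\{U_1(x,0),\, \min\{s,\, U_2(x,0)\}\bigr\},
\]
set $g(x,s) = f(T(x,s))$, and let $G(x,s) = \int_0^s g(x,t)\,dt$. Then I would consider the auxiliary functional
\[
\widetilde{J}(U) = \frac{\kappa_s}{2}\int_{\mathcal{C}_\Omega} y^{1-2s}|\nabla U|^2\,dxdy - \int_\Omega G(x, U(x,0))\,dx
\]
on $H_{\Sigma_{\mathcal{D}}^*}^1(\mathcal{C}_\Omega, y^{1-2s}dxdy)$. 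Because $T$ clips every argument into the bounded range $[U_1(x,0), U_2(x,0)]$, the integrand $G$ grows at most linearly in $s$ (with bounds depending on $U_1,U_2$), so combined with the trace inequality~\eqref{eq:traceineq} applied on a set where $\Sigma_{\mathcal{D}}$ has positive $(N-1)$-measure, the functional $\widetilde{J}$ is coercive on $H_{\Sigma_{\mathcal{D}}^*}^1(\mathcal{C}_\Omega, y^{1-2s}dxdy)$.

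Next, I would invoke weak lower semicontinuity of $\widetilde{J}$: the quadratic part is immediately weakly lsc, and the boundary integral converges along weakly convergent sequences thanks to the compactness of the trace embedding into $L^2(\Omega)$. This yields a minimiser $U\in H^1_{\Sigma_{\mathcal{D}}^*}$ satisfying the Euler--Lagrange identity
\[
\kappa_s\int_{\mathcal{C}_\Omega} y^{1-2s}\nabla U\cdot\nabla\phi\,dxdy = \int_\Omega g(x, U(x,0))\,\phi(x,0)\,dx
\quad \forall\,\phi\in H^1_{\Sigma_{\mathcal{D}}^*}.
\]

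The third step is to verify the bracketing $U_1\le U\le U_2$ in $\mathcal{C}_\Omega$. For the upper bound, I would use $\phi = (U-U_2)^+$, which lies in $H^1_{\Sigma_{\mathcal{D}}^*}$ since both $U$ and $U_2$ vanish as traces on $\Sigma_{\mathcal{D}}^*$ and the positive-part operation preserves this space. Subtracting the supersolution inequality for $U_2$ from the identity satisfied by $U$, and using that $g(x,s)=f(U_2(x,0))$ whenever $s\ge U_2(x,0)$, I obtain
\[
\kappa_s\int_{\mathcal{C}_\Omega} y^{1-2s}\bigl|\nabla (U-U_2)^+\bigr|^2\,dxdy \le \int_\Omega \bigl[g(x, U(x,0)) - f(U_2(x,0))\bigr](U-U_2)^+\,dx = 0,
\]
whence $(U-U_2)^+\equiv 0$, i.e. $U\le U_2$. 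The lower bound $U\ge U_1$ follows symmetrically, testing the subsolution inequality for $U_1$ against $(U_1-U)^+$. Once $U_1\le U\le U_2$ is established, we have $T(x,U(x,0))=U(x,0)$, so $g(x,U(x,0))=f(U(x,0))$ and $U$ solves $(P_f^*)$.

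The main technical point is ensuring admissibility of the cut-offs $(U-U_2)^+$ and $(U_1-U)^+$ as test functions in $H^1_{\Sigma_{\mathcal{D}}^*}(\mathcal{C}_\Omega, y^{1-2s}dxdy)$, which requires checking that the positive-part operation preserves the weighted $H^1$-regularity together with the vanishing trace on $\Sigma_{\mathcal{D}}^*$; this is routine but is precisely the place where the mixed boundary geometry enters. A secondary point, if $U_1,U_2$ are not \emph{a priori} bounded, is to use the growth hypothesis $0\le f(s)\le c(1+|s|^p)$ together with the fractional Sobolev embedding to keep $g$ in a suitable Lebesgue class, so that both coercivity of $\widetilde{J}$ and compactness in the nonlinear term survive; in the concrete applications within this paper the sub/supersolutions will be bounded, so this complication does not arise.
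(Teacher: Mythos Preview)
Your argument is correct and is precisely the standard sub-- and supersolution method (truncate the nonlinearity, minimise the resulting coercive functional, then compare with $U_1$ and $U_2$ to remove the truncation). The paper itself omits the proof entirely, stating only that ``the proof is rather standard and so we omit it''; your write-up supplies exactly the argument the authors had in mind.
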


Next we deal with a  comparison result.

\begin{lemma}\label{orden}
Let $U_1,U_2\in H_{\Sigma_{\mathcal{D}}^*}^1(\mathcal{C}_{\Omega},y^{1-2s}dxdy)$ be respectively a positive subsolution and a positive supersolution to $(P_f^*)$ and assume that $f(t)/t$ is decreasing for $t>0$. Then $U_1\leq U_2$ in $\mathcal{C}_{\Omega}$.
\end{lemma}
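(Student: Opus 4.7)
The natural strategy is a Picone--Brezis--Oswald argument exploiting the strict monotonicity of $t\mapsto f(t)/t$ to play the sub- and super-solution inequalities off against each other through a carefully chosen pair of non-negative test functions. Set $\mathcal{E}=\{X\in\mathcal{C}_{\Omega}:U_1(X)>U_2(X)\}$ and $\Omega^+=\{x\in\Omega:U_1(x,0)>U_2(x,0)\}$; the aim is to prove $|\mathcal{E}|=0$.

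I would test the subsolution inequality for $U_1$ with $\varphi_1^\varepsilon=(U_1^2-U_2^2)^+/(U_1+\varepsilon)$ and the supersolution inequality for $U_2$ with $\varphi_2^\varepsilon=(U_1^2-U_2^2)^+/(U_2+\varepsilon)$, then subtract and let $\varepsilon\to 0^+$. Both functions are non-negative, vanish on $\Sigma_{\mathcal{D}}^*$ (since $U_1=U_2=0$ there), and belong to $H_{\Sigma_{\mathcal{D}}^*}^1(\mathcal{C}_{\Omega},y^{1-2s}dxdy)$ for $\varepsilon>0$. A direct computation on $\mathcal{E}$ yields, in the limit, the Picone-type pointwise identity
\[
\nabla U_1\cdot\nabla\varphi_1-\nabla U_2\cdot\nabla\varphi_2=\Bigl|\tfrac{U_2}{U_1}\nabla U_1-\nabla U_2\Bigr|^2+\Bigl|\nabla U_1-\tfrac{U_1}{U_2}\nabla U_2\Bigr|^2\ge 0,
\]
while the boundary contribution collapses to
\[
\int_{\Omega^+}(U_1^2-U_2^2)\Bigl(\tfrac{f(U_1)}{U_1}-\tfrac{f(U_2)}{U_2}\Bigr)(x,0)\,dx\le 0
\]
by the decreasing hypothesis on $f(t)/t$. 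Sandwiching a non-negative integral below a non-positive one forces both to vanish; the \emph{strict} decrease of $f(t)/t$ then gives $|\Omega^+|=0$, so $U_1(\cdot,0)\le U_2(\cdot,0)$ a.e.\ in $\Omega$.

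To propagate from the trace to the whole cylinder, I would use $W=(U_1-U_2)^+\in H_{\Sigma_{\mathcal{D}}^*}^1(\mathcal{C}_{\Omega},y^{1-2s}dxdy)$ as a non-negative test function in both sub- and super-solution inequalities. By the previous step, its trace $W(\cdot,0)$ vanishes a.e.\ in $\Omega$, killing both right-hand sides, and subtraction yields
\[
\kappa_s\int_{\mathcal{C}_{\Omega}}y^{1-2s}|\nabla W|^2\,dxdy\le 0.
\]
Hence $\nabla W\equiv 0$, and because $W$ vanishes on the set $\Sigma_{\mathcal{D}}^*$ of positive $(N-1)$-dimensional measure, the weighted Poincar\'e inequality forces $W\equiv 0$, i.e.\ $U_1\le U_2$ in all of $\mathcal{C}_{\Omega}$.

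The principal technical obstacle is the admissibility of $\varphi_i^\varepsilon$ as test functions in the weighted Sobolev space and the passage to the limit $\varepsilon\to 0^+$. This hinges on the strict positivity of $U_1,U_2$ in the interior of $\mathcal{C}_{\Omega}$ (via the strong maximum principle for the degenerate operator $-\mathrm{div}(y^{1-2s}\nabla\cdot)$), on the simultaneous vanishing of $(U_1^2-U_2^2)^+$ wherever $U_i$ vanishes, and on a bound of the form $U_1/(U_2+\varepsilon)\le C$ on $\mathrm{supp}(\varphi_2^\varepsilon)$ (or a truncation argument in $\min(U_i,k)$) to make $\nabla\varphi_i^\varepsilon$ uniformly bounded in $L^2(y^{1-2s}dxdy)$, so that dominated convergence applies term by term.
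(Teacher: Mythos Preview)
Your argument is correct, but it follows a genuinely different route from the paper's. The paper adopts the Ambrosetti--Brezis--Cerami regularisation: it tests with $\varphi_1=U_2\,\theta_\varepsilon(U_1-U_2)$ and $\varphi_2=U_1\,\theta_\varepsilon(U_1-U_2)$, where $\theta_\varepsilon$ is a smooth monotone approximation of $\chi_{\{t>0\}}$, and shows that the resulting cross term
\[
J_\varepsilon=\kappa_s\int_{\mathcal{C}_\Omega}y^{1-2s}\langle U_1\nabla U_2-U_2\nabla U_1,\nabla(U_1-U_2)\rangle\,\theta_\varepsilon'(U_1-U_2)\,dxdy
\]
is bounded above by $c\varepsilon$ (via the primitive $\eta_\varepsilon$ with $\eta_\varepsilon'(t)=t\theta_\varepsilon'(t)$) and bounded below by the sign-definite trace integral. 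Letting $\varepsilon\to0$ gives $u_1\le u_2$ on $\Omega$, and then the maximum principle in the cylinder finishes. By contrast, you run a Picone/Brezis--Oswald identity with the quotient test functions $(U_1^2-U_2^2)^+/(U_i+\varepsilon)$, obtaining a pointwise non-negative gradient integrand rather than an $O(\varepsilon)$ upper bound. The trade-off is clear: your approach yields more information (equality in the Picone identity), but the admissibility of the quotients in the weighted space and the passage $\varepsilon\to0$ are delicate exactly where you flag them, whereas the paper's test functions are manifestly in $H^1_{\Sigma_{\mathcal D}^*}(\mathcal{C}_\Omega,y^{1-2s}dxdy)$ because $\theta_\varepsilon$ is smooth and bounded, so no positivity or ratio control is needed. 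Your final step, propagating from the trace to the cylinder via $(U_1-U_2)^+$, matches the paper's appeal to the maximum principle.
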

\begin{proof}
The proof is similar to the proof of \cite[Lemma 3.3]{ABC}. By definition we have, for any  positive test functions $\phi_1,\:\phi_2\in H_{\Sigma_{\mathcal{D}}^*}^1 (\mathcal{C}_{\Omega})$ that
\begin{equation*}
\begin{split}
\kappa_s\int_{\mathcal{C}_{\Omega}}y^{1-2s}\nabla U_1\nabla \phi_1 dxdy&\leq\int_{\Omega}f(u_1)\phi_1 (x,0)dx,\\
\kappa_s\int_{\mathcal{C}_{\Omega}}y^{1-2s}\nabla U_2\nabla \phi_2 dxdy&\geq\int_{\Omega}f(u_2)\phi_2 (x,0) dx,
\end{split}
\end{equation*}
where $u_1=U_1(x,0)$ and $u_2=U_2(x,0)$. Let $\theta(t)$ be a smooth non-decreasing function such that $\theta(t)=0$ for $t\leq0$, $\theta(t)=1$ for $t\geq1$, set $\theta_{\varepsilon}(t)=\theta(t/\varepsilon)$, and define the test functions $\varphi_1$ and $\varphi_2$ as
\begin{equation*}
\varphi_1=U_2\theta_{\varepsilon}\left(U_1-U_2\right),\quad \varphi_2=U_1\theta_{\varepsilon}\left(U_1-U_2\right).
\end{equation*}
From the above inequalities we obtain
\begin{equation*}
\begin{split}
J_{\varepsilon}:&=\kappa_s\int_{\mathcal{C}_{\Omega}}y^{1-2s}\left\langle U_1\nabla U_2-U_2\nabla U_1,\nabla\!\left(U_1-U_2\right)\right\rangle\theta_{\varepsilon}'\left(U_1-U_2\right)dxdy\\
&\geq\int_{\Omega}u_1u_2\left(\frac{f(u_2)}{u_2}-\frac{f(u_1)}{u_1}\right)\theta_{\varepsilon}\left(u_1-u_2\right)dx.
\end{split}
\end{equation*}
On the other hand,
\begin{equation*}
\begin{split}
J_{\varepsilon}&\leq \kappa_s\int_{\mathcal{C}_{\Omega}}y^{1-2s}\left\langle\nabla U_1,\left(U_1-U_2\right)\nabla\!\left(U_1-U_2\right)\right\rangle\theta_{\varepsilon}'\left(U_1-U_2\right)dxdy\\
&=\kappa_s\int_{\mathcal{C}_{\Omega}}y^{1-2s}\left\langle\nabla U_1,\nabla \eta_{\varepsilon}(U_1-U_2)\right\rangle dxdy\\
&=\int_{\Omega}f(u_1)\eta_{\varepsilon}(u_1-u_2)dx,
\end{split}
\end{equation*}
where $\eta_{\varepsilon}'(t)=t\theta_{\varepsilon}'(t)$. Since $0\leq\eta_{\varepsilon}\leq\varepsilon$, we find $I_{\varepsilon}\leq c\varepsilon$. Then, letting $\varepsilon\to0^+$ we conclude
\begin{equation*}
\int\limits_{\Omega\cap\{u_1>u_2\}}u_1u_2\left(\frac{f(u_2)}{u_2}-\frac{f(u_1)}{u_1}\right)dx\leq0.
\end{equation*}
Taking in mind the hypotheses on $f$, it follows $u_1\leq u_2$ in $\Omega$. The result for the whole cylinder $\mathcal{C}_{\Omega}$ follows by the maximum principle.
\end{proof}
Next we focus on the remaining assertions in Theorem \ref{Thuno}-$(2)$. Thus, from now on we assume that $0<q<1$.
\begin{lemma} \label{lem:Lambda}
Let $\Lambda$ be defined by
\begin{equation*}
\Lambda=\sup\{\lambda>0:(P_{\lambda})\  \text{has solution} \},
\end{equation*}
then, $0<\Lambda<\infty$.
\end{lemma}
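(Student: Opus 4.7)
The plan is to establish the two inequalities $\Lambda>0$ and $\Lambda<\infty$ separately, both by fairly classical arguments from the concave-convex literature adapted to the mixed boundary setting.

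To show $\Lambda>0$, I would construct an ordered pair of sub- and supersolutions to $(P_\lambda^*)$ for every sufficiently small $\lambda$ and then invoke Lemma \ref{existencia}. For the supersolution, let $\Psi=E_s[\psi]$ where $\psi\in H^s_{\Sigma_{\mathcal{D}}}(\Omega)$ solves $(-\Delta)^s\psi=1$ in $\Omega$ with $B(\psi)=0$; by standard regularity one has $\|\psi\|_{L^\infty(\Omega)}\le K$ for some $K>0$. Taking $\overline{U}=M\Psi$, the supersolution condition reduces to the pointwise inequality $M\ge \lambda(MK)^q+(MK)^r$, i.e.\ $1\ge \lambda K^q M^{q-1}+K^r M^{r-1}$. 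Optimizing the right-hand side in $M$ (its minimum over $M>0$ behaves like a positive power of $\lambda$ and hence tends to $0$ as $\lambda\to 0^+$) yields a choice $M=M(\lambda)$ realizing the inequality provided $\lambda$ is small. For the subsolution I would take $\underline{U}=\varepsilon E_s[\varphi_1]$ where $\varphi_1$ is the first eigenfunction: the subsolution condition becomes $\varepsilon\lambda_1^s\varphi_1\le \lambda\varepsilon^q\varphi_1^q+\varepsilon^r\varphi_1^r$, which is verified for $\varepsilon$ small since $q<1$ and $\varphi_1$ is bounded. Finally, choosing $\varepsilon$ small enough so that $\varepsilon\varphi_1\le M\psi$ in $\Omega$ (possible because $M\psi$ is bounded below away from $\Sigma_{\mathcal{D}}$ by Hopf-type arguments and $\varepsilon\varphi_1$ can be made arbitrarily small), Lemma \ref{existencia} delivers a solution and hence $\Lambda>0$.

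To show $\Lambda<\infty$, I would argue by contradiction, using $\varphi_1$ as a test function. If $u$ is a solution to $(P_\lambda)$, the spectral definition of $(-\Delta)^s$ together with $(-\Delta)^s\varphi_1=\lambda_1^s\varphi_1$ yields
\begin{equation*}
\lambda_1^s\int_\Omega u\varphi_1\,dx=\int_\Omega\bigl(\lambda u^q+u^r\bigr)\varphi_1\,dx.
\end{equation*}
Set $h_\lambda(t)=\lambda t^{q-1}+t^{r-1}$ for $t>0$. A direct calculus computation shows that $h_\lambda$ attains a positive minimum at $t_*(\lambda)=\bigl(\lambda(1-q)/(r-1)\bigr)^{1/(r-q)}$ and
\begin{equation*}
\min_{t>0}h_\lambda(t)=C(q,r)\,\lambda^{(r-1)/(r-q)}\longrightarrow+\infty\quad\text{as }\lambda\to\infty.
\end{equation*}
Hence for $\lambda$ large enough, $\lambda t^q+t^r\ge h_\lambda(t_*)\,t>\lambda_1^s\,t$ for every $t>0$. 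Plugging this into the identity above gives $\lambda_1^s\int_\Omega u\varphi_1<\int_\Omega(\lambda u^q+u^r)\varphi_1$, contradicting the equality. Therefore no solution exists for large $\lambda$, proving $\Lambda<\infty$.

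The routine parts of the argument are the two algebraic optimizations of $\lambda t^q+t^r$; the only delicate point is the supersolution construction, where one must verify that a suitable $\psi$ with $(-\Delta)^s\psi=1$ and mixed boundary data exists and is bounded (using the trace and Sobolev embeddings of Section 2, together with the $L^\infty$ regularity that follows from \cite{CCLO}) and that the ordering $\underline{U}\le \overline{U}$ can be arranged near $\Sigma_{\mathcal{N}}$ and near $\Sigma_{\mathcal{D}}$ simultaneously. Once these points are in place, both halves follow cleanly.
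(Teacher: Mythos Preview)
Your proposal is correct and follows essentially the same route as the paper: the bound $\Lambda<\infty$ via testing against $\varphi_1$ and the pointwise inequality $\lambda t^q+t^r\ge c\,\lambda^{(r-1)/(r-q)}t$, and the bound $\Lambda>0$ via the ordered sub/supersolution pair $(\varepsilon E_s[\varphi_1],\,M E_s[\psi])$ with $(-\Delta)^s\psi=1$ and $\|\psi\|_{L^\infty}<\infty$ from \cite{CCLO}. For the ordering $\underline U\le\overline U$ the paper simply invokes the maximum principle (noting $(-\Delta)^s(\varepsilon\varphi_1)=\varepsilon\lambda_1^s\varphi_1\le \varepsilon\lambda_1^s\|\varphi_1\|_\infty\le M=(-\Delta)^s(M\psi)$ for $\varepsilon$ small), which is more direct than the Hopf-type comparison you sketch and avoids any separate analysis near $\Sigma_{\mathcal D}$ and $\Sigma_{\mathcal N}$.
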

\begin{proof}
As for the linear case, consider the eigenvalue problem associated
to the first eigenvalue $\lambda_1^s$, and let $\varphi_1$ the
associated eigenfunction. Using $\varphi_1$ as a test function in
problem \eqref{problema_abajo}, we have
\begin{equation}\label{truquillo}
\int_{\Omega}(\lambda
u^q+u^r)\varphi_1dx=\lambda_1^s\int_{\Omega}u\varphi_1dx.
\end{equation}
Since there exists a constant   $c=c(r,q)>1$ such that $\lambda
t^q+t^r>c\lambda^{\delta}t$ with $\delta=\frac{r}{r-q}$, for any
$t>0$, from \eqref{truquillo} we deduce
$c\lambda^{\delta}<\lambda_1^s$ and hence $\Lambda<\infty$. In
particular, this also proves that there is no solution to
\eqref{problema_abajo} for $\lambda>\Lambda$.

In order to prove  that $\Lambda>0$, we prove, by means of the sub  and supersolution technique, the existence of solution to \eqref{eq:funcional}
for any small positive $\lambda$. Indeed, for $\varepsilon>0$ small enough, $\underline{U}=\varepsilon E_s[\varphi_1]$ is a subsolution to
\eqref{eq:funcional}. A supersolution can be constructed as an appropiate multiple of the function $G$, the  solution to
\begin{equation*}
        \left\{
        \begin{array}{rlcl}
        \displaystyle -\text{div}(y^{1-2s}\nabla G)&\!\!\!\!=0  & & \mbox{ in } \mathcal{C}_{\Omega} , \\
        \displaystyle B(G)&\!\!\!\!=0   & & \mbox{ on } \partial_L\mathcal{C}_{\Omega} , \\
         \displaystyle \frac{\partial G}{\partial \nu^s}&\!\!\!\!= 1& &  \mbox{ on } \Omega\times\{y=0\}.
         \end{array}
         \right.
\end{equation*}
Since the trace function $g(x)=G(x,0)$ is a solution to
\begin{equation*}
        \left\{
        \begin{tabular}{rcl}
        $(-\Delta)^sg=1$ & &in $\Omega$, \\
        $B(g)=0$  & &on $\partial\Omega$,
        \end{tabular}
        \right.
\end{equation*}
because of \cite[Theorem 3.4]{CCLO} we have $\|g\|_{L^{\infty}(\Omega)}<+\infty$. Next, since $0<q<1<r$ we can find $\lambda_0>0$ such that for all $0<\lambda\leq\lambda_0$ there exists $M=M(\lambda)$ such that
\begin{equation}\label{eq:Mlto0}
M\geq\lambda M^q\|g\|_{L^{\infty}(\Omega)}^q+M^r\|g\|_{L^{\infty}(\Omega)}^r.
\end{equation}
As a consequence, the function $h=Mg$ satisfies $M=(-\Delta)^sh\geq \lambda h^q+h^r$ and, by the maximum principle, the extension function
$\overline U=E_s[h]$ is a supersolution and $\underline U \leq \overline U$. Applying Lemma \ref{existencia} we conclude the existence of a
solution $U$ to problem \eqref{eq:funcional}. Therefore, its trace $u(x)=U(x,0)$ is a solution to problem \eqref{problema_abajo}, $\lambda<\lambda_0$.
\end{proof}

\begin{remark}
Although Lemma \ref{lem:Lambda} provides  the existence of a solution for  small $\lambda>0$,  we can also prove this result studying the
associated functional $I_\lambda$. Indeed,
\begin{equation*}
\begin{aligned}
 I_{\lambda}(v)&= \frac 12\int_{\Omega}|(-\Delta)^{s/2}v|^2dx-\frac{\lambda}{q+1}\int_{\Omega} |v|^{q+1}dx-\frac{1}{r+1}\int_{\Omega}|v|^{r+1} dx \\
               &\geq \frac 12\int_{\Omega}|(-\Delta)^{s/2}v|^2dx - \lambda c_1 {\left( \int_{\Omega}|(-\Delta)^{s/2}v|^2dx \right)}^{(q+1)/2} \\
 &\mkern+25mu - c_2{\left( \int_{\Omega}|(-\Delta)^{s/2}v|^2dx \right)}^{(r+1)/2},
\end{aligned}
\end{equation*}
for some positive constants $c_1$ and $c_2$. Then, for sufficiently small $\lambda$, there exist (at least) two solutions to problem
\eqref{problema_abajo}, one given by minimization and another given by the Mountain-Pass Theorem. The proof is rather common, based on
the geometry of the function $g(t)= \frac{1}{2}t^2 - \lambda c_1t^{q+1} - c_2t^{r+1}$ (see for instance \cite{AR}).
\end{remark}

Next we show that there exists a solution for every $\lambda \in (0,\Lambda)$.
\begin{lemma}\label{lem:minimal}
Problem $(P_{\lambda})$ has at least a positive minimal solution for every $0<\lambda<\Lambda$. Moreover, the family $\{u_{\lambda}\}$ of minimal
solutions is increasing with respect to $\lambda$.
\end{lemma}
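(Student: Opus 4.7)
The plan is to combine the sub-supersolution technique furnished by Lemma~\ref{existencia} with a monotone iteration in order to produce the minimal solution, and then to exploit minimality to derive the monotonicity in $\lambda$.

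For a fixed $\lambda\in(0,\Lambda)$, I invoke the definition of $\Lambda$ to pick some $\mu\in(\lambda,\Lambda)$ for which $(P_\mu)$ admits a positive solution $u_\mu$. Its $s$-extension $U_\mu=E_s[u_\mu]$ satisfies $\frac{\partial U_\mu}{\partial\nu^s}=\mu u_\mu^q+u_\mu^r\geq \lambda u_\mu^q+u_\mu^r$, so $U_\mu$ is a supersolution of $(P_\lambda^*)$. On the other hand, $\underline U=\varepsilon E_s[\varphi_1]$ will serve as a subsolution as soon as $\varepsilon \lambda_1^s\varphi_1\leq \lambda(\varepsilon\varphi_1)^q$, i.e.\ $\varepsilon^{1-q}\lambda_1^s\|\varphi_1\|_{L^\infty(\Omega)}^{1-q}\leq \lambda$, which, thanks to $q<1$, holds for $\varepsilon>0$ sufficiently small. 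Shrinking $\varepsilon$ further if necessary (using the positivity and boundary behaviour of $u_\mu$), I also guarantee $\underline U\leq U_\mu$. Lemma~\ref{existencia} then yields a solution $U_\lambda$ of $(P_\lambda^*)$ with $\underline U\leq U_\lambda\leq U_\mu$, whose trace solves $(P_\lambda)$.

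To refine this to a minimal solution I set up a monotone iteration. Denote by $T_\lambda$ the operator which assigns to each $u\geq 0$ the unique weak solution of the linear mixed problem $(-\Delta)^s w=\lambda u^q+u^r$ in $\Omega$ with $B(w)=0$ on $\partial\Omega$. By the maximum principle for the mixed spectral fractional Laplacian, $T_\lambda$ is monotone. Starting from $u_0=\varepsilon\varphi_1$ and setting $u_{n+1}=T_\lambda(u_n)$, the fact that $u_0$ is a subsolution and $u_\mu$ a supersolution produces a nondecreasing sequence bounded above by $u_\mu$; by the Monotone Convergence Theorem combined with the regularity provided in \cite{CCLO}, it converges to a solution $\underline{u}_\lambda$ of $(P_\lambda)$. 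To see that $\underline{u}_\lambda$ is minimal, let $v$ be any positive solution of $(P_\lambda)$. A Hopf-type estimate for the mixed fractional problem delivers a constant $c_v>0$ with $v\geq c_v\varphi_1$; taking $\varepsilon\leq c_v$ at the outset gives $u_0\leq v$, and since $v$ itself is a fixed point of $T_\lambda$, induction yields $u_n\leq v$ for every $n$, hence $\underline{u}_\lambda\leq v$.

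Monotonicity in $\lambda$ then comes for free: if $0<\lambda_1<\lambda_2<\Lambda$, the minimal solution $\underline{u}_{\lambda_2}$ satisfies $(-\Delta)^s\underline{u}_{\lambda_2}=\lambda_2\underline{u}_{\lambda_2}^q+\underline{u}_{\lambda_2}^r\geq \lambda_1\underline{u}_{\lambda_2}^q+\underline{u}_{\lambda_2}^r$ and is therefore a supersolution of $(P_{\lambda_1})$. Running the previous iteration with $u_\mu$ replaced by $\underline{u}_{\lambda_2}$ and invoking the minimality proved above, we conclude $\underline{u}_{\lambda_1}\leq \underline{u}_{\lambda_2}$. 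The principal technical obstacle is exactly the Hopf-type bound $v\geq c_v\varphi_1$ used in the minimality step, which must be obtained via a strong maximum principle and a boundary-point argument for the degenerate elliptic operator $-\mathrm{div}(y^{1-2s}\nabla\,\cdot\,)$ on the cylinder $\mathcal{C}_\Omega$ under mixed Dirichlet-Neumann boundary conditions; this is where the structural hypothesis $(\mathfrak B)$ on the interface $\Gamma$ plays a decisive role.
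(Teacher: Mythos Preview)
Your overall scheme—supersolution from a solution at a larger parameter, subsolution plus monotone iteration, then monotonicity in $\lambda$—matches the paper's. The essential difference is the choice of subsolution, and this is where your minimality argument develops a gap.

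You start the iteration from $u_0=\varepsilon\varphi_1$ with a \emph{fixed} $\varepsilon$, and to prove minimality you need $u_0\leq v$ for an \emph{arbitrary} solution $v$. You arrange this via a Hopf-type bound $v\geq c_v\varphi_1$ and the phrase ``taking $\varepsilon\leq c_v$ at the outset''. But $\varepsilon$ was fixed before $v$ entered the picture, and $c_v$ depends on $v$; if you now shrink $\varepsilon$, the iteration may converge to a \emph{different} solution, so the conclusion $\underline{u}_\lambda\leq v$ is not justified for the solution you actually constructed. Repairing this by setting $\underline{u}_\lambda=\inf_{\varepsilon>0}w_\varepsilon$ would require showing that this infimum remains a positive solution, and the only clean uniform lower bound available for that is precisely the one the paper exploits. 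In addition, the Hopf inequality $v\geq c_v\varphi_1$ near the interface $\Gamma$ is itself nontrivial in the mixed fractional setting and is not supplied here; you flag it as the principal obstacle but do not resolve it.

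The paper avoids both difficulties by taking as subsolution the unique positive solution $V_\lambda$ of the purely sublinear problem $(P_f^*)$ with $f(t)=\lambda t^q$. Since any solution $U$ of $(P_\lambda^*)$ is automatically a supersolution of that sublinear problem, and $f(t)/t=\lambda t^{q-1}$ is decreasing, Lemma~\ref{orden} gives $V_\lambda\leq U$ for \emph{every} solution $U$—with no Hopf lemma and no dependence on the particular $v$. The iteration launched from $v_\lambda=V_\lambda(\cdot,0)$ therefore stays below every solution by construction, delivering minimality immediately; your monotonicity argument then goes through verbatim.
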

\begin{proof}
By definition of $\Lambda$, for any $0<\lambda<\Lambda$ there exists $\mu\in (\lambda,\Lambda]$ such that $(P_\mu^*)$ admits a solution $U_{\mu}$.
It is easy to see that  $U_{\mu}$ is a supersolution for   \eqref{eq:funcional}. On the other hand, let $V_{\lambda}$ be the unique solution to problem $(P_f^*)$ with $f(t)= \lambda t^q$ (the existence can be deduced by minimization, while uniqueness follows from Lemma~\ref{orden}). It is clear that $V_{\lambda}$ is a subsolution to problem \eqref{eq:funcional} and, because of Lemma \ref{orden}, we have $V_{\lambda} \leq U_{\mu}$. Therefore, by Lemma \ref{existencia}, we conclude that there is a solution to \eqref{eq:funcional} and, as a consequence, for the whole open interval $(0,\Lambda)$. Finally, we prove the existence of a minimal solution for all $0<\lambda<\Lambda$. Indeed, given a solution $u$ to \eqref{problema_abajo} we take $U=E_s(u)$ and, by Lemma~\ref{orden} being $U$ solution to problem \eqref{eq:funcional}, it satisfies $V_{\lambda}\leq U$ with $V_{\lambda}$ solution to problem $(P_f^*)$ with $f(t)= \lambda t^q$. Then, the function $v_{\lambda}(x)=V_{\lambda}(x,0)$ is a subsolution of problem \eqref{problema_abajo}
and the monotone iteration procedure described by
\begin{equation*}
\begin{array}{c}
(-\Delta)^s u_{n+1}=\lambda u_{n}^q+u_n^r,\quad  u_n \in H_{\Sigma_{\mathcal{D}}}^s ({\Omega})\quad \mbox{ with } \quad u_0=v_{\lambda},
 \end{array}
\end{equation*}
verifies $u_n\leq U(x,0)=u$ and $u_n\nearrow u_{\lambda}$ with $u_{\lambda}$ solution to problem \eqref{problema_abajo}. In particular $u_\lambda\leq u$ and we conclude that $u_\lambda$ is a minimal solution. The monotonicity follows directly from first part of the proof, taking $U_\mu=E_s(u_\mu)$ which leads to $u_\lambda\leq u_\mu$ whenever $0<\lambda < \mu\leq \Lambda$.
\end{proof}
\begin{remark}
In the proof of Lemma \ref{lem:Lambda}, precisely in \eqref{eq:Mlto0}, we can choose $M=M(\lambda)$  verifying $M(\lambda)\to 0$ as
$\lambda\to0$, proving that $\|u_{\lambda}\|_{L^{\infty}(\Omega)}\to0$ as $\lambda\to0$. Indeed, it is enough to choose $M(\lambda)=\lambda^\eta$ with $0<\eta<\frac{1}{1-q}$.
\end{remark}
\begin{lemma}\label{lem:solLambda}
Problem \eqref{eq:funcional} has at least one solution if $\lambda = \Lambda$.
\end{lemma}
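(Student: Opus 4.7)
The plan is to take a sequence $\lambda_n \nearrow \Lambda$ of parameters in $(0,\Lambda)$ and consider the associated family of minimal solutions $\{u_{\lambda_n}\}$ provided by Lemma~\ref{lem:minimal}. The goal is to show that this sequence has a limit $u_\Lambda$ that solves $(P_\Lambda)$, whose extension $U_\Lambda = E_s[u_\Lambda]$ then solves \eqref{eq:funcional} with $\lambda=\Lambda$.

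First, by Theorem~\ref{acotacion} applied along the sequence $\{\lambda_n\}\subset [0,\Lambda]$, there is a constant $C>0$ independent of $n$ such that $\|u_{\lambda_n}\|_{L^\infty(\Omega)} \le C$. Combined with the monotonicity statement in Lemma~\ref{lem:minimal}, the sequence $\{u_{\lambda_n}\}$ is pointwise increasing in $n$ and uniformly bounded, so the pointwise limit
\begin{equation*}
u_\Lambda(x) := \lim_{n\to\infty} u_{\lambda_n}(x)
\end{equation*}
exists, lies in $L^\infty(\Omega)$, and by monotone (or dominated) convergence $u_{\lambda_n}\to u_\Lambda$ strongly in $L^p(\Omega)$ for every $1\le p<\infty$. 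In particular $u_\Lambda \ge u_{\lambda_1}>0$ almost everywhere, which already guarantees that the limit is nontrivial.

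Next, I would derive a uniform $H^s_{\Sigma_{\mathcal{D}}}(\Omega)$ bound by testing the weak formulation of $(P_{\lambda_n})$ against $u_{\lambda_n}$ itself,
\begin{equation*}
\int_\Omega |(-\Delta)^{s/2} u_{\lambda_n}|^2\,dx = \lambda_n \int_\Omega u_{\lambda_n}^{q+1}\,dx + \int_\Omega u_{\lambda_n}^{r+1}\,dx,
\end{equation*}
whose right-hand side is uniformly bounded thanks to the $L^\infty$-estimate and $\lambda_n \le \Lambda$. Thus $\{u_{\lambda_n}\}$ is bounded in $H^s_{\Sigma_{\mathcal{D}}}(\Omega)$, and up to a subsequence it converges weakly in $H^s_{\Sigma_{\mathcal{D}}}(\Omega)$ to some limit which, by uniqueness of a.e.\ limits, must coincide with $u_\Lambda$. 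In particular $u_\Lambda\in H^s_{\Sigma_{\mathcal{D}}}(\Omega)$.

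Finally, I would pass to the limit in the weak formulation: for any $\psi\in H^s_{\Sigma_{\mathcal{D}}}(\Omega)$,
\begin{equation*}
\int_\Omega (-\Delta)^{s/2}u_{\lambda_n}\,(-\Delta)^{s/2}\psi\,dx = \int_\Omega \bigl(\lambda_n u_{\lambda_n}^q + u_{\lambda_n}^r\bigr)\psi\,dx.
\end{equation*}
The left-hand side converges by weak convergence in $H^s_{\Sigma_{\mathcal{D}}}$. For the right-hand side, the uniform $L^\infty$ bound together with the pointwise convergence $u_{\lambda_n}\to u_\Lambda$ and dominated convergence yield $\lambda_n u_{\lambda_n}^q + u_{\lambda_n}^r \to \Lambda\, u_\Lambda^q + u_\Lambda^r$ in $L^p(\Omega)$ for any finite $p$, and the product with $\psi\in L^{2_s^*}(\Omega)$ can be integrated by H\"older. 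This shows that $u_\Lambda$ is a weak solution of $(P_\Lambda)$, and since $u_\Lambda>0$ and lies in $H^s_{\Sigma_{\mathcal{D}}}(\Omega)\cap L^\infty(\Omega)$, its $s$-extension $U_\Lambda = E_s[u_\Lambda]$ is an energy solution of \eqref{eq:funcional} at $\lambda=\Lambda$. The only delicate point, namely preventing the limit from collapsing to zero, is automatically resolved here by the monotonicity of the family of minimal solutions combined with the a priori $L^\infty$ bound of Theorem~\ref{acotacion}; without the latter, one would need to argue more carefully via the energy lower bound coming from $u_\Lambda \ge u_{\lambda_1}$.
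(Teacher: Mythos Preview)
Your argument is correct, but it follows a different route from the paper's. The paper does \emph{not} invoke Theorem~\ref{acotacion} here; instead it relies on the semistability of the minimal solutions (Proposition~5.1 and inequality~\eqref{ineq:posit}). Testing \eqref{ineq:posit} with $v=u_{\lambda_n}$ yields $(r-1)\int_\Omega u_{\lambda_n}^{r+1}\le \lambda_n(1-q)\int_\Omega u_{\lambda_n}^{q+1}$, which combined with the equation and a Sobolev inequality gives a uniform bound on $\|u_{\lambda_n}\|_{H^s_{\Sigma_{\mathcal D}}(\Omega)}$ directly, without ever passing through an $L^\infty$ bound. Your approach instead imports the heavy blow-up/moving-planes machinery of Sections~3--4 via Theorem~\ref{acotacion} to obtain the $L^\infty$ bound, and then the energy bound follows trivially from the equation. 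This is logically sound---the proof of Theorem~\ref{acotacion} uses only that $\Lambda<\infty$, not the existence of a solution at $\Lambda$, so there is no circularity---and it has the bonus of giving strong $L^p$ convergence for free via monotonicity. The paper's route, by contrast, is entirely self-contained within Section~5 and shows that the variational structure alone (semistability of minimal solutions) suffices, which is the more classical concave--convex argument in the spirit of \cite{ABC}.
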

To prove Lemma \ref{lem:solLambda} we extend \cite[Lemma 3.5]{ABC} to the fractional framework in this manuscript. This result guarantees that the linearized equation corresponding to \eqref{problema_abajo} has non-negative eigenvalues at the minimal solution.
\begin{prop}
Let $u_{\lambda}$ be the minimal solution to \eqref{problema_abajo} and define $a_{\lambda}=a_{\lambda}(x)=\lambda q u_{\lambda}^{q-1}+r u_{\lambda}^{r-1}$. Then, the operator $[(-\Delta)^s-a_\lambda(x)]$ with mixed boundary conditions has a first eigenvalue $\nu_1\geq0$.
\end{prop}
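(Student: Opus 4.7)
The plan is to argue by contradiction: suppose $\nu_1<0$ and let $\phi_1 >0$ be an $L^2$-normalized first eigenfunction of $(-\Delta)^s - a_\lambda(x)$ with mixed boundary conditions. The goal is to build a positive solution of \eqref{problema_abajo} strictly below $u_\lambda$, contradicting the minimality guaranteed by Lemma~\ref{lem:minimal}. The natural supersolution candidate is
\begin{equation*}
\bar u_\epsilon := u_\lambda - \epsilon \phi_1, \qquad \epsilon>0\text{ small}.
\end{equation*}

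The first step is to verify that, for $\epsilon>0$ sufficiently small, $\bar u_\epsilon$ is a positive element of $H_{\Sigma_{\mathcal{D}}}^s(\Omega)$ satisfying $(-\Delta)^s \bar u_\epsilon \geq f_\lambda(\bar u_\epsilon)$ in the weak sense. Positivity will follow from a Hopf-type comparison based on the boundary regularity results of \cite{CCLO}: since $\phi_1$ and $u_\lambda$ share the same mixed boundary data and vanish on $\Sigma_{\mathcal{D}}$ at the correct fractional rate, one can ensure $\phi_1 \leq C u_\lambda$ in $\overline\Omega$. For the supersolution inequality, a Taylor expansion of $f_\lambda(t)=\lambda t^q + t^r$ between $\bar u_\epsilon$ and $u_\lambda$ together with $(-\Delta)^s\phi_1=(a_\lambda+\nu_1)\phi_1$ yields
\begin{equation*}
(-\Delta)^s\bar u_\epsilon - f_\lambda(\bar u_\epsilon)
= -\epsilon \nu_1 \phi_1 - \frac{\epsilon^2 \phi_1^2}{2}\, f_\lambda''(\xi_\epsilon),
\end{equation*}
whose leading term is strictly positive because $\nu_1<0$. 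Where $f_\lambda''\leq 0$ (i.e.\ where the concave term $\lambda q(q-1)t^{q-2}$ dominates, precisely in a neighbourhood of $\Sigma_{\mathcal{D}}$ where $u_\lambda$ is small) the remainder is nonnegative and the inequality is immediate; in the complementary region $u_\lambda$ is bounded below by a positive constant, so $\xi_\epsilon$, $f_\lambda''(\xi_\epsilon)$ and $\phi_1$ are all bounded, making the remainder $O(\epsilon^2)$ pointwise and absorbable by $-\epsilon\nu_1\phi_1$ for $\epsilon$ small.

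The second step produces a subsolution strictly below $\bar u_\epsilon$ so that Lemma~\ref{existencia} can be invoked. The natural choice is $V_\lambda$, the unique positive solution of the sublinear companion problem $(-\Delta)^s v = \lambda v^q$ with the same mixed boundary conditions, already used in the proof of Lemma~\ref{lem:minimal}. By Lemma~\ref{orden}, $V_\lambda \leq u_\lambda$; the strong maximum principle applied to $w := u_\lambda - V_\lambda$, which solves $(-\Delta)^s w = u_\lambda^r + \lambda(u_\lambda^q - V_\lambda^q)\geq u_\lambda^r>0$ wherever $w>0$, upgrades this to $V_\lambda < u_\lambda$ strictly. A further Hopf-type comparison gives $u_\lambda - V_\lambda \geq c\,\phi_1$ for some $c>0$, so $V_\lambda \leq \bar u_\epsilon$ whenever $\epsilon\leq c$. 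Lemma~\ref{existencia} then provides a solution $\tilde u$ of \eqref{problema_abajo} with $V_\lambda \leq \tilde u \leq \bar u_\epsilon < u_\lambda$, contradicting the minimality of $u_\lambda$.

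The main obstacle will be the fine control of the Taylor remainder near $\Sigma_{\mathcal{D}}$, where $f_\lambda''(\xi_\epsilon)\sim \xi_\epsilon^{q-2}$ blows up; as noted above, this is resolved by observing that the sign of $f_\lambda''$ is favourable precisely in that zone, and that elsewhere $u_\lambda$ is bounded away from zero. A secondary technical point is the well-posedness of the eigenpair $(\nu_1,\phi_1)$ itself: because $a_\lambda$ is unbounded near $\Sigma_{\mathcal{D}}$, one needs a fractional Hardy-type inequality on $H_{\Sigma_{\mathcal{D}}}^s(\Omega)$ to show that the Rayleigh quotient defining $\nu_1$ is coercive from below and that its infimum is attained by a positive first eigenfunction, which can then be taken for $\phi_1$.
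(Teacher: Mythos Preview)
Your proposal is correct and follows essentially the same route as the paper: assume $\nu_1<0$, show that $u_\lambda-\epsilon\phi_1$ is a supersolution for small $\epsilon>0$, pair it with the sublinear solution $V_\lambda$ of $(-\Delta)^s v=\lambda v^q$ as a subsolution, and invoke the sub--supersolution lemma to contradict minimality.

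The only substantive difference is in the handling of the remainder. You expand $f_\lambda$ to second order and then split $\Omega$ into the region where $f_\lambda''(\xi_\epsilon)\le 0$ (near $\Sigma_{\mathcal D}$) and its complement. The paper avoids this splitting with a one-line trick: since $t\mapsto t^q$ is concave, one has $\lambda(u_\lambda-\epsilon\phi_1)^q\le \lambda u_\lambda^q-\lambda q\,u_\lambda^{q-1}\epsilon\phi_1$, so the entire $q$-contribution drops out with a favourable sign and only the convex $t^r$-part produces a remainder, which is then $o(\epsilon\phi_1)$. This is slightly cleaner than your region decomposition, but both arguments arrive at the same supersolution inequality. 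Conversely, you are more explicit than the paper about the Hopf-type comparison $\phi_1\le C u_\lambda$ needed for positivity and for the remainder estimate, and about the well-posedness of the eigenpair in the presence of the singular potential $u_\lambda^{q-1}$; the paper takes both points for granted.
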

\begin{remark}
In particuar it follows that
\begin{equation}\label{ineq:posit}
\int_{\Omega}\left(|(-\Delta)^{s/2}v|^2-a_{\lambda}v^2\right)dx\geq0,\quad\mbox{for all } v\in H_{\Sigma_{\mathcal{D}}}^s(\Omega).
\end{equation}

\end{remark}
\begin{proof}
By contradiction, assume that $\nu_1<0$ and let $\phi_1>0$ be the first eigenfunction. Let $\alpha>0$ and observe that since $0<q<1$,
\begin{equation*}
\begin{split}
(-\Delta)^s(u_{\lambda}&-\alpha\phi_1)-\left(\lambda(u_{\lambda}-\alpha\phi_1)^q+(u_{\lambda}-\alpha\phi_1)^r\right)\\
&=\lambda u_{\lambda}^q+u_{\lambda}^r-\alpha\nu_1\phi_1-\alpha\left(\lambda q u_{\lambda}^{q-1}+r u_{\lambda}^{r-1}\right)\phi_1-\lambda(u_{\lambda}-\alpha\phi_1)^q-(u_{\lambda}-\alpha\phi_1)^r\\
&\geq u_{\lambda}^r-\alpha\nu_1\phi_1-\alpha r u_{\lambda}^{r-1}\phi_1-(u_{\lambda}-\alpha\phi_1)^r\\
&=-\alpha\nu_1\phi_1+o(\alpha\phi_1).
\end{split}
\end{equation*}
Using that $\nu_1<0$, $\phi_1>0$, for $\alpha>0$ sufficiently small we have that
$$
(-\Delta)^s(u_{\lambda}-\alpha\phi_1)-\left(\lambda(u_{\lambda}-\alpha\phi_1)^q+(u_{\lambda}-\alpha\phi_1)^r\right)\ge 0,
$$
proving that $u_{\lambda}-\alpha\phi_1$ is a supersolution.

Now, let $\psi=\lambda^{\frac{1}{q-1}}v$, with $v$ a solution to
\begin{equation}\label{prob:sublinear}
        \left\{
        \begin{tabular}{rcl}
        $(-\Delta)^sv= v^q$ & &in $\Omega$, \\
        $B(v)=0\mkern+7.6mu $           & &on $\partial\Omega$.
        \end{tabular}
        \right.
\end{equation}
 Then $\psi\leq u_{\lambda}-\alpha\phi_1$ and problem \eqref{problema_abajo} has a solution $\widetilde{u}$ such that $\psi\leq \widetilde{u}\leq u_{\lambda}-\alpha\phi_1$ in contradiction with the minimality of $u_{\lambda}$.
\end{proof}

\begin{proof}[Proof of Lemma \ref{lem:solLambda}]
Let $\{\lambda_n\}$ be a sequence such that $\lambda_n \nearrow \Lambda$ and denote by $u_n=u_{\lambda_n}$ the minimal solution to problem $(P_{\lambda_n})$.
Let $U_n=E_s[u_n]$, then
\begin{equation*}
I_{\lambda_n}(u_n)=\frac{1}{2}\int_{{\Omega}}|(-\Delta)^{\frac{s}{2}}u_n|^2dx-\frac{\lambda_n}{q+1}\int_{\Omega} u_n^{q+1}dx-\frac{1}{r+1}\int_{\Omega}u_n^{r+1}dx.
\end{equation*}
Moreover, as $u_n$ is a solution to \eqref{problema_abajo}, it also satisfies
\begin{equation*}
\int_{{\Omega}}|(-\Delta)^{\frac{s}{2}}u_n|^2dx=\lambda_n\int_{\Omega} u_n^{q+1}dx+\int_{\Omega}u_n^{r+1}dx.
\end{equation*}
On the other hand, using \eqref{ineq:posit} with $v=u_n$,
\begin{equation*}
\int_{{\Omega}}|(-\Delta)^{\frac{s}{2}}u_n|^2dx-\lambda_nq\int_{\Omega} u_n^{q+1}dx-r\int_{\Omega}u_n^{r+1}dx\geq0.
\end{equation*}

As in \cite[Lemma 3.5]{ABC}, we  conclude $I_{\lambda_n}(u_n)<0$. Since $I_{\lambda_n}'(u_n) = 0$, plainly we obtain that
${\|u_n\|}_{H_{\Sigma_{\mathcal{D}}}^s(\Omega)}\leq C$. Hence, there exists a weakly convergent subsequence $u_n\to u\in
H_{\Sigma_{\mathcal{D}}}^s(\Omega)$ and, as a consequence, $u$ is a weak solution of \eqref{problema_abajo} for $\lambda=\Lambda$.
\end{proof}

Next we assure the existence of a second solution to \eqref{problema_abajo} for every $0<\lambda<\Lambda$ following the ideas of \cite{AL}, developed to concave-convex problems in \cite{ACP,BCdPS} for the classical Laplacian and the fractional Laplacian respectively. In order to find a second solution by means of variational methods it is essential to have a first solution which is also a local minimum of the associated functional $J_{\lambda}$.

\begin{lemma}\label{lem:segsol}
Problem $(P_{\lambda})$ has at least two solutions for each $\lambda \in (0,\Lambda)$.
\end{lemma}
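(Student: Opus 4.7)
The plan is to obtain a second solution via the Mountain Pass Theorem applied to a functional centered at the minimal solution $u_\lambda$ given by Lemma~\ref{lem:minimal}, following the strategy of \cite{AL,ACP,BCdPS}. Setting $u=u_\lambda+v$ reduces the search to a nontrivial nonnegative critical point of
\begin{equation*}
\tilde I_\lambda(v)=\tfrac12\int_\Omega |(-\Delta)^{s/2}v|^2\,dx-\int_\Omega G_\lambda(x,v)\,dx,\qquad v\in H^s_{\Sigma_{\mathcal D}}(\Omega),
\end{equation*}
where $G_\lambda(x,t)=\int_0^t g_\lambda(x,\tau)\,d\tau$ and $g_\lambda(x,t)=\lambda[(u_\lambda+t_+)^q-u_\lambda^q]+[(u_\lambda+t_+)^r-u_\lambda^r]$; the truncation $t_+$ guarantees that any nontrivial critical point is automatically nonnegative, so it corresponds to a genuine new solution of $(P_\lambda)$.

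The crux is to show that $v\equiv 0$ is a local minimum of $\tilde I_\lambda$ in the $H^s_{\Sigma_{\mathcal D}}(\Omega)$-topology. I would pick $\mu\in(\lambda,\Lambda)$, so that by Lemma~\ref{orden} one has $u_\lambda<u_\mu$ in $\Omega$, and consider the truncated nonlinearity obtained by freezing $\lambda t^q+t^r$ outside the interval $[u_\lambda,u_\mu]$. The associated functional is coercive and weakly lower semicontinuous on $H^s_{\Sigma_{\mathcal D}}(\Omega)$, so it admits a global minimizer lying in the order interval $[u_\lambda,u_\mu]$; by Lemma~\ref{orden} this minimizer must coincide with $u_\lambda$. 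Combined with the $C^\gamma$-regularity estimates of \cite{CCLO}, this shows that $u_\lambda$ is a local minimum of $I_\lambda$ in the $C(\overline\Omega)$-topology, and then a Brezis--Nirenberg type argument upgrades this to a local minimum in $H^s_{\Sigma_{\mathcal D}}(\Omega)$.

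If this local minimum is not strict, a sequence of distinct nontrivial critical points of $\tilde I_\lambda$ converging to zero already yields the second solution. Otherwise, since $r>1$, one has $\tilde I_\lambda(tw)\to -\infty$ as $t\to+\infty$ for any fixed nontrivial nonnegative $w\in H^s_{\Sigma_{\mathcal D}}(\Omega)$, which, combined with the strict local minimum at $0$, gives the Mountain Pass geometry. The Palais--Smale condition for $\tilde I_\lambda$ follows from Lemma~\ref{PS} thanks to the subcritical growth $r<(N+2s)/(N-2s)$, and the Ambrosetti--Rabinowitz theorem \cite{AR} produces a nontrivial critical point $\tilde v_\lambda\geq 0$, so that $u_\lambda+\tilde v_\lambda$ is a second solution distinct from $u_\lambda$. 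The main obstacle is precisely the passage from a local minimum in the uniform topology to a local minimum in $H^s_{\Sigma_{\mathcal D}}(\Omega)$: it requires a careful adaptation of the Brezis--Nirenberg technique to the mixed boundary fractional framework, exploiting the subcritical growth of the nonlinearity together with the H\"older estimates of \cite{CCLO} along an approximating sequence to extract uniform $C^\gamma$ bounds that upgrade the $H^s$-convergence to $C(\overline\Omega)$-convergence and produce the desired contradiction.
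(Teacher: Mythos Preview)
Your outline is essentially the approach the paper invokes (it defers entirely to \cite[Lemma~5.11]{BCdPS}, which carries out precisely this \cite{AL}-type program for the Dirichlet fractional problem), and the overall strategy is sound. Two small corrections on your invocations of Lemma~\ref{orden}: the comparison $u_\lambda<u_\mu$ does not follow from that lemma, since the concave--convex nonlinearity $\lambda t^q+t^r$ does \emph{not} satisfy the hypothesis that $f(t)/t$ is decreasing---the ordering comes instead from the monotonicity of minimal solutions in Lemma~\ref{lem:minimal} together with the strong maximum principle; and the global minimizer of the truncated functional need not coincide with $u_\lambda$ by any comparison argument, but if it does not you already have a second solution lying in the order interval $[u_\lambda,u_\mu]$, so the dichotomy still closes the proof.
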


\begin{proof}
The proof follows exactly as in \cite{BCdPS}, Lemma 5.11.
\end{proof}
\subsection{Moving the boundary conditions}

\

Now we prove Theorem \ref{teo:movbc}, i.e., the assertions on the behavior of the minimal and mountain pass solutions when we move the boundary conditions (see hypotheses $(B_1)$-$(B_3)$). To this aim, we need the following result.
\begin{lemma}
Let $v$ be the solution to problem \eqref{prob:sublinear}. There exists a constant $\beta>0$ such that
\begin{equation}\label{ineq:forbound}
\|\phi\|_{H_{\Sigma_{\mathcal{D}}}^s(\Omega)}^2-q\int_{\Omega}v^{q-1}\phi^2dx\geq\beta\|\phi\|_{L^2(\Omega)}^2,\quad\mbox{for all } \phi\in H_{\Sigma_{\mathcal{D}}}^s(\Omega).
\end{equation}
\end{lemma}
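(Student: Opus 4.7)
The plan is to deduce \eqref{ineq:forbound} from a Picone-type inequality applied to the unique positive solution $v$ of \eqref{prob:sublinear}. The crucial input is that $v \in L^{\infty}(\Omega)$ with $v > 0$ in $\Omega$: the $L^{\infty}$-bound follows from the subcriticality of the nonlinearity $v^q$ (with $0 < q < 1$) via the same blow-up arguments used to prove Theorem \ref{acotacion}, and positivity comes from the maximum principle for the extension problem.

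The core step is to establish the Picone-type estimate
\begin{equation*}
\|\phi\|_{H^s_{\Sigma_{\mathcal{D}}}(\Omega)}^2 \;\geq\; \int_{\Omega} v^{q-1}\phi^2\,dx, \qquad \forall\, \phi \in H^s_{\Sigma_{\mathcal{D}}}(\Omega).
\end{equation*}
To prove it I would move to the extension setting: set $V = E_s[v]$, $\Phi = E_s[\phi]$, and use the pointwise classical Picone identity for the (local) divergence operator with weight $y^{1-2s}$,
\begin{equation*}
y^{1-2s}|\nabla \Phi|^2 - y^{1-2s}\nabla V \cdot \nabla\!\left(\tfrac{\Phi^2}{V}\right) \;=\; y^{1-2s}\bigl|\nabla \Phi - (\Phi/V)\nabla V\bigr|^2 \;\geq\; 0,
\end{equation*}
which holds wherever $V > 0$. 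Integrating over $\mathcal{C}_{\Omega}$, multiplying by $\kappa_s$, and testing the extension equation for $V$ against $\Phi^2/V$ yields (using $B(V)=0$ to kill the lateral boundary contribution and the Caffarelli--Silvestre identity at $y=0$)
\begin{equation*}
\kappa_s\!\int_{\mathcal{C}_{\Omega}} y^{1-2s}\nabla V \cdot \nabla\!\left(\tfrac{\Phi^2}{V}\right) dxdy \;=\; \int_{\Omega} (-\Delta)^s v \cdot \tfrac{\phi^2}{v}\,dx \;=\; \int_{\Omega} v^{q-1}\phi^2\,dx,
\end{equation*}
where in the last equality I used $(-\Delta)^s v = v^q$.

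Combining this Picone-type estimate with the hypothesis $0<q<1$ gives
\begin{equation*}
\|\phi\|_{H^s_{\Sigma_{\mathcal{D}}}(\Omega)}^2 - q\!\int_{\Omega} v^{q-1}\phi^2\,dx \;\geq\; (1-q)\!\int_{\Omega} v^{q-1}\phi^2\,dx \;\geq\; (1-q)\,\|v\|_{L^{\infty}(\Omega)}^{q-1}\!\int_{\Omega}\phi^2\,dx,
\end{equation*}
where the second inequality uses $v^{q-1}\geq \|v\|_{L^{\infty}(\Omega)}^{q-1}$ pointwise (because $q-1<0$). This proves \eqref{ineq:forbound} with the explicit constant $\beta = (1-q)\,\|v\|_{L^{\infty}(\Omega)}^{q-1} > 0$.

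The main obstacle is the rigorous justification of using $\Phi^2/V$ as a test function in the weak formulation for $V$: this function need not belong to $H^1_{\Sigma^*_{\mathcal{D}}}(\mathcal{C}_{\Omega}, y^{1-2s}dxdy)$, since $V$ vanishes on $\Sigma^*_{\mathcal{D}}$ and one has to check compatibility at the interface $\Gamma^*$. The standard remedy is a truncation/approximation: replace $V$ by $V+\varepsilon$ and $\phi$ by a suitable cut-off $\phi_n \in C^1_c(\Omega \cup \Sigma_{\mathcal{N}})$ converging to $\phi$ in $H^s_{\Sigma_{\mathcal{D}}}(\Omega)$, then derive the inequality for the regularized functions (where Picone and the testing are unambiguous), and finally pass to the limit via monotone/dominated convergence on the right-hand side and Fatou on the left. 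Once this is done, the conclusion follows as above.
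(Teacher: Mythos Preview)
Your argument is correct, but it takes a genuinely different route from the paper's. The paper observes that $v$, being the unique positive solution of \eqref{prob:sublinear}, is the global minimizer of the functional $\phi\mapsto\|\phi\|_{H^s_{\Sigma_{\mathcal D}}(\Omega)}^2-\frac{1}{q+1}\|\phi\|_{L^{q+1}(\Omega)}^{q+1}$; non-negativity of the second variation at $v$ immediately gives $\|\phi\|_{H^s_{\Sigma_{\mathcal D}}}^2-q\int_\Omega v^{q-1}\phi^2\,dx\ge 0$, i.e.\ the first eigenvalue $\mu_1$ of the linearized operator $(-\Delta)^s-qv^{q-1}$ is non-negative. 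The case $\mu_1=0$ is then excluded by a two-line integration-by-parts trick: testing $(-\Delta)^s v=v^q$ against the first eigenfunction $\varphi_1$ and the eigenvalue equation $(-\Delta)^s\varphi_1=qv^{q-1}\varphi_1$ against $v$ forces $\int_\Omega v^q\varphi_1\,dx=q\int_\Omega v^q\varphi_1\,dx$, a contradiction since $0<q<1$. Hence $\beta=\mu_1>0$. This avoids both the Picone machinery and any $L^\infty$-information on $v$. Your approach, by contrast, yields the explicit constant $\beta=(1-q)\|v\|_{L^\infty(\Omega)}^{q-1}$, at the price of the (standard but technically heavier) approximation needed to legitimize $\Phi^2/V$ as a test function, and of the $L^\infty$-bound on $v$ (which, incidentally, follows more directly here by comparison with a multiple of the torsion function $g$ from Lemma~\ref{lem:Lambda} than by the blow-up method you cite). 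Both proofs are valid; the paper's is shorter and entirely variational, while yours is constructive and gives a computable $\beta$.
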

\begin{proof}
Since we always consider boundary conditions such that $|\Sigma_{\mathcal{D}}|=\alpha>0$, the function $v$ can be obtained as
\begin{equation*}
\min\left\{\|\phi\|_{H_{\Sigma_{\mathcal{D}}}^s(\Omega)}^2-\frac{1}{q+1}\|\phi\|_{L^{q+1}(\Omega)}^{q+1}:\phi\in H_{\Sigma_{\mathcal{D}}}^s(\Omega) \right\},
\end{equation*}
and thus,
\begin{equation*}
\|\phi\|_{H_{\Sigma_{\mathcal{D}}}^s(\Omega)}^2-q\int_{\Omega}v^{q-1}\phi^2dx\geq0,\quad\mbox{for all } \phi\in H_{\Sigma_{\mathcal{D}}}^s(\Omega).
\end{equation*}
As a consequence, the linearized problem
\begin{equation} \label{prob:qlinealiz}
        \left\{
        \begin{tabular}{rcl}
        $(-\Delta)^s\varphi-qv^{q-1}\varphi=\mu\varphi$ & &in $\Omega$, \\
        $B(\varphi)=0\mkern+8mu $           & &on $\partial\Omega$,
        \end{tabular}
        \right.
\end{equation}
has a non-negative first eigenvalue $\mu_1$. Let $\varphi_1$ be the first eigenfunction and assume $\mu_1=0$. Since $v$ is a solution to \eqref{prob:sublinear}, then
\begin{equation*}
q\int_{\Omega}v^q\varphi_1 dx=\int_{\Omega}v^q\varphi_1 dx.
\end{equation*}
which is a contradiction. Hence $\mu_1>0$.
\end{proof}

\begin{lemma}\label{lem:bounduniq}
There exists $A>0$ such that for all $\lambda\in(0,\Lambda)$ the problem \eqref{problema_abajo} has at most one solution satisfying $\|u\|_{L^{\infty}(\Omega)}<A$.
\end{lemma}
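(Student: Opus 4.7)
The plan is to use the coercivity estimate \eqref{ineq:forbound} together with a pointwise lower bound on any solution and the assumed smallness of $\|u\|_{L^\infty(\Omega)}$ to linearize the difference of two solutions and force it to vanish.

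First I would record the universal lower bound: every solution $u$ of $(P_\lambda)$ satisfies $u\ge \lambda^{1/(1-q)}v$ in $\Omega$, where $v$ solves \eqref{prob:sublinear}. This is exactly what was observed in the construction of minimal solutions (Lemma \ref{lem:minimal}): the function $\psi=\lambda^{1/(1-q)}v$ is a subsolution of \eqref{problema_abajo} since $(-\Delta)^s\psi=\lambda\psi^q\le\lambda\psi^q+\psi^r$, and by Lemma \ref{orden}-type comparison with $f(t)/t$ decreasing applied to $t\mapsto \lambda t^q$, $\psi$ lies below every solution.

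Next, let $u_1,u_2$ be two solutions of $(P_\lambda)$ with $\|u_i\|_{L^\infty(\Omega)}<A$ and set $w=u_1-u_2$. The Mean Value Theorem yields pointwise functions $\xi_1,\xi_2$ (each lying between $u_1$ and $u_2$) with
\begin{equation*}
(-\Delta)^s w=\bigl(\lambda q\,\xi_1^{q-1}+r\,\xi_2^{r-1}\bigr)w.
\end{equation*}
Testing with $w\in H^s_{\Sigma_{\mathcal D}}(\Omega)$ and using $\xi_1\ge\min(u_1,u_2)\ge\lambda^{1/(1-q)}v$ (so that $\lambda q\,\xi_1^{q-1}\le qv^{q-1}$, since $q-1<0$ and $(q-1)/(1-q)=-1$), together with $\xi_2\le A$ (so that $r\,\xi_2^{r-1}\le rA^{r-1}$), one obtains
\begin{equation*}
\|w\|_{H^s_{\Sigma_{\mathcal D}}(\Omega)}^2-q\int_\Omega v^{q-1}w^2\,dx\le rA^{r-1}\|w\|_{L^2(\Omega)}^2.
\end{equation*}

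Finally, applying \eqref{ineq:forbound} to $\phi=w$ on the left-hand side gives
\begin{equation*}
\beta\,\|w\|_{L^2(\Omega)}^2\le rA^{r-1}\,\|w\|_{L^2(\Omega)}^2.
\end{equation*}
Choosing $A:=(\beta/r)^{1/(r-1)}$ (independent of $\lambda$) forces $\|w\|_{L^2(\Omega)}=0$, hence $u_1=u_2$. The only delicate point—and the one that makes the argument $\lambda$-uniform—is the matching between the constants $\lambda$ coming from the term $\lambda q\xi_1^{q-1}$ and the one produced by the universal minorant $\lambda^{1/(1-q)}v$; the exponents cancel exactly, so $A$ depends only on $\beta$, $r$ and $\Omega$ (through $v$), not on $\lambda$.
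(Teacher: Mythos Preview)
Your argument is correct and essentially the same as the paper's: both use the lower bound $u\ge\lambda^{1/(1-q)}v$ to replace $\lambda q\,u^{q-1}$ by $q v^{q-1}$, bound the superlinear term by $rA^{r-1}$, test the difference equation with $w$, and conclude via \eqref{ineq:forbound}. The only cosmetic difference is that the paper writes the two solutions as $u_\lambda$ and $u_\lambda+w$ with $w\ge0$ (using minimality and concavity of $t\mapsto t^q$ in place of the Mean Value Theorem for that term), whereas you treat $u_1,u_2$ symmetrically via the MVT; your version is marginally cleaner since it does not invoke the minimal solution. One small point: with the choice $A=(\beta/r)^{1/(r-1)}$ you only get $\beta\le rA^{r-1}$, so to force $w=0$ you should either take $A$ strictly smaller or (equivalently) use that the hypothesis $\|u_i\|_{L^\infty}<A$ is strict.
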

\begin{proof}
Let $A>0$ such that $rA^{r-1}<\beta$, with $\beta$ given by \eqref{ineq:forbound}. Assumme by contradiction that there exists a second solution $u=u_{\lambda}+w$ of \eqref{problema_abajo} such that $\|u\|_{L^{\infty}(\Omega)}\leq A$. Since $u_{\lambda}$ is the minimal solution, $w\geq0$. Let $\zeta(x)=\lambda^{\frac{1}{1-q}}v(x)$ with $v$ the solution to \eqref{prob:sublinear}, so that $(-\Delta)^s\zeta=\lambda \zeta^q$. Moreover, $u_{\lambda}$ is also a supersolution of \eqref{prob:sublinear}, and hence, by Lemma \ref{orden}, $u_{\lambda}\geq \lambda^{\frac{1}{1-q}}v$. On the other hand, since $u=u_{\lambda}+w$ is a solution to \eqref{problema_abajo} we have
\begin{equation*}
(-\Delta)^s(u_{\lambda}+w)=\lambda(u_{\lambda}+w)^q+(u_{\lambda}+w)^r.
\end{equation*}
By concavity, $\lambda(u_{\lambda}+w)^q\leq\lambda u_{\lambda}^q+\lambda qu_{\lambda}^{q-1}w$ and hence
\begin{equation*}
(-\Delta)^sw\leq \lambda qu_{\lambda}^{q-1}w+(u_{\lambda}+w)^r-u_{\lambda}^r.
\end{equation*}
Furthermore, since $u_{\lambda}\geq \lambda^{\frac{1}{1-q}}v$, one also has $u_{\lambda}^{q-1}\leq \lambda^{-1}v^{q-1}$ and as we are assuming $\|u_\lambda\|_{L^{\infty}(\Omega)}\leq A$, we find
\begin{equation*}
\begin{split}
(-\Delta)^sw&\leq qv^{q-1}+(u_{\lambda}+w)^r-u_{\lambda}^r\\
&\leq qv^{q-1}+rA^{r-1}w.
\end{split}
\end{equation*}
Multiplying the above inequality by $w$ and using \eqref{ineq:forbound} we conclude
\begin{equation*}
\beta\int_{\Omega}w^2dx\leq rA^{r-1}\int_{\Omega}w^2dx.
\end{equation*}
Since $\beta<rA^{r-1}$, it follows $w=0$.
\end{proof}

Now we can perform the proof of Theorem~\ref{teo:movbc}.

\begin{proof}[Proof of Theorem~\ref{teo:movbc}]
First we claim that if $A=A(\alpha)$ is the associated constant to \eqref{pal} obtained in Lemma \ref{lem:bounduniq}, then $A(\alpha)\to0$ as $\alpha\to0$.

Indeed, it is enough to observe that
\begin{equation*}
0<\mu_1\leq\lambda_1^s(\alpha)=\inf_{\substack{u\in
H_{\Sigma_{\mathcal{D}}}^s(\Omega)\\ u\not \equiv
0}}\frac{\|u\|_{H_{\Sigma_{\mathcal{D}}}^s(\Omega)}^2}{\|u\|_{L^{2}(\Omega)}^2},
\end{equation*}
where $\mu_1$ is the first eigenvalue of the linearized eigenvalue problem \eqref{prob:qlinealiz}.

Since by Remark \ref{rem:sobconst} $\lambda_1^s(\alpha)$ as
$\alpha\searrow 0$, the result follows.

In particular we deduce:
\begin{enumerate}
\item From the proof of Lemma \ref{lem:Lambda}, we have
$c\Lambda^{\delta}(\alpha)<\lambda_1^s(\alpha)$ and arguing as
above $\Lambda(\alpha)\to0$ as $\alpha\to0$. \item There exist at
most one solution $u$ to \eqref{problema_abajo} with
$(\lambda,\|u\|_{\infty})\in(0,\Lambda(\alpha))\times(0,A(\alpha))$,
that is the minimal solution and, since $A(\alpha)\searrow 0$ as
$\alpha\to0$, the minimal solution converges to zero as $\alpha\searrow 0$.
\end{enumerate}

Now we prove that for $0<\lambda<\Lambda(\alpha)$ small enough, the solution to problem \eqref{pal} obtained by the Mountain Pass Theorem,  $u_{\alpha}$, satisfies
\begin{equation*}
\|u_{\alpha}\|_{H^{s}(\Omega)}\to0,\quad\mbox{as }\alpha\searrow 0.
\end{equation*}
The proof follows the lines of \cite[Lemma 5.12]{ColP}. Let us consider the funcional at $\lambda=0$
\begin{equation*}
\begin{aligned}
I_{0}(u_{\alpha})&=\frac{1}{2}\int_{{\Omega}} |(-\Delta)^{\frac s2}u_\alpha|^{2}dx -\frac{1}{r+1}\int_{\Omega}u_{\alpha}^{r+1}dx\\
&=\frac{1}{2}\|u_{\alpha}\|_{H_{\Sigma_{\mathcal{D}}}^s}^2-\frac{1}{r+1}\|u_{\alpha}\|_{L^{r+1}(\Omega)}^{r+1}\\
&\geq \frac{1}{2}\|u_{\alpha}\|_{H_{\Sigma_{\mathcal{D}}}^s}^2
-\frac{1}{r+1}|\Omega|^{1-\frac{r+1}{2_s^*}}\left(1+\frac{1}{\lambda_1^s(\alpha)}\right)^{\frac{r+1}{2}}\|u_{\alpha}\|_{H_{\Sigma_{\mathcal{D}}}^s}^{r+1}.
\end{aligned}
\end{equation*}
Let us define $g(t)=\frac12 t^2-c_2(r,|\Omega|)\lambda_{1}^{-s\frac{r+1}{2}}t^{r+1}$.
It is easy to see that if $t_{\alpha}$ is such that
$g'(t_{\alpha})=0$ then $t_{\alpha}\leq
c(r,|\Omega|)\lambda_{1}^{s\mu}(\alpha)$ with
$\mu=\frac{r+1}{2(r-1)}$, so that $t_{\alpha}\to0$ as
$\alpha\searrow  0$. Hence, the Mountain Pass solution converges to zero
as $\alpha\searrow 0$.
\end{proof}
\begin{remark}As a conclusion of the above arguments:
\begin{enumerate}
\item Both solutions, the minimal solution $u_{\lambda}$ and the mountain pass solution $u_{mp}$, converge to zero as $\alpha\searrow 0$.
\item If we set $\alpha\in I_{\varepsilon}=[\varepsilon,|\partial\Omega|]$ with $\varepsilon>0$, under hypotheses $(\mathfrak{B}_{\alpha})$ and
$(B_1)$-$(B_3)$, there exist $M_{\varepsilon},\,\Lambda_{\varepsilon}$ such that the family
$\mathcal{S}_{\varepsilon}\subset[0,\Lambda_{\varepsilon}]\times[0,M_{\varepsilon}]$.
\item To finish, it is interesting to point out Theorem 8 by
Denzler in \cite{De1}, where the author proved that
$$
\sup_{0<\alpha<|\partial\Omega|}\{\lambda_1(\alpha):
\alpha=|\Sigma_{\mathcal{D}}|\}=\lambda_1(|\partial \Omega|),
$$
which in particular proves that there are configurations about the
distribution of the manifolds $\Sigma_{\mathcal{D}}$ and
$\Sigma_{\mathcal{N}}$ on $\partial\Omega$ such that \cite[Lemma
4.1]{ColP} does not apply and hence $\lambda_1^s(\alpha)\not\to 0$
as $\alpha \searrow 0$. But this is not our case under hypotheses
$(\mathfrak{B}_{\alpha})$ and $(B_1)$-$(B_3)$, in which
\cite[Lemma 4.1]{ColP} applies proving that
$\lambda_1^s(\alpha)\to 0$ as $\alpha \searrow 0$.
\end{enumerate}
\end{remark}

\end{document}